\documentclass[oneside, book,11 pt]{amsart}
\usepackage{color}
\usepackage{cancel}
\usepackage{amsmath}
\usepackage{amssymb}
\usepackage[normalem]{ulem}
\usepackage{amsfonts}
\usepackage{bbm}
\usepackage{amsthm}
\usepackage{enumerate}
\usepackage[mathscr]{eucal}
\bibliographystyle{plain}
\usepackage{graphicx}
\usepackage{pstricks}
\usepackage[active]{srcltx}
\usepackage{fancyhdr}
\usepackage{verbatim}
\usepackage{fullpage}
\usepackage{hyperref}
\hypersetup{
colorlinks = true, %Colours links instead of ugly boxes
urlcolor = blue, %Colour for external hyperlinks
linkcolor = red, %Colour of internal links
citecolor = blue %Colour of citations
}
\usepackage{caption}
\usepackage{float}

\pagestyle{fancy}
\fancyhf{}
\fancyfoot[C]{\footnotesize \thepage\ }

%%%%%%%%%%%%%%%%%%%%%%%

\newtheorem{thm}{Theorem}[section]
\newtheorem*{thm*}{Theorem}
\newtheorem{lemma}[thm]{Lemma}
\newtheorem{corollary}[thm]{Corollary}
\newtheorem{prop}[thm]{Proposition}
\newtheorem*{prop*}{Proposition}

\numberwithin{equation}{section}

\title{Four-Cycle Free Graphs, Height Functions, the Pivot Property and Entropy Minimality}
\author{
Nishant Chandgotia
}\address{
Department of Mathematics\\ University of British Columbia,Canada}
\email {nishant@math.ubc.ca}
\subjclass[2010]{37B10,37D35}
\keywords{Homomorphism spaces, four-cycle free graphs, entropy minimality, pivot property, height functions, universal covers, local move connectedness, homomorphism reconfiguration problems}
\def\F{{\mathcal F}}
\def\A{{\mathcal A}}
\def\N{\mathbb N}

\newcommand{\Z}{\mathbb{Z}}
\def \B{\mathcal B}

\def \E{\mathbb E}
\def \R{\mathbb R}

\def \V{\mathcal V}
\def \E1{\mathcal E}

\def \L{\mathcal L}
\def \T{\mathcal T}
\def\p{\prime}
\def\t{\tilde}
\def \m{\vec}
\def \H{\mathcal H}
\def\Q{\mathbb Q}
\def \C{\mathcal C}
\def\mi{{\vec{i}}}
\def\mj{{\vec{j}}}
\begin{document}
\maketitle

\begin{abstract}
Fix $d\geq 2$. Given a finite undirected graph $\H$ without self-loops and multiple edges, consider the corresponding `vertex' shift, $Hom(\Z^d, \H)$ denoted by $X_\H$. In this paper we focus on $\H$ which is `four-cycle free'. The two main results of this paper are: $X_\H$ has the pivot property, meaning that for all distinct configurations $x,y\in X_\H$ which differ only at finitely many sites there is a sequence of configurations $x=x^1, x^2, \ldots, x^n=y\in X_{\H}$ for which the successive configurations $x^i, x^{i+1}$ differ exactly at a single site. Further if $\H$ is connected then $X_\H$ is entropy minimal, meaning that every shift space strictly contained in $X_\H$ has strictly smaller entropy. The proofs of these seemingly disparate statements are related by the use of the `lifts' of the configurations in $X_\H$ to the universal cover of $\H$ and the introduction of `height functions' in this context.
\end{abstract}

\section{Introduction}\label{section: Introduction}

By $\H$ we will always denote an undirected graph without multiple edges (and by abuse of notation also denote its set of vertices). In this paper we focus on $\H$ which is four-cycle free, that is, it is finite, it has no self-loops and the four-cycle, denoted by $C_4$ is not a subgraph of $\H$. Fix $d\geq 2$. The basic object of study is $X_\H$, the space of graph homomorphisms from $\Z^d$ to $\H$. Here by $\Z^d$ we will mean both the group and its standard Cayley graph.

Such a space of configurations $X_\H$, referred to as a hom-shift, can be obtained by forbidding certain patterns on edges of $\H^{\Z^d}$. If $\H$ is a finite graph then $X_\H$ is a nearest neighbour shift of finite type. In addition it is also `isotropic' and `symmetric', that is, given vertices $a, b \in \H$ if $a$ is not allowed to sit next to $b$ in $X_\H$ for some coordinate direction, then $a$ is not allowed to sit next to $b$ in all coordinate directions. Most of the concepts related to shift spaces are introduced in Section \ref{section: Hom-shifts}.

Related to a shift space $X$ is its topological entropy denoted by $h_{top}(X)$ which measures the growth rate of the number of patterns allowed in $X$ with the size of the underlying shape (usually rectangular). For a given shift space, its computation is a very difficult task (look for instance in \cite{pavlovhardsquare2012} and the references within). We will focus on a different aspect: as in \cite{covensmital}, a shift space $X$ is called entropy minimal if for all shift spaces $Y\subsetneq X$, $h_{top}(Y)<h_{top}(X)$. Thus if a shift space has zero entropy and is entropy minimal then it is a topologically minimal system.

For $d=1$, it is well known that all irreducible nearest neighbour shifts of finite type are entropy minimal \cite{LM}. However not much is known about it in higher dimensions: Shift spaces with a strong mixing property called uniform filling are entropy minimal \cite{Schraudner2010minimal}, however nearest neighbour shifts of finite type with weaker mixing properties like block-gluing may not be entropy minimal \cite{boyle2010multidimensional}. If $\H$ is a four-cycle free graph then $X_\H$ is not even block-gluing (we do not prove this but is implied by our results). There has been some recent work \cite{lightwoodschraudnerentropy} which describes some conditions which are equivalent to entropy minimality for shifts of finite type. Our first main result is Theorem \ref{theorem:four cycle free entropy minimal} which states that $X_\H$ is entropy minimal for all connected four-cycle free graphs $\H$. Our approach to this seemingly combinatorial question will be via thermodynamic formalism, using measures of maximal entropy, more generally `adapted' Markov random fields. They are defined and described in Section \ref{section:thermodynamic formalism}.

Our second main result is with regard to the pivot property (Section \ref{section: the pivot property}). A shift space $X$ is said to have the pivot property if for all distinct configurations $x,y\in X$ which differ at finitely many sites, there exists a sequence $x=x^1, x^2, \ldots, x^n=y\in X$ such that successive configurations $x^i, x^{i+1}$ differ exactly at a single site. We will prove that for four-cycle free graphs $\H$, $X_{\H}$ has the pivot property (Theorem \ref{theorem: pivot property for four cycle free}). Many properties similar to the pivot property have appeared in the literature (often by the name local-move connectedness) \cite{brightwell2000gibbs,dominolocal2010,ordentlich2014data,Shiffieldribbon2002}. For instance consider the following problem: Let $G$ be a finite undirected graph without multiple edges and self-loops. Given two graph homomorphisms $x,y$ from $G$ to $H$, can we find graph homomorphisms $x=x^1, x^2, \ldots, x^n=y$ from $G$ to $H$ such that successive configurations $x^i, x^{i+1}$ differ exactly at a single site? Such a problem is called a homomorphism reconfiguration problem. If $\H$ is four-cycle free it was recently shown in \cite{Marcinfourcyclefree2014} that the homomorphism reconfiguration problem is solvable in time polynomial in the size of the graph $\H$. Our interest in such problems comes from the connections between the pivot property and the study of Markov and Gibbs cocycles \cite{chandgotia2013Markov}.

A critical part of our proofs for both Theorem \ref{theorem:four cycle free entropy minimal} and \ref{theorem: pivot property for four cycle free} depends on the identification of the associated height functions (defined in Section \ref{Section:heights}). Some of these ideas come from \cite{chandgotia2013Markov}: Let $C_n$ denote the cycle with vertices $0, 1, \ldots, n-1$. If $n \neq 1, 2, 4$ then it was proven that $X_{C_n}$ has the pivot property. Further, Lemma 6.7 in \cite{chandgotia2013Markov} implied that it is entropy minimal as well. We give a brief description of the latter: Given a configuration $x\in X_{C_n}$ it was proved that there exists a corresponding height function $h_x \in X_{\Z}$ such that $h_x\mod n= x$. (for $n=3$ also look at \cite{schmidt_cohomology_SFT_1995}) Further given any ergodic measure $\mu$ (assuming `adaptedness' cf. Section \ref{section:thermodynamic formalism}) on $X_{C_n}$ it was shown that a well-defined notion of slope (bounded by $-1$ and $1$) exists which measures the average rate of the increase of the height for every direction. If the slope is maximal in any direction it was proven that $\mu$ is frozen and otherwise it was shown that it is fully supported; frozen meaning that if $x, y\in supp(\mu)$ differ only at finitely many sites then $x=y$. From standard results in thermodynamic formalism it follows that $X_{C_n}$ is entropy minimal.

These ideas do not immediately translate to four-cycle free graphs. For this we will develop a different notion of `heights': Fix a connected four-cycle free graph $\H$. For all $u \in \H$ let $D_\H(u)$ denote the ball (for the graph distance) of radius $1$ around $u$. As in algebraic topology, $C$ is a covering space of $\H$ if there is a graph homomorphism (called the covering map) $f: C\longrightarrow \H$ such that for all $u \in \H$, $f^{-1}(D_{\H}(u))$ is a disjoint union of a constant number of subgraphs of $C$ isomorphic to $D_{\H}(u)$ via $f$. Given a graph $\H$, its universal cover (denoted by $E_\H$) is the unique covering space of $\H$ which is a tree. (Section \ref{section:universal covers} and \cite{Angluin80}) Denote by $\pi:E_\H\longrightarrow \H$ the corresponding covering map. $\pi$ induces a map from $X_{E_\H}$ to $X_\H$ (also denoted by $\pi$). It is not difficult to see that $\pi(X_{E_\H})\subset X_\H$; we will prove that the map is surjective and that the preimage of a configuration in $X_\H$ is unique once we fix the `lift' in $X_{E_\H}$ at any vertex of $\Z^d$. Thus for every $x\in X_\H$ we can associate $\t x \in X_{E_\H}$ such that $\pi(\t x)= x$ and a `height' function $h_x: \Z^d\times \Z^d \longrightarrow \Z$ such that $h_x(\m i , \m j)$ is the graph distance between $\t x_\mi$ and $\t x_\mj$. From here on the steps for the proofs of Theorems \ref{theorem:four cycle free entropy minimal} and \ref{theorem: pivot property for four cycle free} are similar to the steps for the corresponding proofs in \cite{chandgotia2013Markov}, but the proofs for the individual steps are quite different because unlike in \cite{chandgotia2013Markov} the height functions are not additive but subadditive, meaning
$$h_x(\mi,\mj)\leq h_x(\mi, \m k)+h_x(\m k, \mj).$$

To streamline the proofs, we use graph folding \cite{nowwinkler}; much of this is discussed in Section \ref{section:Folding, Entropy Minimality and the Pivot Property}.

\section{Shifts of Finite Type and Hom-Shifts} \label{section: Hom-shifts}
In this paper $\H$ will always denote an undirected graph without multiple edges and single isolated vertices. For such a graph we will denote the adjacency relation by $\sim_{\H}$ and the set of vertices of $\H$ by $\H$ (abusing notation). We identify $\Z^d$ with the set of vertices of the Cayley graph with respect to the standard generators $\m e_1,\m e_2, \ldots, \m e_d$, that is, $\m{i}\sim_{\Z^d}\m{j}$ if and only if $\|\m{i}-\m{j}\|_1=1$ where $\|\cdot\|_1$ is the $l^1$ norm. We drop the subscript in $\sim_{\H}$ when $\H=\Z^d$. Let $D_n$ and $B_n$ denote the $\Z^d$-balls of radius $n$ around $\m{0}$ in the $l^1$ and the $l^\infty$ norm respectively. The graph $C_n$ will denote the $n$-cycle where the set of vertices is $\{0, 1, 2, \ldots, n-1\}$ and $i\sim_{C_n} j$ if and only if $i \equiv j \pm 1\!\!\mod n$. The graph $K_n$ will denote the complete graph with $n$ vertices where the set of vertices is $\{1, 2, \ldots, n\}$ and $i \sim_{K_n} j$ if and only if $i \neq j$.

Let $\A$ be a finite \emph{alphabet} (with the discrete topology) and $\A^{\Z^d}$ be given the product topology, making it compact. We will refer to elements $x\in \A^{\Z^d}$ as \emph{configurations}, denoting them by $(x_{\m{i}})_{\m{i}\in \Z^d}$ where $x_{\m{i}}$ is the value of $x$ at $\m{i}$. For all $\m{i}\in \Z^d$ the map $\sigma^{\m{i}}:\A^{\Z^d}\longrightarrow \A^{\Z^d}$ given by
$$(\sigma^{\m{i}}(x))_{\m{j}}:=x_{\m{i}+\m{j}}$$
is called the \emph{shift map} and defines a $\Z^d$-action on $\A^{\Z^d}$. Closed subsets of $\A^{\Z^d}$ which are invariant under the shift maps are called \emph{shift spaces}. A \emph{sliding block code} from a shift space $X$ to a shift space $Y$ is a continuous map $f: X\longrightarrow Y$ which commutes with the shifts, that is, $\sigma^{\mi}\circ f= f\circ \sigma^\mi$ for all $\mi\in \Z^d$. A surjective sliding block code is called a \emph{factor map} and a bijective sliding block code is called a \emph{conjugacy}. We note that a conjugacy defines an equivalence relation; in fact, it has a continuous inverse since it is a continuous bijection between compact sets.

There is an alternate description of shift spaces using forbidden patterns: A \emph{pattern} is an element of $\A^F$ for some finite set $F\subset \Z^d$. Given a pattern $a\in \A^F$, we will often denote both the pattern and its corresponding cylinder set by $[a]_F$ or by $[x]_F$ when $x|_F=a$. For any set $F\subset \Z^d$ and $v\in \A$, $x|_F=v$ will mean that $x_\mi=v$ for all $\mi\in F$. For a set of (forbidden) patterns $\F$ define
$$X_\F:=\{x\in \A^{\Z^d}\:\Big{|}\: \sigma^{\m{i}}(x)|_F\notin \F \text{ for all }F\subset \Z^d\text{ and }\mi \in \Z^d\}.$$

It can be proved that a subset $X\subset \A^{\Z^d}$ is a shift space if and only if there exists a set of forbidden patterns $\F$ such that $X_\F= X$. Note that given two distinct sets of patterns $\F_1, \F_2$ it is possible that $X_{\F_1}=X_{\F_2}$. A \emph{shift of finite type} is a shift space $X$ such that $X=X_{\F}$ for some finite set $\F$. A \emph{nearest neighbour shift of finite type} is a shift of finite type $X$ such that $X=X_{\F}$ for some set $\F$ consisting of patterns on single edges and vertices of $\Z^d$. It follows from a simple recoding argument that any shift of finite type is conjugate to a nearest neighbour shift of finite type \cite{schimdt_fund_cocycle_98}. In this paper, we will focus on a special class of nearest neighbour shifts of finite type where the forbidden patterns are the same in every `direction':

Given a graph $\H$ let
$$X_{\H}:=\{x\in\H^{\Z^d}\:|\: x_{\m {i}}\sim_\H x_{\m{j}}\text{ for all }\m{i}\sim \m{j}\}.$$
Such spaces will be called \emph{hom-shifts}. Note that $X_\H$ is the space of graph homomorphisms from $\Z^d$ to $\H$. If $\H$ is finite and
$$\F_\H:=\{[v,w]_{\m{0},\m{e}_j}\:|\:v\nsim_\H w, 1\leq j\leq d\}$$
then $X_{\H}= X_{\F_{\H}}$. These are exactly the nearest neighbour shifts of finite type with symmetric and isotropic constraints. For example if the graph $\H$ is given by Figure \ref{Figure: Hard Square} then $X_\H$ is the hard square shift, that is, configurations with alphabet $\{0,1\}$ such that adjacent symbols cannot both be $1$. $X_{K_n}$ is the space of $n$-colourings of the graph, that is, configurations with alphabet $\{1,2, \ldots, n\}$ where all adjacent colours are distinct. We note that the properties, symmetry and isotropy, are not invariant under conjugacy.

$\F$ will always denote a set of patterns and $\H$ will always denote a graph, there will not be any ambiguity in the notations $X_\F, X_\H$.

A graph $\H$ is called \emph{four-cycle free} if it is finite, it has no self-loops and $C_4$ is not a subgraph of $\H$. For instance $K_4$ is not a four-cycle free graph.
\begin{figure}[h]
\centering
\includegraphics[angle=0,
width=.1\textwidth]{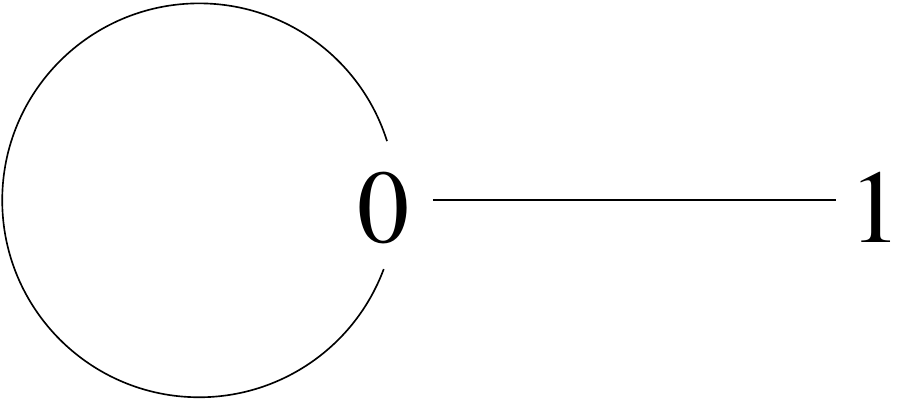}\caption{Graph for the Hard Square Shift}
\label{Figure: Hard Square}
\end{figure}

The set of \emph{globally allowed patterns} of a shift space $X$ on a set $A\subset \Z^d$ is
$$\L_A(X):=\{a\in \A^{A}\:\big{|}\:\text{ there exists }x\in X\text{ such that }x|_A=a\}.$$
Its \emph{language} is the set of all finite patterns appearing in $X$ that is
$$\L(X):=\bigcup_{A\subset \Z^d \text{ finite}}\L_A(X).$$
We comment that this is different from the set of locally allowed patterns: Let $X$ be a shift space with a forbidden list $\F$. Given a finite set $A$, a pattern $a\in \A^A$ is said to be \emph{locally allowed} if no pattern from $\F$ appears in $a$. In general it is undecidable for shifts of finite type whether a locally allowed pattern belongs to $\L(X)$ \cite{Robinson1971}; however it is decidable when $X$ is a hom-shift where it is sufficient to check whether the pattern extends to a locally allowed pattern on $B_n$ for some $n$.

The topological entropy of the shift space $X$ is the log growth rate of the number of allowed patterns in $X$, that is,
$$h_{top}(X):=\lim_{n\longrightarrow \infty}\frac{\log|\L_{B_n}(X)|}{|B_n|}.$$
The existence of the limit follows from subadditivity arguments via the well-known multivariate version of Fekete's Lemma \cite{silviofekete}. Moreover the topological entropy is an invariant under conjugacy (for $d=1$ look at Proposition 4.1.9 in \cite{LM}, the proof extends to higher dimensions). We remark that the computation of this invariant for shifts of finite type in $d>1$ is a hard problem and very little is known \cite{pavlovhardsquare2012}, however there are algorithms to compute approximating upper and lower bounds of the topological entropy of the hom-shifts \cite{symmtricfriedlan1997,louidor2010improved}. Further if $\H$ is a finite connected graph with at least two edges, then $h_{top}(X_\H)>0$:

\begin{prop}\label{proposition: hom-space positive entropy}
Let $\H$ be a finite graph with distinct vertices $a, b$ and $c$ such that $a\sim_\H b$ and $b\sim_\H c$. Then $h_{top}(X_{\H})\geq\frac{\log{2}}{2}$.
\end{prop}
\begin{proof}

It is sufficient to see this for a graph $\H$ with exactly three vertices $a$, $b$ and $c$ such that $a\sim_\H b$ and $b\sim_\H c$. For such a graph any configuration in $X_\H$ is composed of $b$ on one partite class of $\Z^d$ and a free choice between $a$ and $c$ for vertices on the other partite class. Then
$$|\L_{B_n}(X_\H)|=2^{{\lfloor\frac{(2n+1)^d}{2}\rfloor}}+2^{{\lceil\frac{(2n+1)^d}{2}\rceil}}$$
proving that $h_{top}(X_{\H})=\frac{\log{2}}{2}$.
\end{proof}

A shift space $X$ is called \emph{entropy minimal} if for all shift spaces $Y\subsetneq X$, $h_{top}(X)>h_{top}(Y)$. In other words, a shift space $X$ is entropy minimal if forbidding any word causes a drop in entropy. From \cite{quastrow2000} we know that every shift space contains an entropy minimal shift space with the same entropy and also a characterisation of same entropy factor maps on entropy minimal shifts of finite type.

One of the main results of this paper is the following:
\begin{thm}\label{theorem:four cycle free entropy minimal}
Let $\H$ be a connected four-cycle free graph. Then $X_\H$ is entropy minimal.
\end{thm}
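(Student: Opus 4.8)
The plan is to prove entropy minimality through the thermodynamic-formalism route sketched in the introduction, reducing it to a statement about measures of maximal entropy. Recall the standard equivalence: a shift of finite type $X$ is entropy minimal if and only if every measure of maximal entropy is fully supported, i.e.\ $\mathrm{supp}(\mu)=X$. So it suffices to fix an ergodic measure of maximal entropy $\mu$ on $X_\H$ and show that $\mathrm{supp}(\mu)=X_\H$. Since every measure of maximal entropy decomposes into ergodic components of the same entropy, I may assume $\mu$ is ergodic.

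\emph{First}, I would lift the problem to the universal cover. Using the covering map $\pi:E_\H\longrightarrow \H$ and the lifting result promised in the introduction, each $x\in X_\H$ has a lift $\t x\in X_{E_\H}$, and I would introduce the height function $h_x(\mi,\mj)$ given by the graph distance between $\t x_\mi$ and $\t x_\mj$ in the tree $E_\H$. The crucial structural fact here is that, because $E_\H$ is a tree, distances behave rigidly: along any edge $\mi\sim\mj$ of $\Z^d$ the height increments $h_x(\m 0,\mj)-h_x(\m 0,\mi)$ are $\pm 1$, so the lift encodes a $\pm 1$ increment on every edge, exactly as in the $X_{C_n}$ case of \cite{chandgotia2013Markov}. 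For the ergodic measure $\mu$ I would define, via the ergodic theorem applied to these increments, a notion of \emph{slope} in each coordinate direction $\m e_k$, measuring the $\mu$-average rate of increase of the height. The subadditivity $h_x(\mi,\mj)\leq h_x(\mi,\m k)+h_x(\m k,\mj)$ replaces the additivity of the cocycle in the cyclic case, so the slope should be controlled by a subadditive (Kingman-type) ergodic argument rather than Birkhoff's theorem directly.

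\emph{Second}, I would run the dichotomy: either the slope is extremal (maximal, equal to the extreme value $\pm1$ allowed by the $\pm1$ increments) in some direction, or it is strictly interior in every direction. If the slope is extremal, the increments in that direction are $\mu$-almost surely constantly $+1$ (or $-1$), which forces the lift to be `frozen': configurations in $\mathrm{supp}(\mu)$ are rigid in that direction, so that $x,y\in\mathrm{supp}(\mu)$ differing at finitely many sites must coincide. A frozen measure has zero entropy, contradicting maximality since $h_{top}(X_\H)\geq\frac{\log 2}{2}>0$ by Proposition \ref{proposition: hom-space positive entropy} (using connectedness and the presence of a path $a\sim b\sim c$, which a connected four-cycle free graph with at least one edge must contain). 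So the extremal case cannot occur for a measure of maximal entropy. In the remaining interior-slope case, I would show $\mu$ is fully supported: the non-extremal slope gives enough `room' in the height function to locally modify configurations, and I would combine this with the folding machinery of Section \ref{section:Folding, Entropy Minimality and the Pivot Property} to propagate local freedom and conclude that every locally allowed pattern on a box lies in $\mathrm{supp}(\mu)$, hence $\mathrm{supp}(\mu)=X_\H$.

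The main obstacle I expect is the subadditivity of the height function: in \cite{chandgotia2013Markov} the cyclic height was an additive cocycle, so slope was an honest Birkhoff average and the frozen/fully-supported dichotomy followed cleanly. Here the distance-in-a-tree height is only subadditive, so the very definition of slope requires a subadditive ergodic theorem, and one must argue that the subadditivity defect is negligible $\mu$-almost everywhere — equivalently that the tree-distances along typical paths grow at a well-defined linear rate with no systematic cancellation. Establishing that the extremal-slope case genuinely forces freezing (rather than merely constraining a single direction) and that the interior case genuinely yields full support will be the technically delicate heart of the argument, and I expect folding to be the tool that tames the geometry of $E_\H$ enough to make these steps go through.
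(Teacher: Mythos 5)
Your proposal is correct and follows essentially the same route as the paper: reduce entropy minimality to full support of (ergodic, adapted, positive-entropy) measures of maximal entropy via Proposition \ref{proposition:entropyviamme}, the Lanford--Ruelle theorem and ergodic decomposition, then lift to the universal cover, define slopes of the subadditive height function via the subadditive ergodic theorem, and run the dichotomy between maximal and submaximal slope. The two ``technically delicate'' steps you defer are exactly the paper's Lemma \ref{lemma:slope 1 is frozen} (maximal slope forces $h_x(\m i,\m i+k\m e_1)=k$ a.e., and uniqueness of geodesics in the tree $E_\H$ then freezes the measure and kills its entropy) and Lemma \ref{lemma:patching_various_parts} (Egoroff plus compactness of the $\ell^1$-sphere produce configurations of submaximal range on large boundary spheres, into which any pattern is patched using the tree-folding machinery of Proposition \ref{proposition:folding trees into other trees} and the onion-peeling maps).
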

For $d=1$ all irreducible shifts of finite type are entropy minimal \cite{LM}. A necessary condition for the entropy minimality of $X_\H$ is that $\H$ has to be connected.
\begin{prop}\label{proposition:entropy requires connectivity}
Suppose $\H$ is a finite graph with connected components $\H_1, \H_2, \ldots \H_r$. Then $h_{top}(X_\H)=\max_{1\leq i \leq r}h_{top}(X_{\H_i})$.
\end{prop}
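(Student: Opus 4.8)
The plan is to exploit the connectedness of $\Z^d$ to split $X_\H$ into a disjoint union of the hom-shifts over the individual components, and then reduce the entropy computation to an elementary counting estimate.

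First I would observe that every $x \in X_\H$ takes values in a single connected component of $\H$. Indeed, if $\m i \sim \m j$ then $x_{\m i}\sim_\H x_{\m j}$, so $x_{\m i}$ and $x_{\m j}$ lie in the same component of $\H$; since the Cayley graph of $\Z^d$ is connected, iterating this along a path from $\m 0$ to an arbitrary $\m j$ shows that $x_{\m j}$ lies in the component of $x_{\m 0}$. Consequently $X_\H = \bigsqcup_{i=1}^r X_{\H_i}$, where each $X_{\H_i}$ is a shift space and the union is disjoint because distinct components share no vertices.

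Next I would transfer this decomposition to the level of patterns. Since $B_n$ is a connected subgraph of $\Z^d$, the same reasoning applied to a configuration realising a given pattern shows that every $a \in \L_{B_n}(X_\H)$ takes values in exactly one component; hence $\L_{B_n}(X_\H) = \bigsqcup_{i=1}^r \L_{B_n}(X_{\H_i})$ is a disjoint union, and therefore
$$|\L_{B_n}(X_\H)| = \sum_{i=1}^r |\L_{B_n}(X_{\H_i})|.$$
I would then pass to entropies using the elementary sandwich
$$\max_{1\leq i \leq r}|\L_{B_n}(X_{\H_i})| \;\leq\; \sum_{i=1}^r |\L_{B_n}(X_{\H_i})| \;\leq\; r\max_{1\leq i \leq r}|\L_{B_n}(X_{\H_i})|.$$
Taking logarithms, dividing by $|B_n|$, and letting $n \to \infty$, the factor $r$ contributes $\log r / |B_n| \to 0$, so that $h_{top}(X_\H) = \lim_n \max_i \frac{\log |\L_{B_n}(X_{\H_i})|}{|B_n|}$. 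Because there are only finitely many components and each sequence $\frac{\log |\L_{B_n}(X_{\H_i})|}{|B_n|}$ converges to $h_{top}(X_{\H_i})$, the limit of the maximum equals the maximum of the limits, yielding $h_{top}(X_\H) = \max_{1\leq i \leq r} h_{top}(X_{\H_i})$.

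The argument is essentially routine; the only point that demands a little care is the interchange of the limit with the finite maximum in the last step, which is justified by the convergence of each individual sequence together with the finiteness of $r$ (one inequality, $\max_i h_{top}(X_{\H_i}) \leq h_{top}(X_\H)$, also follows for free from monotonicity of entropy under the inclusions $X_{\H_i}\subseteq X_\H$).
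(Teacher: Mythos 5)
Your proof is correct and follows essentially the same route as the paper: the decomposition of $\L_{B_n}(X_\H)$ into the disjoint union of the $\L_{B_n}(X_{\H_i})$ and the resulting sandwich $\max_i|\L_{B_n}(X_{\H_i})|\leq|\L_{B_n}(X_\H)|\leq r\max_i|\L_{B_n}(X_{\H_i})|$ is exactly the observation the paper gives. You simply spell out the details (connectedness of $B_n$, the limit/max interchange) that the paper leaves implicit.
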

This follows from the observation that
$$\max_{1\leq i \leq r}|\L_{B_n}({X_{\H_i}})|\leq |\L_{B_n}({X_\H})|= \sum_{i=1}^r|\L_{B_n}({X_{\H_i}})|\leq r\max_{1\leq i \leq r}|\L_{B_n}({X_{\H_i}})|.$$

\section{Thermodynamic Formalism}\label{section:thermodynamic formalism}
Here we give a brief introduction of thermodynamic formalism. For more details one can refer to \cite{Rue,walters-book}.

By $\mu$ we will always mean a shift-invariant Borel probability measure on a shift space $X$. The \emph{support} of $\mu$ denoted by $supp(\mu)$ is the intersection of all closed sets $Y \subset X$ for which $\mu(Y)= 1$. Note that $supp(\mu)$ is a shift space as well.
The \emph{measure theoretic entropy} is
\begin{equation*}
h_\mu:=\lim_{i \rightarrow \infty}\frac{1}{|D_i|}H^{D_i}_{\mu},
\end{equation*}
\noindent where $H^{D_i}_{\mu}$ is the Shannon-entropy of $\mu$ with respect to the partition of $X$ generated by the cylinder sets on $D_i$, the definition of which is given by:
\begin{equation*}
H^{D_i}_{\mu}:=\sum_{a\in \L_{D_i}(X)}-\mu([a]_{D_i})\log{\mu([a]_{D_i})},
\end{equation*} with the understanding that $0\log 0=0$.

A shift-invariant probability measure $\mu$ is a \emph{measure of maximal entropy} of $X$ if the maximum of $\nu \mapsto h_\nu$ over all shift-invariant probability measures on $X$ is obtained at $\mu$. The existence of measures of maximal entropy follows from upper-semi-continuity of the function $\nu \mapsto h_\nu$ with respect to the weak-$*$ topology.

Further the well-known \emph{variational principle} for topological entropy of $\Z^d$-actions asserts that if $\mu$ is a measure of maximal entropy $h_{top}(X)=h_\mu$ whenever $X$ is a $\Z^d$-shift space.

The following is a well-known characterisation of entropy minimality (it is used for instance in the proof of Theorem 4.1 in \cite{meestersteif2001}):
\begin{prop}
\label{proposition:entropyviamme}
A shift space $X$ is entropy minimal if and only if every measure of maximal entropy for $X$ is fully supported.
\end{prop}
We understand this by the following: Suppose $X$ is entropy minimal and $\mu$ is a measure of maximal entropy for $X$. Then by the variational principle for $X$ and $supp(\mu)$ we get
$$h_{top}(X)=h_\mu\leq h_{top}(supp(\mu))\leq h_{top}(X)$$
proving that $supp(\mu)=X$. To prove the converse, suppose for contradiction that $X$ is not entropy minimal and consider $Y\subsetneq X$ such that $h_{top}(X)= h_{top}(Y)$. Then by the variational principle there exists a measure $\mu$ on $Y$ such that $h_\mu= h_{top}(X)$. Thus $\mu$ is a measure of maximal entropy for $X$ which is not fully supported.

Further is known if $X$ is a nearest neighbour shift of finite type; this brings us to Markov random fields which we introduce next.

Given a set $A\subset \Z^d$ we denote the \emph{$r$-boundary} of $A$ by $\partial_r A$, that is,
$$\partial_r A=\{w\in \Z^d\setminus A\:\Big \vert\: \|w-v\|_1\leq r \text{ for some }v\in A\}.$$
The \emph{1-boundary} will be referred to as the \emph{boundary} and denoted by $\partial A$.
A \emph{Markov random field} on $\mathcal{A}^{\Z^d}$ is a Borel probability measure $\mu$ with the property that
for all finite $A, B \subset \Z^d$ such that $\partial A \subset B \subset A^{c}$ and $a \in \A^A, b \in \A^B$ satisfying $\mu([b]_B)>0$
\begin{equation*}
\mu([a]_A\;\Big\vert\;[b]_B)= \mu([a]_A\;\Big\vert\;[b]_{ \partial A}).
\end{equation*}

In general Markov random fields are defined over graphs much more general than $\Z^d$, however we restrict to the $\Z^d$ setting in this paper.

A \emph{uniform Markov random field} is a Markov random field $\mu$ such that further

\begin{equation*}
\mu([a]_A\;\Big\vert\;[b]_{ \partial A})=\frac{1}{n_{A,b|_{\partial A}}}
\end{equation*}
where $n_{A,b|_{\partial A}}=|\{a\in \A^A\:|\: \mu([a]_A\cap [b]_{\partial A})>0\}|$.

Following \cite{petersen_schmidt1997, schmidt_invaraint_cocycles_1997}, we denote by $\Delta_X$ the \emph{homoclinic equivalence relation} of a shift space $X$, which is given by
\begin{equation*}
\Delta_X := \{(x,y)\in X\times X\;|\; x_{\m i}=y_{\m i} \text{ for all but finitely many } \m i\in \Z^d\}.
\end{equation*}

We say that a measure $\mu$ is \emph{adapted} with respect to a shift space $X$ if
$supp(\mu)\subset X$ and
\begin{equation*}
x\in supp(\mu) \Longrightarrow \{y\in X\:|\: (x,y)\in \Delta_X\}\subset supp(\mu).
\end{equation*}

To illustrate this definition, let $X\subset \{0,1\}^{\Z}$ consist of configurations in $X$ in which at most a single $1$ appears. $X$ is uniquely ergodic; the delta-measure $\delta_{0^{\infty}}$ is the only shift-invariant measure on $X$. But
$$supp(\delta_{0^\infty})= \{0^\infty\}\subsetneq \{y\in X\;|\; 0^\infty_i= y_i \text{ for all but finitely many } i\in \Z\}=X,$$
proving that it is not adapted. On the other hand, since the homoclinic relation of $\A^{\Z^d}$ is minimal, meaning that for all $x\in \A^{\Z^d}$
$$\overline{\{y\in \A^{\Z^d}\;|\; y_{\m i}=x_{\m i} \text{ for all but finitely many } \mi\in \Z^d\}}= \A^{\Z^d},$$
it follows that a probability measure on $\A^{\Z^d}$ is adapted if and only if it is fully supported.

The relationship between measures of maximal entropy and Markov random fields is established by the following theorem. This is a special case of the Lanford-Ruelle theorem \cite{lanfruell,Rue}.

\begin{thm} All measures of maximal entropy on a nearest neighbour shift of finite type $X$ are shift-invariant uniform Markov random fields $\mu$ adapted to $X$.\label{thm:equiGibbs}
\end{thm}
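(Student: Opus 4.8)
The statement to prove is Theorem \ref{thm:equiGibbs}, the Lanford–Ruelle theorem specialized to nearest neighbour shifts of finite type: every measure of maximal entropy is a shift-invariant uniform Markov random field adapted to $X$.

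The plan is to establish the three assertions—Markov, uniform, and adapted—separately, with the variational characterization of equilibrium states doing the heavy lifting for the first two and a direct support argument handling the third.

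\medskip

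First I would reduce to the abstract Lanford–Ruelle framework. Since $X$ is a nearest neighbour shift of finite type, the constraint is specified by an interaction supported on single edges and vertices. A measure of maximal entropy is exactly an equilibrium state for the zero potential $\phi\equiv 0$ on $X$, and the classical Lanford–Ruelle theorem asserts that any such equilibrium state is a Gibbs measure (a DLR measure) for the corresponding interaction. So the first step is to invoke this: every measure $\mu$ of maximal entropy satisfies the DLR equations, i.e. for each finite $A$ the conditional distribution $\mu(\cdot\mid \mathcal F_{A^c})$ is given $\mu$-a.s. by the Gibbsian specification determined by the nearest neighbour interaction.

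\medskip

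Second, I would translate the DLR property into the stated Markov and uniform Markov conditions. Because the interaction is nearest neighbour (it only couples a site to its neighbours), the Gibbsian conditional distribution on a finite set $A$ depends on the exterior configuration only through its restriction to the boundary $\partial A$; this is precisely the Markov random field property. Moreover, since the potential is identically zero, the Gibbs weight of every locally admissible configuration on $A$ compatible with a fixed boundary condition is the same constant, so the conditional measure is uniform over the admissible configurations: this yields
\begin{equation*}
\mu([a]_A\mid[b]_{\partial A})=\frac{1}{n_{A,b|_{\partial A}}}.
\end{equation*}
The main obstacle here is bookkeeping: one must verify that the set of $a$ receiving positive conditional mass is exactly the set of $a$ that are globally (not merely locally) compatible with $b$, and that these two notions coincide $\mu$-a.s. for a measure supported in $X$—this is where the shift-of-finite-type hypothesis and the definition of $n_{A,b|_{\partial A}}$ must be matched carefully.

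\medskip

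Third, shift-invariance is immediate since any measure of maximal entropy may be taken shift-invariant by the variational principle (and the problem already restricts attention to shift-invariant $\mu$). Finally, for adaptedness I would argue directly from the uniform Markov property: if $x\in supp(\mu)$ and $y\in X$ differs from $x$ at only finitely many sites, enclose the difference in a finite set $A$; since $x$ and $y$ agree on $\partial A$ and both restrict to globally admissible patterns, the uniform conditional distribution assigns positive mass to $y|_A$ given the shared boundary, forcing $y\in supp(\mu)$. I expect the subtlest point overall to be this identification of the support with the locus of positive uniform conditional weights—ensuring that ``locally admissible relative to the boundary'' and ``in the support'' are reconciled—rather than the invocation of Lanford–Ruelle itself, which we take as given.
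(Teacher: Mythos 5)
The paper offers no proof of this theorem: it is stated with a citation as a special case of the Lanford--Ruelle theorem, which is exactly the route you take, so your proposal matches the paper's (implicit) treatment. Your outline is correct, with the one caveat you already flag at the end: adaptedness must be deduced from the Gibbs/DLR form of the conditionals --- uniform over \emph{all locally admissible} completions of the boundary pattern --- rather than from the paper's stated definition of a uniform Markov random field (uniform over completions of \emph{positive measure}), since arguing from the latter alone would make the support argument circular.
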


The converse is also true under further mixing assumptions on the shift space $X$ (called the D-condition). The full strength of these statements is obtained by looking at \emph{equilibrium states} instead of measures of maximal entropy. The measures obtained there are not uniform Markov random fields, rather Markov random fields where the conditional probabilities are weighted via an \emph{interaction} giving rise to \emph{Gibbs states}. Uniform Markov random fields are Gibbs states with interaction zero.

We will often restrict our proofs to the ergodic case. We can do so via the following standard facts implied by Theorem $14.15$ in \cite{Georgii} and Theorem 4.3.7 in \cite{kellerequ1998}:

\begin{thm}\label{theorem: ergodic decomposition of markov random fields}
Let $\mu$ be a shift-invariant uniform Markov random field adapted to a shift space $X$. Let its ergodic decomposition be given by a measurable map $x\longrightarrow \mu_x$ on $X$, that is, $\mu= \int_X \mu_x d\mu$. Then $\mu$-almost everywhere the measures $\mu_x$ are shift-invariant uniform Markov random fields adapted to $X$ such that $supp(\mu_x)\subset supp(\mu)$. Moreover $\int h_{\mu_x} d\mu(x)= h_\mu$.
\end{thm}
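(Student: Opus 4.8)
The plan is to reduce everything to the general ergodic decomposition theorem for shift-invariant measures and then verify that the three defining properties—the uniform Markov property, adaptedness, and the support containment—are each preserved almost surely under the decomposition. Let me sketch the approach, reserving the adaptedness step as the anticipated main obstacle.

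First I would invoke the standard ergodic decomposition of a shift-invariant probability measure: since the shift action is a measurable $\Z^d$-action on the compact metric space $X$, every shift-invariant $\mu$ admits a decomposition $\mu = \int_X \mu_x\, d\mu(x)$ where $x \mapsto \mu_x$ is measurable and $\mu$-a.e. the $\mu_x$ are shift-invariant and ergodic; this is precisely what Theorem 14.15 of \cite{Georgii} supplies in the Gibbsian/MRF formulation. The entropy identity $\int h_{\mu_x}\, d\mu(x) = h_\mu$ is the affinity of the entropy functional $\nu \mapsto h_\nu$ over the ergodic decomposition, which is Theorem 4.3.7 in \cite{kellerequ1998}. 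These two cited results hand me the skeleton for free, so the work is entirely in checking that the structural properties descend to the components.

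Next I would verify the uniform Markov property for $\mu$-a.e.\ $\mu_x$. The cleanest route is to note that the Markov property and its uniform strengthening are expressible through countably many conditional-probability identities: for each finite $A$, each pattern $a \in \A^A$, and each boundary pattern $b \in \A^{\partial A}$, the uniformity condition asserts $\mu([a]_A \mid [b]_{\partial A}) = 1/n_{A,b|_{\partial A}}$ on the conditioning event. Because there are only countably many such $(A,a,b)$ and each identity is a statement about conditional expectations that disintegrates over the ergodic decomposition, I would argue that off a single $\mu$-null set every component $\mu_x$ satisfies all of them simultaneously. The only subtlety is that the normalizing count $n_{A,b|_{\partial A}}$ is defined through the support of the measure, so I must show the relevant local support does not shrink when passing to $\mu_x$; this is where I would use $supp(\mu_x) \subset supp(\mu)$ together with the fact that adaptedness (established next) forces the local supports to be determined purely by the nearest-neighbour constraints of $X$ rather than by the particular measure, so the count is the same for $\mu$ and for each $\mu_x$.

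The hard part will be adaptedness, precisely because it is a \emph{global} statement about entire homoclinic classes rather than a local conditional identity, and global statements do not automatically survive ergodic decomposition. The containment $supp(\mu_x) \subset supp(\mu)$ is immediate since $\mu_x \ll \mu$ in the decomposition sense, but adaptedness requires that whenever $z \in supp(\mu_x)$ and $(z,y) \in \Delta_X$ then $y \in supp(\mu_x)$, and a priori a homoclinic move could carry a point out of the smaller support $supp(\mu_x)$ even though it stays inside $supp(\mu)$. My plan to overcome this is to exploit the uniform Markov property just established for $\mu_x$: adaptedness for a uniform MRF is equivalent to the statement that any single-site change respecting the constraints preserves positivity of the conditional probability, and I would show that this single-site reformulation is itself a local (hence decomposition-stable) property. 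Concretely, if $z \in supp(\mu_x)$ and $y$ differs from $z$ at one site $\m i$ with $y$ still in $X$, then by the uniform Markov property at $A = \{\m i\}$ the conditional probability $\mu_x([y_{\m i}]_{\{\m i\}} \mid [z]_{\partial\{\m i\}}) = 1/n_{\{\m i\}, z|_{\partial\{\m i\}}} > 0$, so $y \in supp(\mu_x)$; iterating along a finite path of single-site changes connecting any two homoclinic configurations (which exists inside $X$ because homoclinic points differ on a finite set and $X$ is a nearest-neighbour SFT) then yields the full homoclinic class. Thus adaptedness is bootstrapped from the already-verified uniform Markov property, sidestepping the global difficulty. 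With all three properties in hand a.e., the theorem follows.
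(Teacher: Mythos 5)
First, note that the paper does not actually prove this statement: it is quoted as a standard consequence of Theorem 14.15 in Georgii and Theorem 4.3.7 in Keller, so the comparison is between your sketch and what those references supply. Your outline has the right skeleton (ergodic decomposition plus affinity of entropy, then check the structural properties componentwise), but two steps contain genuine gaps. The first is the claim that each conditional-probability identity ``disintegrates over the ergodic decomposition.'' Conditional-probability identities and conditional-independence statements are \emph{not} automatically inherited by the components of a disintegration---a mixture can satisfy such an identity on average while no component does---and establishing that they do pass to the ergodic components is precisely the nontrivial content of the cited theorem of Georgii; the proof there uses that the shift-invariant $\sigma$-algebra is contained, modulo $\mu$-null sets, in the tail $\sigma$-algebra, so that conditioning on a distant boundary already determines which ergodic component one is in. Your sketch never engages with this, so the first step is an assertion rather than an argument. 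Compounding this, your treatment of the normalizing count $n_{A,b|_{\partial A}}$ is circular: you invoke adaptedness of $\mu_x$ (``established next'') to argue that the local support, hence the count, of $\mu_x$ agrees with that of $\mu$, and then use the resulting uniform Markov property of $\mu_x$ to prove adaptedness of $\mu_x$.

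The second gap is the single-site path argument for adaptedness. You assert that any two homoclinic configurations in a nearest-neighbour shift of finite type can be joined by a chain of single-site changes staying inside $X$. That is exactly the pivot property, which is one of the main theorems of this paper for four-cycle-free hom-shifts and which \emph{fails} in general: the paper exhibits $X_{K_5}$ for $d=2$ as a nearest-neighbour SFT without it. So the iteration you propose breaks down at the first configuration that cannot be reached by pivots. The repair is to apply the (uniform) Markov property to the entire finite set $F$ on which $z$ and $y$ differ in a single step: for $z\in supp(\mu_x)$ and $y\in X$ with $y=z$ off $F$, the pattern $y|_F$ is one of the locally allowed completions of $z|_{\partial F}$, and once one knows that the conditional distribution of $\mu_x$ on $F$ given $z|_{\partial F}$ is uniform over \emph{all} such completions, positivity of $\mu_x([y]_{F\cup\partial F})$ (and, by enlarging $F$, of every cylinder of $y$) follows, giving $y\in supp(\mu_x)$. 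The clean route is: adaptedness of $\mu$ identifies its specification as the one uniform over $X$-allowed patterns; Georgii's theorem says the ergodic components are Gibbs for that same specification; uniformity and adaptedness of the components are then both read off from that single fact, with no circularity.
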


We will prove the following:
\begin{thm}\label{theorem: MRF fully supported }
Let $\H$ be a connected four-cycle free graph. Then every ergodic probability measure adapted to $X_\H$ with positive entropy is fully supported.
\end{thm}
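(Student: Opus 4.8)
The plan is to study $\mu$ through the height function attached to the universal cover, and to extract from it a slope in each coordinate direction via the subadditive ergodic theorem. For $x\in X_\H$ let $\t x\in X_{E_\H}$ be its lift and $h_x(\mi,\mj)=d_{E_\H}(\t x_\mi,\t x_\mj)$ the associated height. Since $E_\H$ is a tree, hence bipartite, moving by one generator of $\Z^d$ changes the distance to a fixed vertex by exactly $\pm 1$, so the increments of $n\mapsto h_x(\m 0,n\m e_k)$ lie in $\{-1,+1\}$ and $0\le h_x(\m 0,n\m e_k)\le n$. The subadditivity $h_x(\m 0,(n+m)\m e_k)\le h_x(\m 0,n\m e_k)+h_{\sigma^{n\m e_k}x}(\m 0,m\m e_k)$ is exactly the hypothesis of Kingman's theorem for the transformation $\sigma^{\m e_k}$, so for each $1\le k\le d$ the slope
$$\alpha_k:=\lim_{n\to\infty}\tfrac1n\, h_x(\m 0,n\m e_k)$$
exists $\mu$-almost everywhere and, $\mu$ being ergodic, equals an almost-everywhere constant in $[0,1]$. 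The whole argument then splits according to whether some $\alpha_k$ is maximal.

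First I would dispose of the rigid extreme. Because the one-step increments take values in $\{-1,+1\}$ and average to $\alpha_k$, the event $\alpha_k=1$ forces every increment to equal $+1$ almost surely, i.e. $h_x(\m 0,n\m e_k)=n$ for all $n$; equivalently the lifted rays $n\mapsto\t x_{n\m e_k}$ are geodesics in $E_\H$. I expect to show that such maximal slope forces $h_\mu=0$, and this is where four-cycle-freeness enters decisively: in a four-cycle-free graph any two vertices have at most one common neighbour, so on a unit square spanned by $\m e_k$ and a transverse $\m e_j$ the four corner values are tightly constrained (distinct diagonal values force the off-diagonal pair to be their unique common neighbour). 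Combining this local constraint with the geodesic structure along $\m e_k$, I would propagate determinism across the lattice and show that a configuration in $supp(\mu)$ restricted to $B_n$ is determined by its values on a set of size $O(n^{d-1})$, whence $\log|\L_{B_n}(supp(\mu))|=o(|B_n|)$ and therefore $h_\mu\le h_{top}(supp(\mu))=0$, contradicting positivity of the entropy. Consequently $\alpha_k<1$ for every $k$.

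In the complementary regime, where $\alpha_k<1$ for all $k$, I would prove full support using adaptedness. Since $supp(\mu)$ is a closed, homoclinically invariant subset of $X_\H$ and every cylinder $[p]$ is open, it suffices to produce, for each globally allowed pattern $p$, a point of $supp(\mu)$ lying in $[p]$, for that already gives $\mu([p])>0$. Fixing a $\mu$-generic $x\in supp(\mu)$, the strict inequalities $\alpha_k<1$ (together with ergodicity) mean that the lift $\t x$ fails to move off to infinity at the maximal rate in any direction, and it is precisely this slack that powers the local-move-connectedness analysis underlying Theorem \ref{theorem: pivot property for four cycle free}: I would interpolate the pattern $p$ into $x$ inside a large box by a finite sequence of single-site changes, each keeping the configuration in $X_\H$. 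The endpoint $y$ is homoclinic to $x$ and lies in $X_\H$, so adaptedness places it in $supp(\mu)\cap[p]$; as $p$ was arbitrary, $supp(\mu)=X_\H$.

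The hardest and most novel step is the rigidity argument of the second paragraph, converting ``maximal slope'' into ``zero entropy.'' In the additive cycle setting of \cite{chandgotia2013Markov} this is immediate from a single signed cocycle, but here the height is only subadditive and carries no sign, so the determinism must be wrung out of the local four-cycle-free geometry and then shown to propagate coherently in all $d-1$ transverse directions to yield the $O(n^{d-1})$ boundary count. A secondary difficulty is guaranteeing that the reconfiguration of the third paragraph is never blocked and terminates; graph folding is the device I would use to control this bookkeeping, replacing $\H$ by a folded image on which the height geometry is simpler while entropy, adaptedness, and support are preserved.
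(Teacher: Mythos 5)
Your high-level architecture matches the paper's: slopes via Kingman, maximal slope forces zero entropy, submaximal slopes plus adaptedness force full support. But both halves contain genuine gaps. In the rigid case, the deduction ``$\alpha_k=1$ forces every increment to equal $+1$ almost surely'' is a non sequitur as stated: a $\pm1$-valued increment sequence can average to $1$ while still containing infinitely many $-1$ steps (density zero suffices), so for an individual $x$ the slope being $1$ does not give $h_x(\m 0,n\m e_k)=n$. The conclusion is true, but it must be wrung out of stationarity: since $h_x(\m 0,k\m e_k)\le k$ pointwise while Kingman/Birkhoff gives $\int \frac{1}{k}h_x(\m 0,k\m e_k)\,d\mu=1$, one gets $h_x(\m 0,k\m e_k)=k$ a.e.\ for every $k$ (the paper does exactly this). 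Your subsequent rigidity mechanism (unit-square constraints propagating determinism) is also not what does the work; the clean argument is that once $h_x(\mi,\mi+k\m e_1)=k$, the lifted walks along the $\m e_1$-direction between two boundary sites of $B_n$ are geodesics in the tree $E_\H$, and geodesics between fixed endpoints in a tree are unique, so a pattern on $B_n$ in $supp(\mu)$ is determined by its restriction to $\partial_2 B_n$ (after matching lifts via Corollary \ref{corollary:covering_space_lifting_homoclinic}). That yields both frozenness and the $O(n^{d-1})$ count in one stroke.

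The more serious gap is in the submaximal regime, which is the technical heart of the theorem and which you assert rather than prove. ``Interpolate the pattern $p$ into $x$ by single-site changes'' presupposes the existence of a configuration $z\in X_\H$ homoclinic to a generic $x\in supp(\mu)$ with $z|_{D_n}$ equal to the target pattern --- but that existence is precisely what must be established; the pivot property only connects homoclinic pairs that are already known to exist. The paper's Lemma \ref{lemma:patching_various_parts} supplies this via a concrete construction: (a) submaximal slopes in all coordinate directions, the inequality $sl_{\m v}\le\sum_k|v_k|\,sl_{\m e_k}$, compactness of the $\ell^1$-sphere, and Egoroff's theorem produce configurations $x^{(N)}\in supp(\mu)$ with $Range_{\partial D_{N-1}}(x^{(N)})\le 2N(1-\epsilon)+2$; (b) the image of the boundary then spans a small subtree of $E_\H$, which is folded down to a single edge (Proposition \ref{proposition:folding trees into other trees} and the outward fixing maps), making an annulus monochromatic; (c) the target pattern is periodized and inward-folded to another monochromatic annulus; (d) the two annuli are joined by a walk in $\H$ of controlled length. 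None of steps (a)--(d) appears in your proposal, and you also miss the parity obstruction: when $\H$ is bipartite you cannot in general place $p$ at the prescribed location, and the paper must translate by $\m e_1$ and invoke shift-invariance of $\mu$ to conclude $\mu([p])>0$. Without this construction the final paragraph of your argument does not go through.
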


This implies Theorem \ref{theorem:four cycle free entropy minimal} by the following: The Lanford-Ruelle theorem implies that every measure of maximal entropy on $X_\H$ is a uniform shift-invariant Markov random field adapted to $X_\H$. By Proposition \ref{proposition: hom-space positive entropy} and the variational principle we know that these measures have positive entropy. By Theorems \ref{theorem: ergodic decomposition of markov random fields} and \ref{theorem: MRF fully supported } they are fully supported. Finally by Proposition \ref{proposition:entropyviamme}, $X_\H$ is entropy minimal.

Alternatively, the conclusion of Theorem \ref{theorem: MRF fully supported } can be obtained via some strong mixing conditions on the shift space; we will describe one such assumption. A shift space $X$ is called \emph{strongly irreducible} if there exists $g>0$ such that for all $x, y \in X$ and $A, B\subset \Z^d$ satisfying $\min_{\m i \in A, \m j \in B}\|\m i - \m j\|_1\geq g$, there exists $z\in X$ such that $z|_{A}= x|_A$ and $z|_B= y|_B$. For such a space, the homoclinic relation is minimal implying the conclusion of Theorem \ref{theorem: MRF fully supported } and further, that every probability measure adapted to $X$ is fully supported. Note that this does not prove that $X$ is entropy minimal unless we assume that $X$ is a nearest neighbour shift of finite type. Such an argument is used in the proof of Lemma 4.1 in \cite{meestersteif2001} which implies that every strongly irreducible shift of finite type is entropy minimal. A more combinatorial approach was used in \cite{Schraudner2010minimal} to show that general shift spaces with a weaker mixing property called uniform filling are entropy minimal.

\section{The Pivot Property}\label{section: the pivot property}
A \emph{pivot} in a shift space $X$ is a pair of configurations $(x,y)\in X$ such that $x$ and $y$ differ exactly at a single site. A subshift $X$ is said to have \emph{the pivot property} if for all distinct $(x,y)\in \Delta_X$ there exists a finite sequence of configurations $x^{(1)}=x, x^{(2)},\ldots, x^{(k)}=y \in X$ such that each $(x^{(i)}, x^{(i+1)})$ is a pivot. In this case we say $x^{(1)}=x, x^{(2)},\ldots, x^{(k)}=y$ is a \emph{chain of pivots} from $x$ to $y$.
Here are some examples of subshifts which have the pivot property:

\begin{enumerate}
\item Any subshift with a trivial homoclinic relation, that is, the homoclinic classes are singletons.
\item Any subshift with a safe symbol\footnote{A shift space $X\subset \A^{\Z^d}$ has a \emph{safe symbol} $\star$ if for all $x\in X$ and $A\subset \Z^d$ the configuration $z\in \A^{\Z^d}$ given by
\begin{equation*}
z_{\m i}:=\begin{cases}
x_{\m i} &\text{ if } \m i \in A\\
\star &\text{ if } \m i \in A^c
\end{cases}•
\end{equation*}
is also an element of $X$.}.
\item The hom-shifts $X_{C_r}$. This was proved for $r\neq 4$ in \cite{chandgotia2013Markov}, the result for $r=4$ is a special case of Proposition \ref{proposition: frozenfoldpivot}.
\item $r$-colorings of $\mathbb{Z}^d$ with $r\geq 2d+2$. (It is well-known, look for instance in Subsection 3.2 of \cite{chandgotia2013Markov})
\item\label{item: pivot property list number 5} $X_\H$ when $\H$ is dismantlable. \cite{brightwell2000gibbs}
\end{enumerate}
We generalise the class of examples given by (\ref{item: pivot property list number 5}) in Proposition \ref{proposition: frozenfoldpivot}. It is not true that all hom-shifts have the pivot property.
\begin{figure}[h]
\centering
\includegraphics[angle=0,
width=.1\textwidth]{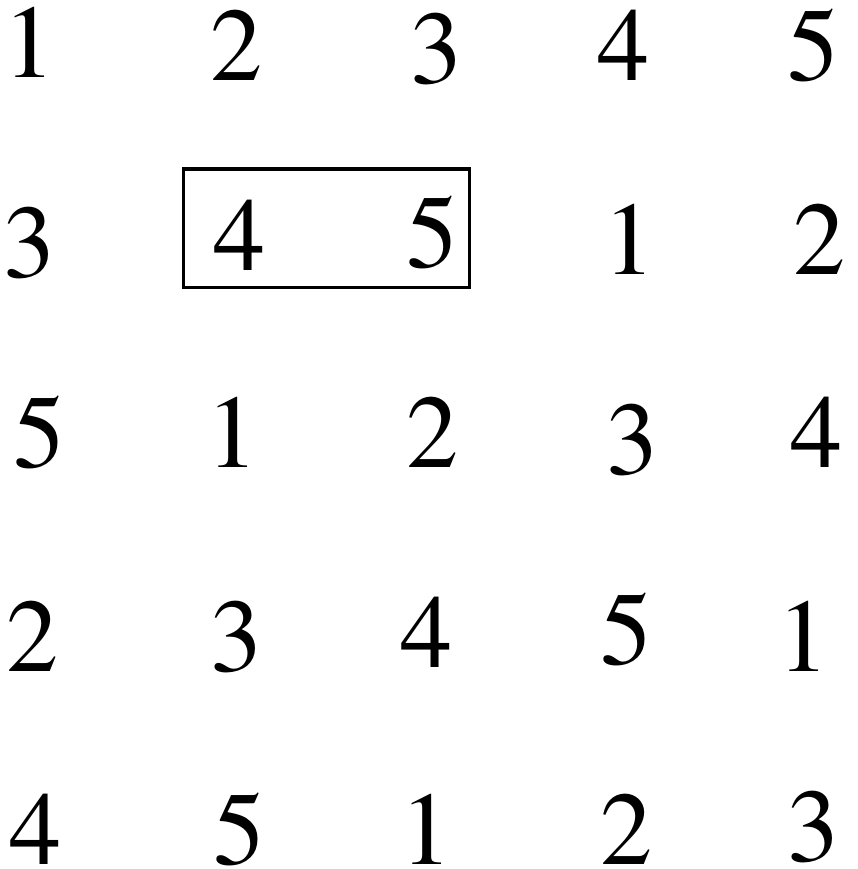}\caption{Frozen Pattern}
\label{Figure: Five colour}
\end{figure}
The following was observed by Brian Marcus: Recall that $K_n$ denotes the complete graph with $n$ vertices. $X_{K_4}, X_{K_5}$ do not possess the pivot property if the dimension is two. For instance consider a configuration in $X_{K_5}$ which is obtained by tiling the plane with the pattern given in Figure \ref{Figure: Five colour}. It is clear that the symbols in the box can be interchanged but no individual symbol can be changed. Therefore $X_{K_5}$ does not have the pivot property. However both $X_{K_4}$ and $X_{K_5}$ satisfy a more general property as discussed in Subsection \ref{subsection: Hom-shifts and the pivot property}.

The following theorem is another main result in this paper.
\begin{thm}\label{theorem: pivot property for four cycle free}
For all four-cycle free graphs $\H$, $X_\H$ has the pivot property.
\end{thm}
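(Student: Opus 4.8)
The plan is to lift the problem to the universal cover, where the target graph becomes a tree, solve it there, and project the solution back down. First I would reduce to $\H$ connected: since $\Z^d$ is connected each $x\in X_\H$ takes values in a single component of $\H$, and if $(x,y)\in\Delta_{X_\H}$ with $x=y$ off a finite $F\subset\Z^d$, then $x$ and $y$ agree on the infinite \emph{connected} (here $d\geq2$ is used) set $\Z^d\setminus F$, so they land in the same component. Fix then a connected four-cycle free $\H$. The key step, and the only place the four-cycle free hypothesis is used, is that every $x:\Z^d\to\H$ admits a lift $\t x:\Z^d\to E_\H$ with $\pi\circ\t x=x$: the obstruction is the monodromy around the unit squares that generate the relations of $\Z^d$, and a unit square $\mi,\mi+\m e_1,\mi+\m e_1+\m e_2,\mi+\m e_2$ maps under $x$ to a closed $4$-walk $a\sim_\H b\sim_\H c\sim_\H d\sim_\H a$; since $\H$ has no self-loops and no $C_4$ this walk must backtrack ($a=c$ or $b=d$), hence is null-homotopic and lifts to a closed walk in the tree $E_\H$. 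Because the lift at any vertex of $\Z^d\setminus F$ can be computed along a path inside $\Z^d\setminus F$, on which $x$ and $y$ coincide, fixing a common lift at a single vertex $\m p\in\Z^d\setminus F$ yields lifts $\t x,\t y\in X_{E_\H}$ with $\t x=\t y$ off $F$, i.e. $(\t x,\t y)\in\Delta_{X_{E_\H}}$.

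This reduces the theorem to the pivot property for homoclinic pairs in the tree hom-shift $X_{E_\H}$. Indeed, given a chain of pivots $\t x=\t x^{(1)},\ldots,\t x^{(k)}=\t y$ in $X_{E_\H}$, applying $\pi$ pointwise produces configurations in $X_\H$ in which successive ones agree except possibly at the single site where the upstairs pivot occurred; discarding the repeated configurations (those where the two lift values share a $\pi$-fibre) leaves a genuine chain of pivots from $x$ to $y$.

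For the heart of the argument I would exploit that $E_\H$ is a tree. Rooting $E_\H$ and letting $\phi$ be the graph distance to the root, each lift gives a height $f=\phi\circ\t x$ whose gradient is $\pm1$ across every edge of $\Z^d$. The decisive local fact, forced by triangle-freeness, is that $\t x_{\m s}$ can be changed at all only when its $2d$ neighbouring values coincide, equivalently when $\m s$ is a local extremum of $f$, and it may then be raised or lowered by $2$ or switched among the siblings of its former value. I would connect $\t x$ and $\t y$ (which share their values on $\partial F$) through a sequence of such extremal moves, confining every change to a finite region and hence to the finite subtree of $E_\H$ spanned by the values occurring on $F\cup\partial F$. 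On this finite subtree the moves are precisely those realising the folds of \cite{nowwinkler}: pushing a leaf value $u$ (with unique neighbour $w$) onto some $v\sim_{E_\H}w$ is legal, since every site carrying $u$ has all neighbours equal to $w$ and is therefore extremal. Folding the occupied subtree with the boundary on $\partial F$ held fixed should drive both $\t x$ and $\t y$ to a common normal form, and concatenating one folding sequence with the reverse of the other yields the required chain of pivots; termination is controlled by the monovariant $\sum_{\m s}d_{E_\H}(\t x_{\m s},\t y_{\m s})$, whose good behaviour reflects the subadditivity (triangle inequality) of the height.

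The main obstacle is exactly this tree step. The naive greedy strategy of moving $\t x_{\m s}$ one step along the tree-geodesic toward $\t y_{\m s}$ at a site of disagreement is almost never legal, because in a triangle-free graph it would force a triangle among $\t x_{\m s}$, its intended value, and a neighbour; one cannot decrease the discrepancy directly and must route through local-extremum (box-adding and box-removing) moves instead. Turning this into a proof requires (i) showing the pivots can be confined to a finite region even though the lifts $\t x,\t y$ may be unbounded in $E_\H$, and (ii) proving that folding the occupied subtree with fixed boundary terminates at a configuration independent of the order of folds, so that $\t x$ and $\t y$ reach the same normal form. These are where I expect the real work to concentrate, and where I anticipate drawing on the folding and height-function machinery developed in Sections~\ref{section:universal covers} and~\ref{Section:heights} rather than on any greedy comparison.
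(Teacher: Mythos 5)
Your reduction is exactly the paper's: lift the homoclinic pair $(x,y)$ to a pair $(\t x,\t y)\in\Delta_{X_{E_\H}}$ (Proposition \ref{proposition:covering_space_lifting} and Corollary \ref{corollary:covering_space_lifting_homoclinic}), find a chain of pivots upstairs, and push it down by $\pi$. Your local observation is also the right one: in the tree $E_\H$ a site can be pivoted only when all $2d$ neighbouring values coincide, since two distinct vertices with two distinct common neighbours would give a four-cycle. The gap is that you never actually exhibit a legal pivot that makes progress: the entire content of the theorem sits in your points (i) and (ii), which you explicitly leave open. ``Fold the subtree spanned by the values on $F\cup\partial F$ with the boundary held fixed and show both configurations reach a common normal form'' is a plan, not a proof. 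In particular, the terminal configuration of a folding process is a priori unique only up to a relabelling of the alphabet (Proposition \ref{Proposition: Uniqueness of full config-fold}), the stiff shift of a finite tree consists of \emph{two} checkerboard configurations, and you have not shown that $\t x$ and $\t y$ land on the same one rather than on the two different checkerboards; nor have you verified that folding a pattern on $F$ inside a configuration that leaves the subtree $\T$ outside $F$ stays in $X_{E_\H}$ at every step.

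The paper closes this gap with a different and shorter argument, not via folding: induct on $\sum_{\mi}d_{E_\H}(\t x_\mi,\t y_\mi)$, fix a reference vertex $\t v\in E_\H$, and consider a disagreement site $\mi_0$ whose $\t x$-value is farthest from $\t v$ (after arranging that $\t x$ attains the larger maximum over the disagreement set). A short case analysis using only the tree structure shows that every neighbour $\mi\sim\mi_0$ has $\t x_\mi=\t v_{n-1}$, the penultimate vertex on the geodesic from $\t v$ to $\t x_{\mi_0}$, so $\t x_{\mi_0}$ can be pivoted one step toward $\t y_{\mi_0}$, decreasing the sum by $2$. Your route is probably completable with the paper's own machinery --- a finite subtree folds onto an edge, Proposition \ref{prop: folding_fixing_a_set} and the Onion Peeling Proposition let one fold on a finite region while freezing its complement, and the common boundary values pin down the checkerboard because a tree vertex is adjacent to at most one endpoint of the stiff edge --- but those verifications are precisely the missing work, so the proposal as written does not prove the theorem.
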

It is sufficient to prove this theorem for four-cycle free graphs $\H$ which are connected because of the following proposition:
\begin{prop}\label{proposition: pivot for disconnected}
Let $X_1, X_2, \ldots, X_n$ be shift spaces on disjoint alphabets such that each of them has the pivot property. Then $\cup_{i=1}^n X_i$ also has the pivot property.
\end{prop}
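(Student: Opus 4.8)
The plan is to prove that the pivot property is preserved under disjoint unions of shift spaces. The key observation is that the alphabets of $X_1, X_2, \ldots, X_n$ are pairwise disjoint, which forces each configuration in $\cup_{i=1}^n X_i$ to use symbols from exactly one of the constituent shift spaces. First I would verify this structural fact: since $\cup_{i=1}^n X_i$ is a union of shift spaces it is itself a shift space (closed and shift-invariant), and any $x \in \cup_{i=1}^n X_i$ belongs to some $X_k$ and therefore has all its coordinates in the alphabet $\A_k$ of $X_k$. Because the alphabets are disjoint, no configuration can belong to two distinct $X_k$ simultaneously.

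Next I would take an arbitrary pivot-pair candidate: distinct configurations $(x,y) \in \Delta_{\cup_{i=1}^n X_i}$, so $x$ and $y$ agree off a finite set. The crucial step is to argue that $x$ and $y$ must lie in the \emph{same} $X_k$. Indeed, suppose $x \in X_k$ and $y \in X_\ell$ with $k \neq \ell$. Then $x$ takes values in $\A_k$ and $y$ in $\A_\ell$, two disjoint sets. Since $x$ and $y$ agree at all but finitely many sites, at any site $\mi$ where they agree we would have $x_\mi = y_\mi \in \A_k \cap \A_\ell = \emptyset$, a contradiction (such agreement sites exist because $\Z^d$ is infinite and only finitely many sites differ). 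Hence $k = \ell$, and both $x$ and $y$ belong to the single shift space $X_k$.

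Now I would invoke the pivot property of $X_k$: since $(x,y) \in \Delta_{X_k}$ and $X_k$ has the pivot property, there is a chain of pivots $x = x^{(1)}, x^{(2)}, \ldots, x^{(m)} = y$ inside $X_k$, where successive configurations differ at exactly one site. Each $x^{(j)}$ lies in $X_k \subset \cup_{i=1}^n X_i$, so this is simultaneously a chain of pivots in the larger space, which is exactly what the pivot property for $\cup_{i=1}^n X_i$ requires. This completes the argument.

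The proof is essentially routine once the disjointness of alphabets is exploited; there is no serious obstacle. The only point that requires a moment's care is confirming that $\Delta$-related configurations cannot straddle two components, which rests entirely on the fact that homoclinic configurations share coordinate values at infinitely many sites while the alphabets are disjoint. I would make sure to state explicitly that a pivot chain constructed within one component $X_k$ is automatically a valid pivot chain in the union, since $X_k \subseteq \cup_{i=1}^n X_i$ and the notion of a pivot (differing at a single site) is intrinsic to the configurations rather than to the ambient space.
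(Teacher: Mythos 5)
Your proposal is correct and is essentially the paper's own argument: the paper disposes of this in one line by noting that $(x,y)\in \Delta_{\cup_{i=1}^n X_i}$ forces $(x,y)\in \Delta_{X_i}$ for some $i$, which is exactly the disjoint-alphabet observation you spell out. Your version just makes explicit the agreement-at-infinitely-many-sites reasoning and the fact that a pivot chain in $X_k$ remains one in the union.
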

This is true since $(x, y)\in \Delta_{\cup_{i=1}^n X_i}$ implies $(x, y)\in \Delta_{X_i}$ for some $1\leq i \leq n$.

\section{Folding, Entropy Minimality and the Pivot Property}\label{section:Folding, Entropy Minimality and the Pivot Property} Given a graph $\H$ we say that a vertex $v$ \emph{folds} into a vertex $w$ if and only if $u \sim_\H v$ implies $u \sim_\H w$. In this case the graph $\H\setminus \{v\}$ is called a \emph{fold} of $\H$. The folding gives rise to a `retract' from $\H$ to $\H\setminus\{v\}$, namely the graph homomorphism from $\H$ to $\H\setminus \{v\}$ which is the identity on $\H\setminus \{v\}$ and sends $v$ to $w$. This was introduced in \cite{nowwinkler} to help characterise cop-win graphs and used in \cite{brightwell2000gibbs} to establish many properties which are preserved under `folding' and `unfolding'. Given a finite tree $\H$ with more than two vertices note that a leaf vertex (vertex of degree $1$) can always be folded to some other vertex of the tree. Thus starting with $\H$, there exists a sequence of folds resulting in a single edge. In fact using a similar argument we can prove the following proposition.

\begin{prop}\label{proposition:folding trees into other trees}
Let $\H\subset \H^\prime$ be trees. Then there is a graph homomorphism $f: \H^\prime \longrightarrow \H$ such that $f|_{\H}$ is the identity map.
\end{prop}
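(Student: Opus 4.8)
The plan is to prove this by induction on the number of vertices in $\H^\prime \setminus \H$, using the fact that a leaf of $\H^\prime$ that does not belong to $\H$ can always be folded away while keeping $\H$ fixed.

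First I would handle the base case: if $\H^\prime \setminus \H$ is empty then $\H^\prime = \H$ and the identity map works. For the inductive step, suppose $\H \subsetneq \H^\prime$. The key observation is that $\H^\prime$, being a finite tree with strictly more vertices than $\H$, must have a leaf vertex $v$ (degree $1$ in $\H^\prime$) that lies in $\H^\prime \setminus \H$. To justify this, I would argue that not every leaf of $\H^\prime$ can belong to $\H$: since $\H$ is a proper subtree, consider the subtree $\H^\prime$ and note that a tree with more vertices than a given subtree must have a leaf outside it — concretely, take any vertex $u \in \H^\prime \setminus \H$ and follow a longest path in $\H^\prime$ starting away from $\H$; its endpoint is a leaf not in $\H$. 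I should be slightly careful here and phrase it as: pick $v \in \H^\prime \setminus \H$ at maximal graph-distance from (some fixed vertex of) $\H$; then $v$ is a leaf of $\H^\prime$, since any neighbor of $v$ other than its unique neighbor on the geodesic toward $\H$ would be strictly farther from $\H$, contradicting maximality, and this also forces $\deg_{\H^\prime}(v) = 1$.

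Next, let $w$ be the unique neighbour of the leaf $v$ in $\H^\prime$. Since $v$ has degree $1$, its only neighbour is $w$, so trivially any $u \sim_{\H^\prime} v$ satisfies $u \sim_{\H^\prime} w$ (the only such $u$ is $w$ itself, and $w \sim_{\H^\prime} w$ is vacuous/handled by the absence of self-loops being irrelevant to the folding definition as stated). Thus $v$ folds into $w$ in $\H^\prime$, giving a retract $g: \H^\prime \longrightarrow \H^\prime \setminus \{v\}$ which is the identity on $\H^\prime \setminus \{v\}$ and sends $v$ to $w$. Crucially, because $v \in \H^\prime \setminus \H$, we have $\H \subset \H^\prime \setminus \{v\}$, and $\H^\prime \setminus \{v\}$ is again a tree (removing a leaf from a tree yields a tree). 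By the induction hypothesis applied to $\H \subset \H^\prime \setminus \{v\}$, there is a graph homomorphism $f^\prime : \H^\prime \setminus \{v\} \longrightarrow \H$ with $f^\prime|_{\H}$ the identity. I would then set $f := f^\prime \circ g$, which is a composition of graph homomorphisms and hence a graph homomorphism from $\H^\prime$ to $\H$; since $g$ restricts to the identity on $\H \subset \H^\prime \setminus \{v\}$ and $f^\prime$ restricts to the identity on $\H$, the composite $f$ restricts to the identity on $\H$, completing the induction.

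The main obstacle I anticipate is the combinatorial lemma that a proper subtree always leaves a removable leaf outside itself, i.e. guaranteeing the existence of a leaf $v \in \H^\prime \setminus \H$ so that the induction actually decreases $|\H^\prime \setminus \H|$ while preserving the containment $\H \subset \H^\prime \setminus \{v\}$. The maximal-distance argument resolves this cleanly, but it is the one point where the tree hypothesis (no cycles, connectedness) is genuinely used, since in a general graph a vertex of $\H^\prime \setminus \H$ need not be foldable and a proper connected subgraph need not admit an identity-fixing retract.
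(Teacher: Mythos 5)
Your overall strategy coincides with the paper's: induct on $|\H^\prime\setminus\H|$ and peel off a leaf of $\H^\prime$ lying outside $\H$. Your maximal-distance argument for the existence of such a leaf is a correct elaboration of a step the paper leaves implicit. However, the folding step contains a genuine error. Under the paper's definition, $v$ folds into $w$ if and only if $u\sim_{\H^\prime}v$ implies $u\sim_{\H^\prime}w$; for a leaf $v$ whose unique neighbour is $w$, the only admissible $u$ is $w$ itself, so the condition is \emph{not} vacuous --- it demands $w\sim_{\H^\prime}w$, i.e.\ a self-loop at $w$, which a tree does not have. Your parenthetical dismissal of this point is exactly where the argument breaks: the map $g$ you define sends the edge $\{v,w\}$ to the pair $\{w,w\}$, which is not an edge of $\H^\prime\setminus\{v\}$, so $g$ is not a graph homomorphism and neither is $f=f^\prime\circ g$. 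This is precisely why the paper is careful to say that a leaf of a tree with more than two vertices ``can always be folded to some \emph{other} vertex of the tree'' rather than to its neighbour.

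The repair is small. Fold $v$ into a vertex $w^\prime\neq v$ with $w^\prime\sim_{\H^\prime}w$, i.e.\ a vertex at distance $2$ from $v$. Then the only neighbour $w$ of $v$ is also a neighbour of $w^\prime$, so $v$ genuinely folds into $w^\prime$, and the retract sends the edge $\{v,w\}$ to the actual edge $\{w^\prime,w\}$; since $w^\prime,w\in\H^\prime\setminus\{v\}$ and $\H\subset\H^\prime\setminus\{v\}$, the rest of your induction goes through verbatim. Such a $w^\prime$ exists whenever $w$ has a second neighbour in $\H^\prime$, which is automatic once $|\H^\prime|\geq 3$ (by connectedness $\H^\prime$ cannot consist of the single edge $\{v,w\}$ plus other vertices). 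The only case not covered is $\H$ a single vertex inside $\H^\prime$ a single edge, where in fact no loopless graph homomorphism retracting $\H^\prime$ onto $\H$ exists at all; the proposition is implicitly applied only when $\H$ contains an edge, so this degeneracy does not affect its use in the paper, but it is worth noting that the leaf must be folded past its neighbour, not onto it.
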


To show this, first note that if $\H\subsetneq\H^\prime$ then there is a leaf vertex in $\H^\prime$ which is not in $\H$. This leaf vertex can be folded into some other vertex in $\H^\prime$. Thus by induction on $|\H^\prime \setminus \H|$ we can prove that there is a sequence of folds from $\H^\prime$ to $\H$. Corresponding to this sequence of folds we obtain a graph homomorphism from $\H^\prime$ to $\H$ which is the identity on $\H$.

Here we consider a related notion for shift spaces. Given a nearest neighbour shift of finite type $X\subset \A^{\Z^d}$, \emph{the neighbourhood} of a symbol $v\in \A$ is given by
$$N_X(v):=\{a \in \A^{\partial \m 0}\:|\: [v]_{\m 0}\cap [a]_{\partial \m 0}\in \L_{D_1}(X)\},$$
that is the collection of all patterns which can `surround' $v$ in $X$. We will say that $v$ \emph {config-folds} into $w$ in $X$ if $N_X(v)\subset N_X(w)$. In such a case we say that $X$ \emph{config-folds} to $X\cap(\A\setminus \{v\})^{\Z^d}$. Note that $X\cap(\A\setminus \{v\})^{\Z^d}$ is obtained by forbidding $v$ from $X$ and hence it is also a nearest neighbour shift of finite type. Also if $X=X_\H$ for some graph $\H$ then $v$ config-folds into $w$ in $X_\H$ if and only if $v$ folds into $w$ in $\H$. Thus if $\H$ is a tree then there is a sequence of folds starting at $X_\H$ resulting in the two checkerboard configurations with two symbols (the vertices of the edge which $\H$ folds into). This property is weaker than the notion of folding introduced in \cite{chandgotiahammcliff2014}.

The main thrust of this property in our context is: if $v$ config-folds into $w$ in $X$ then given any $x\in X$, every appearance of $v$ in $x$ can be replaced by $w$ to obtain another configuration in $X$. This replacement defines a factor (surjective, continuous and shift-invariant) map $f: X\longrightarrow X\cap(\A\setminus \{v\})^{\Z^d}$ given by
\begin{equation*}
(f(x))_{\m i}:=\begin{cases}
x_{\m i}&\text{ if } x_{\m i}\neq v\\
w&\text{ if } x_{\m i}= v.
\end{cases}•
\end{equation*}
Note that the map $f$ defines a `retract' from $X$ to $X\cap(\A\setminus \{v\})^{\Z^d}$. Frequently we will config-fold more than one symbol at once (especially in Section \ref{section: Proof of the main theorems}):

Distinct symbols $v_1, v_2, \ldots, v_n$ \emph{config-fold disjointly} into $w_1, w_2, \ldots, w_n$ in $X$ if $v_i$ config-folds into $w_i$ and $v_i\neq w_j$ for all $1\leq i, j \leq n$. In this case the symbols $v_1, v_2, \ldots, v_n$ can be replaced by $w_1, w_2, \ldots, w_n$ simultaneously for all $x \in X$. Suppose $v_1, v_2,\ldots v_n$ is a maximal set of symbols which can be config-folded disjointly in $X$. Then $X\cap(\A\setminus \{v_1, v_2, \ldots, v_n\})^{\Z^d}$ is called a \emph{full config-fold} of $X$.

For example consider a tree $\H:=(\V,\E1)$ where $\V:=\{v_1, v_2, v_3, \ldots, v_{n+1}\}$ and $\E1:=\{(v_i, v_{n+1})\:|\: 1\leq i \leq n\}$. For all $1\leq i \leq n$, $\V\setminus \{v_i, v_{n+1}\}$ is a maximal set of symbols which config-folds disjointly in $X_\H$ resulting in the checkerboard patterns with the symbols $v_i$ and $v_{n+1}$ for all $1\leq i \leq n$. Thus the full config-fold of a shift space is not necessarily unique. However it is unique up to conjugacy:

\begin{prop}\label{Proposition: Uniqueness of full config-fold}
The full config-fold of a nearest neighbour shift of finite type is unique up to conjugacy via a change of the alphabet.
\end{prop}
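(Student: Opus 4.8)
The plan is to reduce the statement to a purely combinatorial description of the symbols that \emph{survive} a full config-fold, and then to recognise the relabelling relating two full config-folds as the restriction of one of the defining factor maps. To set up, I would order the alphabet by the preorder $v \preceq w \iff N_X(v)\subseteq N_X(w)$, writing $v\approx w$ when $N_X(v)=N_X(w)$; thus $v$ config-folds into $w$ precisely when $v\preceq w$ and $v\neq w$. Since $\A$ is finite, the $\approx$-classes form a finite poset under $\preceq$, and I will call a class \emph{maximal} when it is maximal there.

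The central combinatorial claim is: if $S\subseteq\A$ is a maximal set of symbols config-folding disjointly in $X$, then $\A\setminus S$ is a transversal of the maximal $\approx$-classes, i.e.\ it contains exactly one symbol from each maximal class and no other symbols. I would prove this in three steps. (i) No maximal class is entirely contained in $S$: an element of a maximal class can only fold into a symbol of the \emph{same} class, and that symbol must lie outside $S$, so at least one representative of every maximal class survives. (ii) Two symbols of one maximal class cannot both survive, else one folds into the other and may be adjoined to $S$. (iii) Every non-maximal symbol lies in $S$: by transitivity of $\preceq$ it is dominated by the surviving representative of some maximal class, so it too could be adjoined to $S$. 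The adjunction used in (ii) and (iii) is the delicate point: to add a symbol $a$ to $S$, folding it into a surviving representative $t$ with $a\preceq t$, one must repair the members of $S$ that were folded into $a$; but each such member is $\preceq a\preceq t$, hence can be re-routed to fold into $t$ instead, restoring the disjointness condition $v_i\neq w_j$ and contradicting the maximality of $S$.

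With the claim in hand, let $Y_1=X\cap(\A\setminus S_1)^{\Z^d}$ and $Y_2=X\cap(\A\setminus S_2)^{\Z^d}$ be two full config-folds. Both $\A\setminus S_1$ and $\A\setminus S_2$ are transversals of the \emph{same} collection of maximal classes, so the map $\phi\colon \A\setminus S_1\to \A\setminus S_2$ sending each representative to the representative of its own class is a bijection of alphabets. To see that $\phi$ induces a conjugacy, I would observe that the factor map $f_2\colon X\to Y_2$ coming from the second fold must send each survivor $m$ of the first fold to $\phi(m)$: as $m$ lies in a maximal class, any target of $m$ under $f_2$ is $\approx$-equivalent to $m$ and lies in $\A\setminus S_2$, so it can only be $\phi(m)$. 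Since configurations of $Y_1$ use survivor symbols only, $f_2|_{Y_1}$ is exactly the coordinate-wise application of $\phi$ and maps $Y_1$ into $Y_2$; symmetrically $f_1|_{Y_2}$ realises $\phi^{-1}$. These are mutually inverse sliding block codes, so $\phi$ is a conjugacy effected by a change of alphabet.

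The main obstacle I anticipate is the combinatorial claim, and within it the re-routing argument that pins down $\A\setminus S$ exactly from the maximality of $S$. Once that is secured the conjugacy comes essentially for free, because viewing the relabelling as the restriction of the factor map $f_2$ sidesteps any direct verification that a simultaneous renaming of symbols preserves the nearest-neighbour constraints defining $X$.
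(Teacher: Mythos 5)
Your proof is correct and follows essentially the same route as the paper: both identify the surviving alphabet of a full config-fold as a transversal of the maximal classes for the preorder $N_X(v)\subseteq N_X(w)$ (the paper's set $M$ partitioned into $A_1,\ldots,A_r$) and then relabel representatives class by class. Your re-routing argument makes explicit a maximality step the paper leaves implicit, and realising the relabelling as the restriction of the second fold's factor map is a tidy way to package the final verification, but neither changes the substance of the argument.
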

The ideas for the following proof come essentially from the proof of Theorem 4.4 in \cite{brightwell2000gibbs} and discussions with Prof. Brian Marcus.
\begin{proof}
Let $X\subset \A^{\Z^d}$ be a nearest neighbour shift of finite type and
$$M:=\{v\in \A \:|\: \text{ for all }w\in \A,\  v \text{ config-folds into }w \Longrightarrow w \text{ config-folds into }v\}.$$
There is a natural equivalence relation $\equiv$ on $M$ given by $v\equiv w$ if $v$ and $w$ config-fold into each other. Let $A_1, A_2, A_3, \ldots, A_r\subset M$ be the corresponding partition. Clearly for all distinct $v, w\in M$, $v$ can be config-folded into $w$ if and only if $v, w\in A_i$ for some $i$. It follows that $A\subset A_i$ can be config-folded disjointly if and only if $\emptyset\neq A\neq A_i$.

Let $v\in \A\setminus M$. We will prove that $v$ config-folds to a symbol in $M$. By the definition of $M$ there exists $v_1\in \A$ such that $N_X(v)\subsetneq N_X(v_1)$. If $v_1\in M$ then we are done, otherwise choose $v_2\in \A$ such that $N_X(v_1)\subsetneq N_X(v_2)$. Continuing this process recursively we can find a sequence $v= v_0, v_1, v_2, \ldots, v_n$ such that $N_X(v_{i-1})\subsetneq N_X(v_i)$ for all $1\leq i \leq n$ and $v_n\in M$. Thus $v$ config-folds into $v_n$, a symbol in $M$. Further if $v$ config-folds to a symbol in $A_i$ it can config-fold to all the symbols in $A_i$. Therefore $B$ is a maximal subset of symbols in $\A$ which can be config-folded disjointly if and only if $B=\cup_{i=1}^rB_i\cup (\A\setminus M)$ where $B_i\subset A_i$ and $|A_i\setminus B_i|=1$. Let $B'\subset \A$ be another such maximal subset, $\A\setminus B:=\{b_1, b_2, \ldots, b_r\}$ and $\A\setminus B':=\{b'_1, b'_2, \ldots, b'_r\}$ where $b_i, b'_i\in A_i$. Then the map
$$f: X\cap (\A\setminus B)^{\Z^d} \longrightarrow X\cap (\A\setminus B')^{\Z^d} \text{ given by } f(x) := y\text{ where } y_{\m i}=b'_j \text{ whenever }x_{\m i}=b_j$$
is the required change of alphabet between the two full config-folds of $X$. \end{proof}

Let $X\cap(\A\setminus \{v_1, v_2, \ldots, v_n\})^{\Z^d}$ be a \emph{full config-fold} of $X$ where $v_i$ config-folds into $w_i$ for all $1\leq i\leq n$. Consider $f_X: \A \longrightarrow\A\setminus \{v_1, v_2, \ldots, v_n\}$ given by
\begin{equation*}
f_X(v):=\begin{cases}
v&\text{ if } v\neq v_j \text{ for all }1\leq j\leq n\\
w_j&\text{ if } v= v_j\text{ for some }1\leq j \leq n.
\end{cases}•
\end{equation*}
\noindent This defines a factor map $f_X: X\longrightarrow X\cap(\A\setminus \{v_1, v_2, \ldots, v_n\})^{\Z^d}$ given by $(f_X(x))_{\mi}:= f_X(x_\mi)$ for all $\mi \in \Z^d$. $f_X$ denotes both the factor map and the map on the alphabet; it should be clear from the context which function is being used.

In many cases we will fix a configuration on a set $A\subset \Z^d$ and apply a config-fold on the rest. Hence we define the map $f_{X,A}: X\longrightarrow X$ given by
\begin{equation*}
(f_{X,A}(x))_{\m i}:=\begin{cases}
x_{\m i}&\text{ if } \m i \in A\\
f_X(x_{\m i})&\text{ otherwise.}
\end{cases}•
\end{equation*}•

The map $f_{X,A}$ can be extended beyond $X$:

\begin{prop}\label{prop: folding_fixing_a_set}
Let $X\subset Y$ be nearest neighbour shifts of finite type, $Z$ be a full config-fold of $X$ and $y\in Y$ such that for some $A\subset \Z^d$, $y|_{A^c\cup\partial (A^c)}\in \L_{A^c\cup\partial (A^c)}(X)$. Then the configuration $z$ given by
\begin{equation*}
z_{\m i}:=\begin{cases}
y_{\m i}&\text{ if } \m i \in A\\
f_X(y_{\m i})&\text{ otherwise}
\end{cases}•
\end{equation*}
is an element of $Y$. Moreover $z|_{A^c}\in \L_{A^c}(Z)$.
\end{prop}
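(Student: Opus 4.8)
The plan is to exploit that $Y$, being a nearest neighbour shift of finite type, is cut out by constraints on single vertices and single edges; hence to prove $z\in Y$ it suffices to show that every vertex value and every edge pattern occurring in $z$ already occurs in some configuration of $Y$. The crucial extra input is the hypothesis $y|_{A^c\cup\partial (A^c)}\in \L_{A^c\cup\partial (A^c)}(X)$: I would fix a configuration $x\in X$ with $x|_{A^c\cup\partial (A^c)}=y|_{A^c\cup\partial (A^c)}$, and set $x':=f_{X,A}(x)$, which lies in $X\subset Y$ by the very definition of the map $f_{X,A}\colon X\longrightarrow X$.

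The core of the argument is the observation that $z$ is glued from two pieces, each of which sits inside a genuine configuration. On $A$ we have $z=y$ by definition, so every edge of $z$ with both endpoints in $A$ is an edge of $y\in Y$. I then claim that $z$ agrees with $x'$ on all of $A^c\cup\partial (A^c)$: for $\mi\in A^c$ we have $z_{\mi}=f_X(y_{\mi})=f_X(x_{\mi})=x'_{\mi}$, while for $\mi\in\partial (A^c)\subset A$ we have $z_{\mi}=y_{\mi}=x_{\mi}=x'_{\mi}$, using that $x$ and $y$ coincide on $A^c\cup\partial (A^c)$ and that $f_{X,A}$ leaves coordinates in $A$ unchanged. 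The point of the overlap layer $\partial (A^c)$ is exactly this: it is where $y$, $x$ and $x'$ all agree, so the fixed part $y|_A$ and the folded part match seamlessly.

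These two regions account for every edge of $z$. An edge with both endpoints in $A$ is covered by the first piece; any edge with an endpoint in $A^c$ has its other endpoint in $A^c$ or in $\partial (A^c)$, hence lies entirely within $A^c\cup\partial (A^c)$, where $z=x'\in X\subset Y$. Thus every edge and every vertex of $z$ appears either in $y$ or in $x'$, both elements of $Y$, so no forbidden pattern of $Y$ can occur in $z$ and $z\in Y$. For the final assertion, note $z|_{A^c}=x'|_{A^c}$, and $f_X(x')\in Z$ since $f_X\colon X\longrightarrow Z$ is a factor map; because on $A^c$ the configuration $x'$ already avoids the folded symbols $v_1,\ldots,v_n$ (being an image of $f_X$ there) and $f_X$ is the identity on $\A\setminus\{v_1,\ldots,v_n\}$, we get $f_X(x')|_{A^c}=x'|_{A^c}=z|_{A^c}$, whence $z|_{A^c}\in\L_{A^c}(Z)$.

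I expect the only delicate point to be the boundary bookkeeping, namely ruling out an edge that straddles the fixed region and the folded region across a site where the two descriptions of $z$ might differ. This is precisely why the hypothesis is imposed on $A^c\cup\partial (A^c)$ rather than merely on $A^c$: the extra layer $\partial (A^c)$ supplies the overlap on which $x$, $y$ and $x'$ coincide, making the gluing and the membership $z|_{A^c\cup\partial(A^c)}=x'|_{A^c\cup\partial(A^c)}$ automatic.
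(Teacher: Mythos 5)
Your argument is correct and is precisely the gluing argument the paper leaves implicit (the proposition is stated there without a written proof, as a consequence of the preceding discussion of config-folds): realize the hypothesis by some $x\in X$ agreeing with $y$ on $A^c\cup\partial(A^c)$, observe that $z$ coincides with $y\in Y$ on $A$ and with $f_{X,A}(x)\in X\subset Y$ on $A^c\cup\partial(A^c)$, and conclude via the nearest-neighbour property since every edge lies in one of these two regions; the ``moreover'' clause via $f_X(x')|_{A^c}=x'|_{A^c}$ is also right. The only dependence worth acknowledging is that you invoke the well-definedness of $f_{X,A}\colon X\longrightarrow X$ (the special case $Y=X$), which the paper indeed asserts beforehand from the defining property of config-folds, so this is legitimate and not circular.
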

Abusing the notation, in such cases we shall denote the configuration $z$ by $f_{X, A}(y)$.

If $A^c$ is finite, then $f_{X,A}$ changes only finitely many coordinates. These changes can be applied one by one, that is, there is a chain of pivots in $Y$ from $y$ to $f_{X,A}(y)$.

A nearest neighbour shift of finite type which cannot be config-folded is called a \emph{stiff shift}. We know from Theorem 4.4 in \cite{brightwell2000gibbs} that all the stiff graphs obtained by a sequence of folds of a given graph are isomorphic. By Proposition \ref{Proposition: Uniqueness of full config-fold} the corresponding result for nearest neighbour shifts of finite type immediately follows:
\begin{prop}\label{proposition:uniqueness of stiff shifts}
The stiff shift obtained by a sequence of config-folds starting with a nearest neighbour shift of finite type is unique up to conjugacy via a change of the alphabet.
\end{prop}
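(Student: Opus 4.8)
The plan is to promote Proposition~\ref{Proposition: Uniqueness of full config-fold}, which controls a single \emph{maximal parallel} config-fold, into a statement about arbitrary sequences of single config-folds, by means of an induction on the size of the alphabet together with a confluence argument. The organising idea is to build from the full config-fold a canonical object and then show that every sequence of single config-folds must arrive at it.

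First I would turn the full config-fold into a well-defined operation on conjugacy classes. A change-of-alphabet conjugacy preserves the neighbourhoods $N_X(\cdot)$, hence the config-folding relation, so conjugate nearest neighbour shifts of finite type have conjugate full config-folds; Proposition~\ref{Proposition: Uniqueness of full config-fold} makes this precise. Iterating, I would define $S(X)$ to be the shift reached by repeatedly applying the full config-fold until it stabilises. Every full config-fold of a non-stiff shift deletes at least one symbol, since a nontrivial config-folding relation always produces a nonempty maximal disjoint config-foldable set; the alphabet being finite, the process terminates at a stiff shift, and an induction on the number of iterations, using Proposition~\ref{Proposition: Uniqueness of full config-fold}, shows that $S(X)$ is well defined up to conjugacy. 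A stiff shift admits no nontrivial config-fold and is thus a fixed point, so $S(Z)\cong Z$ for stiff $Z$ and $S(X)\cong S(Z_0)$ for every single full config-fold $Z_0$ of $X$.

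Next I would isolate two facts about single folds. First, the subshift $X\cap(\A\setminus\{v\})^{\Z^d}$ produced by config-folding $v$ depends only on $v$ and not on the chosen target $w$; only the accompanying factor map depends on $w$. Second, disjoint config-foldability persists: if $v_1,\dots,v_n$ config-fold disjointly, then after folding $v_1$ the symbols $v_2,\dots,v_n$ still config-fold disjointly and reach the same shift. The reason is that any $v_1$-free pattern on $D_1$ witnessing a surrounding of $v_i$ lies in $\L_{D_1}(X)$ and, being $v_1$-free, is carried unchanged by the factor map $f\colon X\to X\cap(\A\setminus\{v_1\})^{\Z^d}$ to a witness for the (unremoved, hence admissible) target. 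In particular a full config-fold is itself realisable as a sequence of single config-folds, so $S(X)$ is reachable by single folds.

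Finally I would prove by strong induction on $|\A|$ that every maximal sequence of single config-folds from $X$ terminates at a shift conjugate to $S(X)$. If $X$ is stiff this is vacuous. Otherwise write such a sequence as $X\to X_1\to\cdots\to Z$ with first fold $v\mapsto w$; applying the inductive hypothesis to $X_1$, whose alphabet is strictly smaller, gives $Z\cong S(X_1)$, so it suffices to show $S(X_1)\cong S(X)$. I would choose a maximal disjoint config-foldable set $B$ of $X$ with $v\in B$ (possible whether or not $v$ lies in the set $M$ from the proof of Proposition~\ref{Proposition: Uniqueness of full config-fold}), giving a full config-fold $Z_0=X\cap(\A\setminus B)^{\Z^d}$ with $S(X)\cong S(Z_0)$. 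By the two facts above, folding $v$ first leaves $B\setminus\{v\}$ config-foldable disjointly in $X_1$, and folding it reaches the same $Z_0$; thus $Z_0$ is obtained from $X_1$ by single folds, and the inductive hypothesis yields $S(X_1)\cong S(Z_0)\cong S(X)$, completing the induction. The main obstacle is exactly this confluence step: reconciling an arbitrary single fold $v\mapsto w$ with the parallel folds packaged inside Proposition~\ref{Proposition: Uniqueness of full config-fold}. What makes it go through is the target-independence of $X\cap(\A\setminus\{v\})^{\Z^d}$ together with the persistence of disjoint config-foldability, and the one place demanding genuine care is verifying the latter at the level of globally allowed $D_1$-patterns rather than merely locally allowed ones.
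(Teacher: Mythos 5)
Your proof is correct. Note that the paper does not actually write out a proof of this proposition: it cites the graph analogue (Theorem 4.4 of \cite{brightwell2000gibbs}, uniqueness of the stiff graph obtained by folding) and declares that the shift version ``immediately follows'' from Proposition \ref{Proposition: Uniqueness of full config-fold}. Your confluence argument is exactly the adaptation being gestured at --- it pivots on the same key lemma --- and it supplies the details the paper omits: the target-independence of a single config-fold, the persistence of disjoint config-foldability after a single fold (which you rightly verify at the level of globally allowed $D_1$-patterns by pushing a witnessing configuration through the factor map, since $w_i\neq v_1$ by disjointness), the existence of a maximal disjointly config-foldable set containing any prescribed foldable symbol (this genuinely needs the partition $M=A_1\cup\dots\cup A_r$ from the proof of Proposition \ref{Proposition: Uniqueness of full config-fold}), and the induction on $|\A|$. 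The only step I would make explicit is the last one: from the single-fold path $X_1\to\dots\to Z_0$ you should extend to a maximal path ending in a stiff $Z''$ and apply the inductive hypothesis twice, to $X_1$ (giving $Z''\cong S(X_1)$) and to $Z_0$ (giving $Z''\cong S(Z_0)$); as written, the conclusion $S(X_1)\cong S(Z_0)$ is asserted slightly faster than the hypothesis delivers it. This is an elaboration, not a gap.
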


Starting with a nearest neighbour shift of finite type $X$ the \emph{fold-radius} of $X$ is the smallest number of full config-folds required to obtain a stiff shift. If $\H$ is a tree then the fold-radius of $X_\H$ is equal to
$$\left\lfloor\frac{diameter(\H)}{2}\right\rfloor.$$
Thus for every nearest neighbour shift of finite type $X$ there is a sequence of full config-folds (not necessarily unique) which starts at $X$ and ends at a stiff shift of finite type. Let the fold-radius of $X$ be $r$ and $X= X_0, X_1, X_2, \ldots, X_r$ be a sequence of full config-folds where $X_r$ is stiff. This generates a sequence of maps $f_{X_i}:X_{i}\longrightarrow X_{i+1}$ for all $0\leq i \leq r-1$. In many cases we will fix a pattern on $D_n$ or $D_n^c$ and apply these maps on the rest of the configuration. Consider the maps $I_{X,n}:X\longrightarrow X$ and $O_{X,n}:X\longrightarrow X$ (for $n>r$) given by
\begin{equation*}
I_{X,n}(x):=f_{X_{r-1},D_{n+r-1} }\left(f_{X_{r-2}, D_{n+r-2}}\left(\ldots\left(f_{X_{0}, D_n}(x)\right)\ldots\right)\right)\text{(Inward Fixing Map)}
\end{equation*}•
and
\begin{eqnarray*}
O_{X,n}(x):=f_{X_{r-1}, D_{n-r+1}^c }\left(f_{X_{r-2}, D_{n-r+2}^c}\left(\ldots\left(f_{X_{0}, D_n^c}(x)\right)\ldots\right)\right)\text{(Outward Fixing Map)}.
\end{eqnarray*}•
Similarly we consider maps which do not fix anything, $F_X: X\longrightarrow X_r$ given by
\begin{eqnarray*}
F_X(x):= f_{X_{r-1}}\left(f_{X_{r-2}}\left(\ldots\left(f_{X_{0}}(x)\right)\ldots\right)\right).
\end{eqnarray*}
Note that $D_k\cup \partial D_k= D_{k+1}$ and $D_k^c\cup \partial (D_k^c)=D_{k-1}^c$. This along with repeated application of Proposition \ref{prop: folding_fixing_a_set} implies that the image of $I_{X,n}$ and $O_{X,n}$ lie in $X$. This also implies the following proposition:

\begin{prop}[The Onion Peeling Proposition]\label{prop: folding_ to _ stiffness_fixing_a_set}
Let $X\subset Y$ be nearest neighbour shifts of finite type, $r$ be the fold-radius of $X$, $Z$ be a stiff shift obtained by a sequence of config-folds starting with $X$ and $y^1, y^2\in Y$ such that $y^1|_{D_{n-1}^c}\in \L_{D_{n-1}^c}(X)$ and $y^2|_{D_{n+1}}\in \L_{D_{n+1}}(X)$. Let $z^1, z^2\in Y$ be given by
\begin{eqnarray*}
z^1&:=&f_{X_{r-1},D_{n+r-1} }\left(f_{X_{r-2}, D_{n+r-2}}\left(\ldots\left(f_{X_{0}, D_n}(y^1)\right)\ldots\right)\right)\\
z^2&:=&f_{X_{r-1}, D_{n-r+1}^c }\left(f_{X_{r-2},D_{n-r+2}^c}\left(\ldots\left(f_{X_{0}, D_n^c}(y^2)\right)\ldots\right)\right)\text{ for }n>r.
\end{eqnarray*}•
The patterns $z^1|_{D_{n+r-1}^c}\in \L_{D_{n+r-1}^c}(Z)$ and $z^2|_{D_{n-r+1}}\in \L_{D_{n-r+1}}(Z)$. If $y^1, y^2\in X$ then in addition
\begin{eqnarray*}
z^1|_{D_{n+r-1}^c}&=&F_X(y^1)|_{D_{n+r-1}^c}\text{ and}\\
z^2|_{D_{n-r+1}}&=&F_X(y^2)|_{D_{n-r+1}}.
\end{eqnarray*}•
\end{prop}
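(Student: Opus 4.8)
The statement to prove is the Onion Peeling Proposition, which asserts that the iterated inward/outward fixing maps produce configurations that (a) agree on the relevant region with the language of the stiff shift $Z$, and (b) when the input already lies in $X$, agree on that region with the unrestricted folding map $F_X$. The core engine is Proposition \ref{prop: folding_fixing_a_set}, so the whole argument is essentially a careful bookkeeping of how the fixing sets $D_n, D_{n+1}, D_{n+2}, \ldots$ (respectively $D_n^c, D_{n-1}^c, \ldots$) nest as we iterate, combined with the elementary identities $D_k\cup\partial D_k = D_{k+1}$ and $D_k^c\cup\partial(D_k^c)=D_{k-1}^c$ already noted in the excerpt.

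First I would verify the hypotheses needed to apply Proposition \ref{prop: folding_fixing_a_set} at each of the $r$ stages. For $z^1$: the hypothesis $y^1|_{D_{n-1}^c}\in\L_{D_{n-1}^c}(X)$ gives, on the complement of the fixing set $D_n$, that $y^1$ restricted to $D_n^c\cup\partial(D_n^c)=D_{n-1}^c$ lies in $\L(X)$, which is exactly the condition Proposition \ref{prop: folding_fixing_a_set} requires to conclude that $f_{X_0,D_n}(y^1)\in Y$ and that its restriction to $D_n^c$ lies in $\L_{D_n^c}(X_1)$. At the next stage the fixing set is $D_{n+1}$, and the complement condition now concerns $D_{n+1}^c\cup\partial(D_{n+1}^c)=D_n^c$, which is precisely where we just certified membership in $\L(X_1)$; so the hypothesis for applying $f_{X_1,D_{n+1}}$ is met, and inductively each successive fold enlarges the fixed ball by one radius, keeping the ``moving'' region one step ahead of where membership has been established. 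After all $r$ folds the moving region has been pushed out to $D_{n+r-1}^c$ and its image lies in $\L_{D_{n+r-1}^c}(X_r)=\L_{D_{n+r-1}^c}(Z)$, giving the first claim for $z^1$; the argument for $z^2$ is the mirror image, with $D_n^c$ shrinking inward to $D_{n-r+1}$ via the dual boundary identity.

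For the second, sharper claim (when $y^1,y^2\in X$), I would argue that on the moving region the fixing maps $f_{X_i, D_{\bullet}}$ act identically to the unrestricted maps $f_{X_i}$. The only difference between $f_{X_i,A}$ and $f_{X_i}$ is that the former leaves the fixed set $A$ untouched; but on $A^c$—in particular on the region $D_{n+r-1}^c$ that survives to the end—the two maps apply the same alphabet map $f_{X_i}$ symbol by symbol. Since $y^1\in X$, every stage is a genuine config-fold of a configuration in the respective $X_i$ (no boundary-extension subtlety intervenes), so the composed map restricted to $D_{n+r-1}^c$ literally equals $F_X$ restricted to $D_{n+r-1}^c$; the same for $z^2$ on $D_{n-r+1}$.

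\textbf{Main obstacle.} The delicate point is not any single application of Proposition \ref{prop: folding_fixing_a_set} but the synchronization of two indices as we iterate: the fold index $i$ (which picks the map $f_{X_i}$) and the radius index (which picks the fixing ball $D_{n+i}$). One must check that at stage $i$ the configuration fed in really does lie in $\L(X_i)$ on the annular region adjacent to the current fixing set, so that the proposition's boundary hypothesis $y|_{A^c\cup\partial(A^c)}\in\L(X_i)$ is satisfied; this is where the exact matching of the radius increments with the $+1$/$-1$ in the boundary identities $D_k\cup\partial D_k=D_{k+1}$ and $D_k^c\cup\partial(D_k^c)=D_{k-1}^c$ is essential, and an off-by-one slip would break the induction. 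Making that induction clean—stating the precise invariant ``after $i$ folds, the restriction to $D_{n+i}^c$ lies in $\L(X_i)$''—is the crux, and everything else follows by applying Proposition \ref{prop: folding_fixing_a_set} $r$ times.
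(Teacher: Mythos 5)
Your proposal is correct and is essentially the paper's own argument: the paper derives this proposition by noting the identities $D_k\cup\partial D_k=D_{k+1}$ and $D_k^c\cup\partial(D_k^c)=D_{k-1}^c$ and repeatedly applying Proposition \ref{prop: folding_fixing_a_set}, with exactly the index bookkeeping you describe, and the second claim follows as you say because the fixing maps are single-site maps agreeing with $f_{X_i}$ off the fixed set. No discrepancies to report.
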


Abusing the notation, in such cases we shall denote the configurations $z^1$ and $z^2$ by $I_{X, n}(y^1)$ and $O_{X,n}(y^2)$ respectively. Note that $I_{X, n}(y^1)|_{D_n}= y^1|_{D_n}$ and $O_{X,n}(y^2)|_{D_n^c}= y^2|_{D_n^c}$. Also, $O_{X,n}$ is a composition of maps of the form $f_{X,A}$ where $A^c$ is finite; there is a chain of pivots in $Y$ from $y$ to $O_{X,n}(y)$.
There are two kinds of stiff shifts which will be of interest to us: A configuration $x\in \A^{\Z^d}$ is called \emph{periodic} if there exists $n \in \N$ such that $\sigma^{n \m e_{i}}(x)=x$ for all $1\leq i \leq d$. A configuration $x\in X$ is called \emph{frozen} if its homoclinic class is a singleton. This notion coincides with the notion of frozen coloring in \cite{brightwell2000gibbs}. A subshift $X$ will be called \emph{frozen} if it consists of frozen configurations, equivalently $\Delta_X$ is the diagonal. A measure on $X$ will be called \emph{frozen} if its support is frozen. Note that any shift space consisting just of periodic configurations is frozen. All frozen nearest neighbour shifts of finite type are stiff: Suppose $X$ is a nearest neighbour shift of finite type which is not stiff. Then there is a symbol $v$ which can be config-folded to a symbol $w$. This means that any appearance of $v$ in a configuration $x\in X$ can be replaced by $w$. Hence the homoclinic class of $x$ is not a singleton. Therefore $X$ is not frozen.

\begin{prop}\label{proposition: periodicfoldentropy} Let $X$ be a nearest neighbour shift of finite type such that a sequence of config-folds starting from $X$ results in the orbit of a periodic configuration. Then every shift-invariant probability measure adapted to $X$ is fully supported.
\end{prop}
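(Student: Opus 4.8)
The plan is to exploit the two folding maps built in the excerpt. Let $Z$ denote the stiff shift obtained from $X$, which by hypothesis equals the orbit of a single periodic configuration; write $r$ for the fold-radius of $X$ and $F_X\colon X\longrightarrow Z$ for the associated folding factor map. Fix a shift-invariant probability measure $\mu$ adapted to $X$. Since full support is equivalent to $supp(\mu)=X$, it suffices to show that $\mu([w]_{D_n})>0$ for every $w\in X$ and every $n\in\N$. I would split the argument into a ``folding down'' step that places the small orbit $Z$ inside $supp(\mu)$, and an ``unfolding'' step that spreads this mass to every admissible pattern of $X$.

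First I would show $Z\subset supp(\mu)$. As $\mu$ is a probability measure, $supp(\mu)$ is nonempty, so pick $x\in supp(\mu)$. For each $n>r$ the Outward Fixing Map yields $O_{X,n}(x)\in X$, and by Proposition \ref{prop: folding_ to _ stiffness_fixing_a_set} (applied with $Y=X$) it satisfies $O_{X,n}(x)|_{D_{n-r+1}}=F_X(x)|_{D_{n-r+1}}$. Because $O_{X,n}$ is a composition of maps $f_{X,A}$ with $A^c$ finite, the configuration $O_{X,n}(x)$ differs from $x$ at only finitely many sites, so $(x,O_{X,n}(x))\in\Delta_X$; adaptedness then forces $O_{X,n}(x)\in supp(\mu)$. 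As $n\to\infty$ the balls $D_{n-r+1}$ exhaust $\Z^d$, so $O_{X,n}(x)\to F_X(x)$ in the product topology, and closedness of $supp(\mu)$ gives $F_X(x)\in supp(\mu)$. Since $F_X(x)\in Z$ and $Z$ is a single shift-orbit, shift-invariance of $supp(\mu)$ upgrades this to $Z\subset supp(\mu)$.

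The decisive second step transports this mass back up using the Inward Fixing Map. Fix $w\in X$ and $n$, and choose any $N\geq n$. Applying Proposition \ref{prop: folding_ to _ stiffness_fixing_a_set} with $Y=X$ and $y^1=w$ (the hypothesis $w|_{D_{N-1}^c}\in\L_{D_{N-1}^c}(X)$ being automatic) produces $I_{X,N}(w)\in X$ with $I_{X,N}(w)|_{D_N}=w|_{D_N}$ and $I_{X,N}(w)|_{D_{N+r-1}^c}=F_X(w)|_{D_{N+r-1}^c}$. The key observation is that $I_{X,N}(w)$ and $F_X(w)$ then agree off the finite set $D_{N+r-1}$, so $(I_{X,N}(w),F_X(w))\in\Delta_X$. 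Since $F_X(w)\in Z\subset supp(\mu)$ by the first step, adaptedness gives $I_{X,N}(w)\in supp(\mu)$; but $I_{X,N}(w)|_{D_n}=w|_{D_n}$, so the pattern $w|_{D_n}$ occurs in a configuration of $supp(\mu)$, that is, $\mu([w]_{D_n})>0$. As $w$ and $n$ are arbitrary, this yields $supp(\mu)=X$.

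I expect the main obstacle to be the first step, where one must verify two things: that the Outward Fixing Map keeps the configuration within its finite homoclinic class so that adaptedness genuinely applies, and that the limit of $O_{X,n}(x)$ is the single well-defined point $F_X(x)$ of $Z$ rather than drifting with $n$. Both follow from the fact that $F_X$ is a fixed factor map independent of $n$ together with the $D_{n-r+1}$-agreement recorded in Proposition \ref{prop: folding_ to _ stiffness_fixing_a_set}. The conceptual heart of the proof is the recognition in the second step that folding the \emph{interior} of an arbitrary $w$ all the way to stiffness produces a configuration homoclinic to the periodic image $F_X(w)$; this is precisely what allows the positive mass carried by the tiny orbit $Z$ to reach every globally admissible pattern of $X$.
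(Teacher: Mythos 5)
Your proof is correct, and it reorganizes the argument in a way that differs from the paper's. The paper proves the statement in one shot: using shift-invariance of $\mu$ and the fact that the stiff shift $Z$ is a single orbit, it picks $y\in supp(\mu)$ with $F_X(y)=F_X(x)$, and then \emph{glues} $I_{X,n}(x)|_{D_{n+r}}$ to $O_{X,n+2r-1}(y)|_{D_{n+r-1}^c}$ along the annulus $\partial D_{n+r-1}$, where both restrictions agree with the common folded image; the nearest-neighbour property certifies that the glued configuration $z$ lies in $X$, and $z$ is homoclinic to the support point $y$. You instead factor the proof through the intermediate claim $Z\subset supp(\mu)$, obtained by letting $n\to\infty$ in $O_{X,n}(x)$ for a single $x\in supp(\mu)$ and using closedness plus shift-invariance of the support, and then you observe that $I_{X,N}(w)$ is \emph{already} homoclinic to $F_X(w)\in Z$, so no gluing is required. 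Both routes rest on the same machinery (the Onion Peeling Proposition and adaptedness), but yours buys a cleaner modular structure --- the periodic orbit is exhibited explicitly as a reservoir of mass inside $supp(\mu)$, and the step where two configurations are spliced together disappears --- at the cost of an extra topological ingredient (convergence $O_{X,n}(x)\to F_X(x)$ in the product topology and closedness of the support) that the paper's purely finitary construction avoids. Your verification of the homoclinicity $(I_{X,N}(w),F_X(w))\in\Delta_X$ from $I_{X,N}(w)|_{D_{N+r-1}^c}=F_X(w)|_{D_{N+r-1}^c}$ is exactly the right observation and is where your argument departs most from the printed one.
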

\begin{prop}\label{proposition: frozenfoldpivot}
Let $X$ be a nearest neighbour shift of finite type such that a sequence of config-folds starting from $X$ results in a frozen shift. Then $X$ has the pivot property.
\end{prop}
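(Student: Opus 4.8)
The plan is to connect any homoclinic pair by pushing both configurations, via pivots, onto a \emph{single} common configuration obtained by folding a large central disc all the way down to the frozen shift; frozenness is exactly what forces the two folded images to coincide. Let $r$ be the fold-radius of $X$ and let $X=X_0,X_1,\dots,X_r=Z$ be a sequence of full config-folds terminating at the stiff shift $Z$, which by hypothesis is frozen (and is essentially unique by Proposition \ref{proposition:uniqueness of stiff shifts}). Fix a distinct pair $(x,y)\in\Delta_X$, choose $m$ with $x_\mi=y_\mi$ for all $\mi\notin D_m$, and set $n:=m+r$, so that the finite disagreement set is contained in $D_{n-r+1}$ and $n>r$.

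First I would apply the outward fixing map $O_{X,n}$ from the Onion Peeling Proposition (Proposition \ref{prop: folding_ to _ stiffness_fixing_a_set}) with $Y=X$. Since $O_{X,n}$ is a composition of maps of the form $f_{X_i,D_{n-i}^c}$, each of which fixes the infinite exterior and folds only a finite disc, the remark following that proposition produces chains of pivots in $X$ from $x$ to $O_{X,n}(x)$ and from $y$ to $O_{X,n}(y)$.

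The heart of the argument is then two observations about $O_{X,n}(x)$ and $O_{X,n}(y)$. First, each constituent map $f_{X_i,A}$ acts \emph{coordinatewise}: it applies the fixed alphabet map $f_{X_i}$ off $A$ and the identity on $A$. Hence the value of $O_{X,n}(x)$ at a site $\mi$ is a function of $x_\mi$ alone together with the number of folds that reach $\mi$, and the latter depends only on $\|\mi\|_1$. Consequently $O_{X,n}(x)$ and $O_{X,n}(y)$ already agree at every site where $x$ and $y$ agree, that is, off $D_m$. Second, at the remaining sites, which lie inside the fully folded disc $D_{n-r+1}$, these configurations take the values $F_X(x)$ and $F_X(y)$ respectively. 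Here I would invoke frozenness: $F_X(x)$ and $F_X(y)$ lie in $Z$ and, since $F_X$ is coordinatewise and $x,y$ agree off $D_m$, they agree off the finite set $D_m$ and are therefore homoclinic in $Z$; as $Z$ is frozen its homoclinic classes are singletons, forcing $F_X(x)=F_X(y)$. Combining the two observations yields $O_{X,n}(x)=O_{X,n}(y)$.

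Concatenating the pivot chain from $x$ to $O_{X,n}(x)$ with the reverse of the pivot chain from $y$ to $O_{X,n}(y)=O_{X,n}(x)$ then produces a chain of pivots in $X$ from $x$ to $y$, establishing the pivot property. I expect the only delicate point to be the bookkeeping in the third paragraph: one must verify carefully that $O_{X,n}$ is genuinely coordinatewise (so that agreement of $x$ and $y$ off $D_m$ transfers to their images) and that the disagreement set is swallowed by the fully folded disc $D_{n-r+1}$, so that the frozen-shift identity $F_X(x)=F_X(y)$ is precisely what closes the gap. Everything else follows mechanically from the Onion Peeling Proposition and the pivot-realizability of the finite-support folding maps.
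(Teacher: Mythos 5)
Your proposal is correct and follows essentially the same route as the paper: apply the outward fixing map $O_{X,n}$ (realizable by a chain of pivots), note that it is a single-block (coordinatewise) map so the two images agree wherever $x$ and $y$ do, and use frozenness of the terminal stiff shift to force $F_X(x)=F_X(y)$ on the fully folded disc, whence $O_{X,n}(x)=O_{X,n}(y)$ and the two pivot chains concatenate. The only difference is cosmetic indexing ($n=m+r$ versus the paper's $O_{X,n+r-1}$), so there is nothing further to flag.
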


\noindent\textbf{Examples:}
\begin{enumerate}
\item
$X:=\{0\}^{\Z^d}\cup \{1\}^{\Z^d}$ is a frozen shift space but not the orbit of a periodic configuration. Clearly the delta measure $\delta_{\{0\}^{\Z^d}}$ is a shift-invariant probability measure adapted to $X$ but not fully supported. A more non-trivial example of a nearest neighbour shift of finite type which is frozen but not the orbit of a periodic configuration is the set of the Robinson tilings $Y$ \cite{Robinson1971}. There are configurations in $Y$ which have the so-called ``fault lines''; they can occur at most once in a given configuration. Consequently for all shift-invariant probability measures on $Y$, the probability of seeing a fault line is zero. Thus no shift-invariant probability measure (and hence no adapted shift-invariant probability measure) on $Y$ is fully supported.

\item\label{Example: Safe Symbol}
Let $X$ be a shift space with a safe symbol $\star$. Then any symbol in $X$ can be config-folded into the safe symbol. By config-folding the symbols one by one, we obtain a fixed point $\{\star\}^{\Z^d}$. Thus any nearest neighbour shift of finite type with a safe symbol satisfies the hypothesis of both the propositions.

\item \label{Example: Folds to an edge}Suppose $\H$ is a graph which folds into a single edge (denoted by $Edge$) or a single vertex $v$ with a loop. Then the shift space $X_\H$ can be
config-folded to $X_{Edge}$ (which consists of two periodic configurations) or the fixed point $\{v\}^{\Z^d}$ respectively. In the latter case, the graph $\H$ is called \emph{dismantlable} \cite{nowwinkler}. Note that finite non-trivial trees and the graph $C_4$ fold into an edge. For dismantlable graphs $\H$, Theorem 4.1 in \cite{brightwell2000gibbs} implies the conclusions of Propositions \ref{proposition: periodicfoldentropy} and \ref{proposition: frozenfoldpivot} for $X_\H$ as well.
\end{enumerate}•

\begin{proof}[Proof of Proposition \ref{proposition: periodicfoldentropy}] Let $\mu$ be a shift-invariant probability measure adapted to $X$. To prove that $supp(\mu)= X$ it is sufficient to prove that for all $n\in \N$ and $x\in X$ that $\mu([x]_{D_n})>0$. Let $X_0=X$, $X_1$, $X_2$$,\ldots,$ $X_r$ be a sequence of full config-folds where $X_r:=\{ \sigma^{\m i_1}(p), \sigma^{\m i_2}(p),\ldots, \sigma^{\m i_{k-1}}(p) \}$ is the orbit of a periodic point. For any two configurations $z,w\in X$ there exists $\m i\in \Z^d$ such that $F_X(z)= F_X(\sigma^{\m i}(w)).$
Since $\mu$ is shift-invariant we can choose $y \in supp (\mu)$ such that $F_X(x)= F_X(y).$
Consider the configurations $I_{X,n}(x)$ and $O_{X,n+2r-1}(y)$. By Proposition \ref{prop: folding_ to _ stiffness_fixing_a_set} they satisfy the equations
\begin{eqnarray*}
I_{X,n}(x)|_{D_{n+r-1}^c}&=&F_X(x)|_{D_{n+r-1}^c}\text{ and }\\
O_{X,n+2r-1}(y)|_{D_{n+r}}&=&F_X(y)|_{D_{n+r}}.
\end{eqnarray*}
\noindent Then $I_{X,n}(x)|_{\partial D_{n+r-1}}= O_{X,n+2r-1}(y)|_{\partial D_{n+r-1}}$. Since $X$ is a nearest neighbour shift of finite type, the configuration $z$ given by
\begin{eqnarray*}
z|_{D_{n+r}}&:=&I_{X,n}(x)|_{D_{n+r}}\\
z|_{D_{n+r-1}^c}&:=&O_{X,n+2r-1}(y)|_{D_{n+r-1}^c}
\end{eqnarray*}
\noindent is an element of $X$. Moreover
\begin{eqnarray*}
z|_{D_{n}}&=&I_{X,n}(x)|_{D_{n}}=x|_{D_n}\\
z|_{D_{n+2r-1}^c}&=&O_{X,n+2r-1}(y)|_{D_{n+2r-1}^c}=y|_{D^c_{n+2r-1}}.
\end{eqnarray*}•
Thus $(y, z)\in \Delta_X$. Since $\mu$ is adapted we get that $z\in supp(\mu)$. Finally
$$\mu([x]_{D_n})=\mu([z]_{D_n})>0.$$
\end{proof}

Note that all the maps being discussed here, $f_X$, $f_{X,A}$, $F_X$, $I_{X,n}$ and $O_{X,n}$ are (not necessarily shift-invariant) single block maps, that is, maps $f$ where $\left(f(x)\right)_{\m i}$ depends only on $x_{\m i}$. Thus if $f$ is one such map and $x|_A= y|_A$ for some set $A\subset \Z^d$ then $f(x)|_A=f(y)|_A$; they map homoclinic pairs to homoclinic pairs.

\begin{proof}[Proof of Proposition \ref{proposition: frozenfoldpivot}] Let $X_0=X$, $X_1$, $X_2$$,\ldots,$ $X_r$ be a sequence of full config-folds where $X_r$ is frozen. Let $(x, y) \in \Delta_X$. Since $X_r$ is frozen, $F_{X}(x)= F_X(y)$. Suppose $x|_{D_n^c}= y|_{D_n^c}$ for some $n\in \N$. Then $O_{X,n+r-1}(x)|_{D_n^c}=O_{X,n+r-1}(y)|_{D_n^c}$. Also by Proposition \ref{prop: folding_ to _ stiffness_fixing_a_set},
$$O_{X, n+r-1}(x)|_{D_n}=F_X(x)|_{D_n}=F_X(y)|_{D_n}= O_{X, n+r-1}(y)|_{D_n}.$$
This proves that $O_{X,n+r-1}(x)=O_{X,n+r-1}(y)$. In fact it completes the proof since for all $z\in X$ there exists a chain of pivots in $X$ from $z$ to $O_{X,n+r-1}(z)$.
\end{proof}

\section{Universal Covers}\label{section:universal covers}
Most cases will not be as simple as in the proof of Propositions \ref{proposition: periodicfoldentropy} and \ref{proposition: frozenfoldpivot}. We wish to prove the conclusions of these propositions for hom-shifts $X_\H$ when $\H$ is a connected four-cycle free graph. Many ideas carry over from the proofs of these results because of the relationship of such graphs with their universal covers; we describe this relationship next. The results in this section are not original; look for instance in \cite{Stallingsgraph1983}. We mention them for completeness.

Let $\H$ be a finite connected graph with no self-loops. We denote by $d_\H$ the ordinary graph distance on $\H$ and by $D_\H(u)$, the \emph{ball of radius 1} around $u$. A graph homomorphism $\pi:\C\longrightarrow \H$ is called a \emph{covering map} if for some $n \in \N \cup \{\infty\}$ and all $u \in \H$, there exist disjoint sets $\{C_i\}_{i=1}^n\subset \C$ such that $\pi^{-1}\left(D_\H(u)\right)= \cup_{i=1}^n C_i $ and $\pi|_{C_i}: C_i\longrightarrow D_\H(u)$ is an isomorphism of the induced subgraphs for $1\leq i\leq n$. A \emph{covering space} of a graph $\H$ is a graph $\C$ such that there exists a covering map $\pi: \C\longrightarrow \H$.

A \emph{universal covering space} of $\H$ is a covering space of $\H$ which is a tree. Unique up to graph isomorphism \cite{Stallingsgraph1983}, these covers can be described in multiple ways. Their standard construction uses non-backtracking walks \cite{Angluin80}: A \emph{walk} on $\H$ is a sequence of vertices $(v_1, v_2, \ldots, v_n)$ such that $v_i\sim_\H v_{i+1}$ for all $1\leq i \leq n-1$. The \emph{length} of a walk $p=(v_1, v_2, \ldots, v_n)$ is $|p|=n-1$, the number of edges traversed on that walk. It is called \emph{non-backtracking} if $v_{i-1}\neq v_{i+1}$ for all $2\leq i \leq n-1$, that is, successive steps do not traverse the same edge. Choose a vertex $u \in \H$. The vertex set of the universal cover is the set of all non-backtracking walks on $\H$ starting from $u$; there is an edge between two such walks if one extends the other by a single step. The choice of the starting vertex $u$ is arbitrary; choosing a different vertex gives rise to an isomorphic graph. We denote the universal cover by $E_\H$. The covering map $\pi: E_\H\longrightarrow \H$ maps a walk to its terminal vertex. Usually, we will denote by $\t u, \t v$ and $\t w$ vertices of $E_\H$ such that $\pi(\t u)= u$, $\pi(\t v)= v$ and $\pi(\t w)= w$.

This construction shows that the universal cover of a graph is finite if and only if it is a finite tree. To see this if the graph has a cycle then the finite segments of the walk looping around the cycle give us infinitely many vertices for the universal cover. If the graph is a finite tree, then all walks must terminate at the leaves and their length is bounded by the diameter of the tree. In fact, the universal cover of a tree is itself while the universal cover of a cycle (for instance $C_4$) is $\Z$ obtained by finite segments of the walks $(1, 2, 3, 0, 1, 2, 3, 0, \ldots )$ and $(1, 0, 3, 2, 1, 0, 3, 2, \ldots )$.

Following the ideas of homotopies in algebraic topology, there is a natural operation on the set of walks: two walks can be joined together if one begins where the other one ends. More formally, given two walks $p=(v_1, v_2, \ldots, v_n)$ and $q=(w_1, w_2, \ldots, w_m)$ where $v_n=w_1$, consider $p\star q=(v_1, v_2, \ldots, v_n, w_2, w_3, \ldots, w_m)$. However even when $p$ and $q$ are non-backtracking $p\star q$ need not be non-backtracking. So we consider the walk $[p\star q]$ instead which erases the backtracking segments of $p \star q$, that is, if for some $i>1$, $v_{n-i+1}\neq w_{i}$ and $v_{n-j+1}=w_j$ for all $1\leq j \leq i-1$ then
$$[p\star q]:=(v_1, v_2, \ldots, v_{n-i+1}, w_{i-1}, w_{i}, \ldots, w_m).$$

This operation of erasing the backtracking segments is called \emph{reduction}, look for instance in \cite{Stallingsgraph1983}.
The following proposition is well-known (Section 4 of \cite{Stallingsgraph1983}) and shall be useful in our context as well:
\begin{prop}\label{proposition:isomorphism_of_universal_covering_space}
Let $\H$ be a finite connected graph without any self-loops. Then for all $\t{v}, \t w \in E_\H$ satisfying $\pi(\t v)= \pi(\t w)$ there exists a graph isomorphism $\phi: E_\H\longrightarrow E_\H$ such that $\phi(\t v)= \t w$ and $\pi \circ \phi = \pi$.
\end{prop}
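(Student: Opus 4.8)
The plan is to construct $\phi$ explicitly as a ``re-rooting'' map built from the reduction operation $[\,\cdot\star\cdot\,]$ and then to exhibit its inverse directly. Write $\t v$ and $\t w$ as the non-backtracking walks $(u=a_0,a_1,\ldots,a_n)$ and $(u=b_0,b_1,\ldots,b_m)$ from the fixed base vertex $u$; the hypothesis $\pi(\t v)=\pi(\t w)$ says exactly that $a_n=b_m=:x$. For a walk $p$ let $\bar p$ denote the same walk traversed backwards. For an arbitrary vertex $p\in E_\H$, that is, an arbitrary non-backtracking walk from $u$, define
$$\phi(p):=[\,\t w \star [\,\bar{\t v}\star p\,]\,].$$
Since $\bar{\t v}$ ends at $u$ where $p$ begins, and $\t w$ ends at $x$ where $[\bar{\t v}\star p]$ begins, both concatenations are legal, and $\phi(p)$ is again a non-backtracking walk beginning at $u$; hence $\phi$ maps $E_\H$ into itself.

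Next I would verify the two required identities. Reduction never changes the terminal vertex of a concatenation $[p\star q]$: the only vertices it erases form a backtracking segment straddling the junction, and because $\bar{\t v}$ literally retraces $\t v$, the cancellation in $[\bar{\t v}\star p]$ stops exactly at the last vertex of the common prefix of $p$ and $\t v$, leaving a walk that still ends at the terminal vertex of $p$; the same applies to the outer product. Therefore $\pi(\phi(p))$ equals the terminal vertex of $p$, i.e.\ $\pi\circ\phi=\pi$. Taking $p=\t v$, the inner product $[\bar{\t v}\star\t v]$ reduces completely to the trivial walk $(x)$, and then $[\t w\star(x)]=\t w$, so $\phi(\t v)=\t w$, as desired.

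It remains to show $\phi$ is a graph isomorphism. For bijectivity I would set $\psi(q):=[\,\t v\star[\,\bar{\t w}\star q\,]\,]$ and check, using confluence of reduction together with the facts that $[\bar{\t v}\star\t v]$ and $[\bar{\t w}\star\t w]$ are trivial, that $\psi\circ\phi=\mathrm{id}$ and $\phi\circ\psi=\mathrm{id}$. For adjacency I use that two vertices of $E_\H$ are adjacent precisely when one is obtained from the other by appending a single terminal vertex. The inner map $p\mapsto[\bar{\t v}\star p]$ carries a one-step extension to a one-step modification: if $q=[\bar{\t v}\star p]$ is already reduced, then by confluence $[\bar{\t v}\star(p\cdot y)]=[q\cdot y]$, which equals either $q\cdot y$ (a one-step extension) or $q$ with its last vertex deleted (a one-step contraction), according to whether appending $y$ creates a backtrack; either way adjacency is preserved, and the identical argument applies to the outer map $q\mapsto[\t w\star q]$. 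Composing, $\phi$ preserves adjacency, and together with the explicit inverse $\psi$ this makes $\phi$ the required isomorphism.

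The main obstacle is the bookkeeping in this last paragraph: confirming that reduction interacts with a single appended step exactly as claimed and that terminal vertices are never destroyed. Both points rest on $E_\H$ being a tree, so that between any two of its vertices there is a \emph{unique} reduced walk and the cancellations are forced; I would state the needed confluence of reduction as a preliminary remark rather than re-derive it. An alternative, less computational route is to invoke the standard fact that a simply connected (here, tree) cover is a regular cover whose deck transformation group acts transitively on each fiber $\pi^{-1}(x)$, so that $\phi$ is simply the deck transformation carrying $\t v$ to $\t w$; I would nevertheless keep the explicit construction above, since it is self-contained and stays within the combinatorial framework of non-backtracking walks already set up in this section.
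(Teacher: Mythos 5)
Your construction is correct and takes essentially the same route as the paper: by associativity of reduced concatenation, your map $\phi(p)=[\,\t w\star[\,\bar{\t v}\star p\,]\,]$ equals $[c\star p]$ for the reduced closed walk $c=[\t w\star\bar{\t v}]$ based at $u$, which is exactly the left-concatenation deck transformation the paper exhibits (illustrated there for the representative case $\t v=(u)$), with the same explicit inverse by pre-concatenating the reversed walk. The paper defers the detailed verification to the cited reference on graph coverings, so your additional bookkeeping on terminal vertices, confluence of reduction, and adjacency fills in rather than departs from its argument.
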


To see how to construct this isomorphism, consider as an example $ (u)$, the empty walk on $\H$ and $(v_1, v_2, \ldots, v_n)$, some non-backtracking walk such that $v_1=v_n=u$. Then the map $\phi: E_\H\longrightarrow E_\H$ given by
$$\phi(\t w):= [(v_1, v_2, \ldots, v_n) \star \t w].$$
is a graph isomorphism which maps $(u)$ to $(v_1, v_2, \ldots, v_n)$; its inverse is $\psi: E_\H\longrightarrow E_\H$ given by
$$\psi(\t w):= [(v_n, v_{n-1}, \ldots, v_1)\star \t w].$$

The maps $\phi, \pi$ described above give rise to natural maps, also denoted by $\phi$ and $\pi$ where $$\phi:X_{E_\H}\longrightarrow X_{E_\H}$$
is given by $\phi(\t x)_\mi := \phi(\t x_\mi)$ and
$$\pi: X_{E_\H} \longrightarrow X_{\H}$$
is given by $\pi(\t x)_\mi:=\pi(\t x_\mi)$ for all $\mi \in \Z^d$ respectively. A \emph{lift} of a configuration $x\in X_\H$ is a configuration $\t{x}\in X_{E_\H}$ such that $\pi \circ \t{x}= x$.

Now we shall analyse some consequences of this formalism in our context. More general statements (where $\Z^d$ is replaced by a different graph) are true (under a different hypothesis on $\H$), but we restrict to the four-cycle free condition. We noticed in Section \ref{section:Folding, Entropy Minimality and the Pivot Property} that if $\H$ is a tree then $X_{\H}$ satisfies the conclusions of Theorems \ref{theorem: MRF fully supported } and \ref{theorem: pivot property for four cycle free}. Now we will draw a connection between the four-cycle free condition on $\H$ and the formalism in Section \ref{section:Folding, Entropy Minimality and the Pivot Property}.

\begin{prop}[Existence of Lifts]\label{proposition:covering_space_lifting}
Let $\H$ be a connected four-cycle free graph. For all $x\in X_\H$ there exists $\tilde{x}\in X_{E_\H}$ such that $\pi(\tilde{x})=x$. Moreover the lift $\tilde{x}$ is unique up to a choice of $\tilde{x}_{\m 0}$.
\end{prop}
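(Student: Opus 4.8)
The plan is to construct $\t x$ by propagating a single chosen value $\t x_{\m 0}$ along lattice paths, using the four-cycle free hypothesis exactly to certify that this propagation is consistent. The first step is to extract the local lifting property implicit in the definition of a covering map: since $\pi\colon E_\H\longrightarrow \H$ is a covering map, for each $u\in\H$ the preimage $\pi^{-1}(D_\H(u))$ is a disjoint union of copies of $D_\H(u)$, each carried isomorphically onto $D_\H(u)$ by $\pi$. Consequently, for every $\t u\in E_\H$ with $\pi(\t u)=u$ and every neighbour $w\sim_\H u$ there is a \emph{unique} vertex $\t w\in E_\H$ with $\t w\sim_{E_\H}\t u$ and $\pi(\t w)=w$. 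I will call this the unique neighbour lifting property; it is the graph-theoretic counterpart of unique path lifting, and it makes every single step of a lift forced.

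Next comes the construction. Fix any $\t x_{\m 0}\in\pi^{-1}(x_{\m 0})$. Given $\m i\in\Z^d$, choose a lattice path $\m 0=\m{p}_0\sim\m{p}_1\sim\cdots\sim\m{p}_n=\m i$; its image $x_{\m{p}_0},x_{\m{p}_1},\ldots,x_{\m{p}_n}$ is a walk in $\H$, and lifting this walk step by step from $\t x_{\m 0}$ via the unique neighbour lifting property produces a walk in $E_\H$ whose terminal vertex I define to be $\t x_{\m i}$. Because each step is forced, uniqueness is automatic once $\t x_{\m 0}$ is fixed; and, granting well-definedness, $\t x$ is automatically a graph homomorphism, since for an edge $\m i\sim\m j$ one lifts a path to $\m i$ and extends it by one step, so $\t x_{\m j}$ is the neighbour of $\t x_{\m i}$ lying over $x_{\m j}$ and hence $\t x_{\m j}\sim_{E_\H}\t x_{\m i}$. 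Thus $\t x\in X_{E_\H}$ with $\pi(\t x)=x$, and the whole proposition reduces to path-independence.

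The heart of the argument — and the step I expect to be the main obstacle — is showing that $\t x_{\m i}$ does not depend on the chosen path. The Cayley complex of $\Z^d$ obtained by attaching a $2$-cell along each unit square (the standard commutator relators) is simply connected, so any two lattice paths with the same endpoints are related by finitely many elementary moves of two kinds: insertion or deletion of a backtrack $\cdots\m p\sim\m q\sim\m p\cdots$, and replacement of two consecutive sides of a unit square by the complementary two sides. A backtrack in $\H$ lifts to a backtrack in $E_\H$ by the unique neighbour lifting property, so such moves leave the terminal vertex of the lifted walk unchanged. For the square move it therefore suffices to verify that the $x$-image of every unit square lifts to a \emph{closed} walk in $E_\H$.

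This is exactly where four-cycle freeness is used. A unit square of $\Z^d$ maps under $x$ to a closed length-$4$ walk $a\sim_\H b\sim_\H c\sim_\H d\sim_\H a$, whose two diagonal pairs are $\{a,c\}$ and $\{b,d\}$. Since $\H$ has no self-loops, consecutive vertices are distinct; since $\H$ has no $C_4$, the four vertices cannot be pairwise distinct, forcing one diagonal pair to collapse, i.e.\ $a=c$ or $b=d$. In either case the walk is a concatenation of two backtracks based at $a$ (namely $a,b,a,d,a$ when $a=c$, and $a,b,c,b,a$ when $b=d$), so by the unique neighbour lifting property it lifts from $\t x_{\m i}$ to a closed walk. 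Hence the image of every unit square lifts to a closed walk, the square move preserves lifted terminal vertices, and $\t x_{\m i}$ is well-defined. Combined with the step-by-step forcing, this gives both the existence of $\t x$ and its uniqueness up to the choice of $\t x_{\m 0}$.
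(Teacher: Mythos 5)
Your proof is correct, and it reorganizes the argument in a genuinely different (more overtly topological) way than the paper, even though the decisive use of the hypothesis is the same. The paper constructs the lift by induction on the $l^1$-balls $D_n$: it extends $\t x^n$ to $D_{n+1}$ by lifting each new vertex through one chosen neighbour, and then verifies the homomorphism condition on every remaining edge $\m i \sim \m j'$ by locating a lattice square $\m i', \m j, \m i, \m j'$ and invoking four-cycle freeness to force $x_{\m i'}=x_{\m i}$ or $x_{\m j'}=x_{\m j}$ --- exactly your dichotomy $a=c$ or $b=d$ for the image of a unit square. You instead define $\t x_{\m i}$ by lifting along an arbitrary lattice path and prove path-independence by reducing to elementary moves (backtracks and square flips), which requires importing the standard fact that the Cayley $2$-complex of $\Z^d$ with square $2$-cells is simply connected; the paper's shell-by-shell induction quietly sidesteps that combinatorial homotopy lemma. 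In exchange, your formulation isolates the real mechanism (unique neighbour lifting plus triviality of the holonomy around the generating $2$-cells) and would apply verbatim to lifting homomorphisms from any graph whose square complex is simply connected, whereas the paper's argument is tied to the exhaustion of $\Z^d$ by balls. Both arguments deliver uniqueness the same way: every single step of the lift is forced once $\t x_{\m 0}$ is chosen.
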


\begin{proof}
We will begin by constructing a sequence of graph homomorphisms $\tilde{x}^n:D_n \longrightarrow E_\H$ such that $\pi \circ\tilde{x}^n =x|_{D_n}$ and $\tilde{x}^m|_{D_n}= \tilde{x}^n$ for all $m>n$. Then by taking the limit of these graph homomorphisms we obtain a graph homomorphism $\tilde{x}\in X_{E_\H}$ such that $\pi \circ\tilde{x}=x$. It will follow that given $\tilde{x}^0$ the sequence $\tilde{x}^n$ is completely determined proving that the lifting is unique up to a choice of $\tilde{x}_{\m {0}}$.

The recursion is the following: Let $\tilde{x}^n: D_n \longrightarrow E_\H$ be a given graph homomorphism for some $n\in \N\cup \{0\}$ such that $\pi \circ\tilde{x}^n=x|_{D_n}$. For any ${\m{ i}}\in D_{n+1}\setminus D_n$, choose a vertex ${\m{j}}\in D_n$ such that $\m{j}\sim \m{i}$. Then $\pi(\tilde{x}^n_{\m{j}})=x_{\m{j}}\sim x_{\m{i}}$. Since $\pi$ defines a local isomorphism between $E_\H$ and $\H$, there exists a unique vertex $\t v_{\m{i}}\sim \tilde{x}^n_{\m{j}} \in E_\H$ such that $\pi(\t v_{\m{i}})= x_{\m{i}}$. Define $\tilde{x}^{n+1}: D_{n+1}\longrightarrow E_\H$ by
\begin{equation*}
\tilde{x}^{n+1}_{\m{i}}:=\begin{cases}\tilde{x}^{n}_{\m{i}} &\text{if } \m{i}\in D_n\\\t v_{\m{i}} & \text{if } \m{i}\in D_{n+1}\setminus D_n.\end{cases}
\end{equation*}•
Then clearly $\pi \circ \tilde{x}^{n+1}= x|_{D_{n+1}}$ and $\tilde{x}^{n+1}|_{D_n}=\tilde{x}^n$. Note that the extension $\tilde{x}^{n+1}$ is uniquely defined given $\tilde{x}^n$.
We need to prove that this defines a valid graph homomorphism from $D_{n+1}$ to $E_\H$. Let $\m{i}\in D_{n+1}\setminus D_n$ and $\m{j}\in D_n$ be chosen as described above. Consider if possible any $\m{j}^\prime\neq \m{j} \in D_n$ such that $\m{j}^\prime \sim \m{i}$. To prove that $\tilde{x}^{n+1}$ is a graph homomorphism we need to verify that $\tilde{x}^{n+1}_{\m{j}^\prime}\sim \tilde{x}^{n+1}_{\m{i}}$.

Consider $\m{i}^\prime\in D_n$ such that $\m{i}^\prime\sim \m{j}$ and $\m{j}^\prime$. Then $\m{i}^\prime, \m{j}, \m{i}$ and $\m{j}^\prime$ form a four-cycle. Since $\H$ is four-cycle free either $x_{\m{i}^\prime}=x_{\m{i}}$ or $x_{\m{j}^\prime}= x_{\m{j}}$.

Suppose $x_{\m{i}^\prime}=x_{\m{i}}$; the other case is similar. Since $\pi$ is a local isomorphism and $\tilde{x}^{n+1}_{\m{i}},\tilde{x}^{n+1}_{\m{i^\p}} \sim \tilde{x}^{n+1}_{\m{j}}$, we get that $\tilde{x}^{n+1}_{\m{i}}=\tilde{x}^{n+1}_{\m{i}^\prime}$. But $\mi', \mj' \in D_n$ and $\t x^{n+1}|_{D_n}= \t x^{n}$ is a graph homomorphism; therefore $\tilde{x}^{n+1}_{\m{i}}=\tilde{x}^{n+1}_{\m{i}^\prime}\sim \tilde{x}^{n+1}_{\m{j}^\prime}$.
\end{proof}

\begin{corollary}\label{corollary:covering_space_lifting_homoclinic}
Let $\H$ be a connected four-cycle free graph and $x, y\in X_\H$. Consider some lifts $\tilde{x}, \tilde{y} \in X_{E_\H}$ such that $\pi(\tilde{x})= x $ and $\pi(\tilde{y})=y$. If for some $\m{i}_0 \in \Z^d$, $\tilde{x}_{\m{i}_0}= \tilde{y}_{\m{i}_0}$ then $\tilde{x}= \tilde{y}$ on the connected subset of
$$\{\m{j} \in \Z^d\:|\: x_{\m{j}}= y_{\m{j}}\}$$ which contains $\m{i}_0$.
\end{corollary}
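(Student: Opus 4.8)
We have $x,y\in X_\H$ with lifts $\tilde x,\tilde y$ agreeing at a single site $\m i_0$, and we want agreement of the lifts throughout the connected component of $\{\m j : x_{\m j}=y_{\m j}\}$ containing $\m i_0$.

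The plan is to argue by a path-connectivity/propagation argument, exploiting the local-isomorphism property of the covering map $\pi$, exactly the mechanism that made the lift unique in Proposition \ref{proposition:covering_space_lifting}. Let $S$ denote the connected subset of $\{\m j\in\Z^d : x_{\m j}=y_{\m j}\}$ containing $\m i_0$, and let $T:=\{\m j\in S : \tilde x_{\m j}=\tilde y_{\m j}\}$. I would show $T$ is all of $S$ by proving $T$ is both nonempty and relatively clopen in $S$ (equivalently, that $T$ is closed under taking a single lattice step within $S$); since $S$ is connected this forces $T=S$. Nonemptiness is immediate because $\m i_0\in T$ by hypothesis.

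First I would establish the key propagation step. Suppose $\m j\in T$ and $\m k\in S$ with $\m k\sim\m j$. Since $\m j,\m k\in S$ we have $x_{\m j}=y_{\m j}$ and $x_{\m k}=y_{\m k}$, and since $\m j\in T$ we have $\tilde x_{\m j}=\tilde y_{\m j}=:\tilde u$. Now $\tilde x_{\m k}\sim_{E_\H}\tilde u$ with $\pi(\tilde x_{\m k})=x_{\m k}$, and likewise $\tilde y_{\m k}\sim_{E_\H}\tilde u$ with $\pi(\tilde y_{\m k})=y_{\m k}=x_{\m k}$. But $\pi$ restricted to the ball $D_{E_\H}(\tilde u)$ is an isomorphism onto $D_\H(\pi(\tilde u))=D_\H(x_{\m j})$; in particular there is a \emph{unique} neighbour of $\tilde u$ in $E_\H$ mapping under $\pi$ to the vertex $x_{\m k}\sim_\H x_{\m j}$. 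Hence $\tilde x_{\m k}=\tilde y_{\m k}$, so $\m k\in T$. This is precisely the local-lifting uniqueness used in the recursion of the previous proof, now applied to compare two given lifts rather than to construct one.

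Finally I would close the argument: the propagation step shows that if one endpoint of an edge inside $S$ lies in $T$, then so does the other. Taking any $\m j\in S$ and a path $\m i_0=\m p_0,\m p_1,\ldots,\m p_m=\m j$ lying entirely in $S$ (which exists because $S$ is connected), induction on the path length gives $\m p_\ell\in T$ for every $\ell$, hence $\m j\in T$. Therefore $T=S$ and $\tilde x=\tilde y$ on $S$. I do not expect a genuine obstacle here: the only subtlety is making sure the comparison invokes the local isomorphism on the $E_\H$-ball around the common vertex $\tilde u$ (rather than around $\tilde x_{\m k}$), and that the four-cycle-free hypothesis is not needed for this corollary beyond what Proposition \ref{proposition:covering_space_lifting} already guarantees—the existence of the lifts themselves. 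The argument is really just the uniqueness half of lifting, localized and then propagated along paths in $S$.
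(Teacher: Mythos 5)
Your proof is correct and follows essentially the same route as the paper: both arguments identify the set where the lifts agree, note it is nonempty, and propagate agreement across a single lattice edge inside $S$ using the fact that $\pi$ is a local isomorphism on the ball around the common lifted vertex $\tilde u$. The paper phrases the connectedness step as a boundary-edge argument between $\tilde D$ and $D\setminus\tilde D$ rather than induction along a path, but this is only a cosmetic difference.
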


\begin{proof}
Let $D$ be the connected component of $\{\m{i} \in \Z^d \:|\: x_{\m{i}} = y_{\m{i}}\}$ and $\t D$ be the connected component of $\{\m{i} \in \Z^d \:|\: \t x_{\m{i}} = \t y_{\m{i}}\}$ which contain $\m{i}_0$.

Clearly $\t D \subset D$. Suppose $\t D\neq D$. Since both $D$ and $\t D$ are non-empty, connected sets there exist $\m{i} \in D \setminus \t D$ and $\m{j} \in \t D$ such that $\m{i} \sim \m{j}$. Then $x_{\m{i}}= y_{\m{i}}$, $x_{\m{j}}= y_{\m{j}}$ and $\t x_{\m{j}}= \t y_{\m{j}}$. Since $\pi $ is a local isomorphism, the lift must satisfy $\t x_{\m{i}} = \t y_{\m{i}}$ implying $\m{i} \in \t D$. This proves that $D= \t D$.
\end{proof}
The following corollary says that any two lifts of the same graph homomorphism are `identical'.
\begin{corollary}\label{corollary:lift_are_isomorphic}
Let $\H$ be a connected four-cycle free graph. Then for all $\t x^1,\t x^2 \in X_{E_\H}$ satisfying $\pi(\t x^1) = \pi(\t x^2)= x$ there exists an isomorphism $\phi: E_\H \longrightarrow E_\H$ such that $\phi\circ \t x^1= \t x^2$.
\end{corollary}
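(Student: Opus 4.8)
The plan is to reduce the statement to the two results immediately preceding it: the existence-and-homogeneity of deck transformations (Proposition \ref{proposition:isomorphism_of_universal_covering_space}) and the uniqueness of lifts (Proposition \ref{proposition:covering_space_lifting}, or equivalently Corollary \ref{corollary:covering_space_lifting_homoclinic}). The strategy is to first fix the discrepancy between the two lifts at a single site using an isomorphism of $E_\H$, and then to propagate agreement to all of $\Z^d$ by uniqueness.

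Concretely, I would start by comparing $\t x^1$ and $\t x^2$ at the origin. Since $\pi(\t x^1) = \pi(\t x^2) = x$, we have $\pi(\t x^1_{\m 0}) = \pi(\t x^2_{\m 0}) = x_{\m 0}$, so the two vertices $\t x^1_{\m 0}, \t x^2_{\m 0} \in E_\H$ lie over the same vertex of $\H$. Proposition \ref{proposition:isomorphism_of_universal_covering_space} then furnishes a graph isomorphism $\phi \colon E_\H \longrightarrow E_\H$ with $\phi(\t x^1_{\m 0}) = \t x^2_{\m 0}$ and $\pi \circ \phi = \pi$. As noted in the excerpt, such a $\phi$ induces a (single-block) map on $X_{E_\H}$, and since $\phi$ is a graph isomorphism the configuration $\phi \circ \t x^1$ again lies in $X_{E_\H}$. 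The key point is that $\pi \circ \phi = \pi$ forces $\pi(\phi \circ \t x^1) = \pi(\t x^1) = x$, so $\phi \circ \t x^1$ is once more a lift of the \emph{same} configuration $x$.

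It now remains to observe that $\phi \circ \t x^1$ and $\t x^2$ are two lifts of $x$ which agree at the origin, since $(\phi \circ \t x^1)_{\m 0} = \phi(\t x^1_{\m 0}) = \t x^2_{\m 0}$. Applying Corollary \ref{corollary:covering_space_lifting_homoclinic} with both underlying configurations equal to $x$, the set $\{\m j \in \Z^d \mid x_{\m j} = x_{\m j}\}$ is all of $\Z^d$, which is connected and contains $\m 0$; hence the two lifts agree everywhere, i.e. $\phi \circ \t x^1 = \t x^2$. (Equivalently, this is the uniqueness clause of Proposition \ref{proposition:covering_space_lifting}, which asserts that a lift is determined by its value at $\m 0$.) This yields exactly the required isomorphism $\phi$.

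I do not expect a genuine obstacle here, as the argument is a clean composition of the two cited results; the only points requiring care are bookkeeping rather than substance. Specifically, one must check that post-composition by the graph isomorphism $\phi$ sends $X_{E_\H}$ into itself and commutes with $\pi$ in the way claimed, and that the hypotheses of Corollary \ref{corollary:covering_space_lifting_homoclinic} are invoked with the two base configurations coinciding (so that the agreement set is the whole connected group $\Z^d$). Both are immediate from the definitions of the induced maps and of a lift.
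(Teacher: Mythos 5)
Your proposal is correct and follows essentially the same route as the paper: apply Proposition \ref{proposition:isomorphism_of_universal_covering_space} to obtain $\phi$ with $\phi(\t x^1_{\m 0})=\t x^2_{\m 0}$ and $\pi\circ\phi=\pi$, observe that $\phi\circ\t x^1$ is again a lift of $x$ agreeing with $\t x^2$ at the origin, and conclude by the uniqueness clause of Proposition \ref{proposition:covering_space_lifting}. The paper's proof is exactly this three-line argument, so no further comparison is needed.
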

\begin{proof} By Proposition \ref{proposition:isomorphism_of_universal_covering_space} there exists an isomorphism
$\phi: E_\H\longrightarrow E_\H$ such that $\phi(\t x^1_{\m{0}})= \t x^2_{\m{0}}$ and $\pi \circ \phi = \pi$. Then $(\phi\circ\t x^1)_{\m{0}} = \t x^2_{\m{0}}$ and $\pi (\phi\circ\t x^1)= (\pi \circ \phi)(\t x^1)= \pi (\t x^1)= x$. By Proposition \ref{proposition:covering_space_lifting} $\phi\circ\t x^1= \t x^2$.
\end{proof}

It is worth noting at this point the relationship of the universal cover described here with the universal cover in algebraic topology. Undirected graphs can be identified with $1$ dimensional CW-complexes where the set of vertices correspond to the $0$-cells, the edges to the $1$-cells of the complex and the attaching map sends the end-points of the edges to their respective vertices. With this correspondence in mind the (topological) universal covering space coincides with the (combinatorial) universal covering space described above; indeed a $1$ dimensional CW-complex is simply connected if and only if it does not have any loops, that is, the corresponding graph does not have any cycles; it is a tree. The uniqueness, existence and many such facts about the universal covering space follow from purely topological arguments; for instance look in Chapter $13$ in \cite{MunkresTopology75} or Chapters $5$ and $6$ in \cite{Masseyanintroduction1977}.

\section{Height Functions and Sub-Cocycles}\label{Section:heights}
Existence of lifts as described in the previous section enables us to measure the `rigidity' of configurations. In this section we define height functions and subsequently the slope of configurations, where steepness corresponds to this `rigidity'. The general method of using height
functions is usually attributed to J.H.Conway \cite{ThurstontilinggroupAMM}.

Fix a connected four-cycle free graph $\H$. Given $x\in X_\H$ we can define the corresponding \emph{height function} $h_x:\Z^d\times \Z^d\longrightarrow \Z$ given by $h_x({\m{i}},{\m{j}}):=d_{E_\H}(\tilde{x}_{\m{i}},\tilde{x}_{\m{j}} )$ where $\tilde{x}$ is a lift of $x$. It follows from Corollary \ref{corollary:lift_are_isomorphic} that $h_x$ is independent of the lift $\tilde{x}$.

Given a finite subset $A\subset \Z^d$ and $x\in X_\H$ we define the \emph{range of $x$ on A} as
\begin{equation*}
Range_A(x):=\max_{\mj_1, \mj_2\in A} h_x(\mj_1, \mj_2).
\end{equation*}
For all $x\in X_\H$
\begin{equation*}
Range_A(x)\leq Diameter(A)
\end{equation*}
and more specifically
\begin{equation} \label{equation:diameter_bounds_height}
Range_{D_n}(x)\leq 2n
\end{equation}
for all $n \in \N$. Since $\t x\in X_{E_\H}$ is a map between bipartite graphs it preserves the parity of the distance function, that is, if $\m i, \m j \in \Z^d$ and $x\in X_\H$ then the parity of $\|\m i - \m j\|_1$ is the same as that of $h_x(\m i, \m j)$. As a consequence it follows that $Range_{\partial D_n}(x)$ is even for all $x\in X_{\H}$ and $n \in \N$. We note that
$$Range_{A}(x)= Diameter(Image(\t x|_{A})).$$

The height function $h_x$ is subadditive, that is,
$$h_x(\m i,\m j)\leq h_x(\m i, \m k)+ h_x(\m k, \m j)$$
for all $x\in X_\H$ and $\m i ,\m j$ and $\m k \in \Z^d$. This is in contrast with the usual height function (as in \cite{chandgotia2013Markov} and \cite{peled2010high}) where there is an equality instead of the inequality. This raises some technical difficulties which are partly handled by the subadditive ergodic theorem.

The following terminology is not completely standard: Given a shift space $X$ a \emph{sub-cocycle} is a measurable map $c: X\times \Z^d \longrightarrow \N\cup \{0\}$ such that for all $\m i, \m j \in \Z^d$
$$c(x, \m i +\m j)\leq c(x, \m i)+ c(\sigma^{\m i}(x), \m j).$$
Sub-cocycles arise in a variety of situations; look for instance in \cite{Hammersleyfirst1965}. We are interested in the case $c(x, \m i)= h_x(\m 0, \m i)$ for all $x\in X_\H$ and $\m i \in \Z^d$. The measure of `rigidity' lies in the asymptotics of this sub-cocycle, the existence of which is provided by the subadditive ergodic theorem. Given a set $X$ if $f: X\longrightarrow \R$ is a function then let $f^+:=\max(0,f)$.

\begin{thm}[Subadditive Ergodic Theorem]\label{theorem:Subadditive_ergodic_theorem}\cite{walters-book}
Let $(X, \B, \mu)$ be a probability space and let $T: X\longrightarrow X$ be measure preserving. Let $\{f_n\}_{n=1}^\infty$ be a sequence of measurable functions $f_n: X\longrightarrow \R\cup \{-\infty\}$ satisfying the conditions:
\begin{enumerate}[(a)]
\item
$f_1^+ \in L^1(\mu)$
\item
for each $m$, $n \geq 1$, $f_{n+m }\leq f_n + f_m \circ T^n$ $\mu$-almost everywhere.
\end{enumerate}•
Then there exists a measurable function $f: X\longrightarrow \R\cup \{-\infty \}$ such that $f^+\in L^1(\mu)$, $f\circ T=f$, $\lim_{n\rightarrow \infty} \frac{1}{n}f_n =f$, $\mu$-almost everywhere
and
$$\lim_{n \longrightarrow \infty}\frac{1}{n}\int f_n d\mu = \inf_{n}\frac{1}{n}\int f_n d \mu= \int f d\mu.$$
\end{thm}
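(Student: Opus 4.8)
The plan is to establish Kingman's theorem in two movements: first identify the number $\gamma := \lim_n \frac{1}{n}\int f_n\, d\mu = \inf_n \frac{1}{n}\int f_n\, d\mu$ through the integrals alone, and then upgrade this to almost-everywhere convergence by squeezing the pointwise $\liminf$ and $\limsup$ against $\gamma$ from opposite sides. For the integral part I would first check by induction that $f_n^+ \in L^1(\mu)$: applying condition (b) with indices $n-1$ and $1$ gives $f_n \le f_{n-1} + f_1\circ T^{n-1}$, hence $f_n^+ \le f_{n-1}^+ + f_1^+\circ T^{n-1}$, so by measure-preservation $\int f_n^+\, d\mu \le n\int f_1^+\, d\mu < \infty$. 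Thus $a_n := \int f_n\, d\mu \in [-\infty,\infty)$ is well-defined, and subadditivity together with $\int f_m\circ T^n\, d\mu = \int f_m\, d\mu$ gives $a_{n+m}\le a_n + a_m$, so Fekete's subadditive lemma yields the existence of $\gamma = \lim_n a_n/n = \inf_n a_n/n$.

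Next I would set $\underline{f} := \liminf_n \frac{1}{n}f_n$ and $\bar f := \limsup_n \frac{1}{n}f_n$. Iterating condition (b) as $f_n \le \sum_{k=0}^{n-1} f_1\circ T^k$ and invoking Birkhoff's theorem shows $\bar f \le \mathbb{E}[f_1\mid \mathcal I]$, so in particular $\bar f^+ \in L^1(\mu)$. Both $\underline f$ and $\bar f$ are $T$-invariant: dividing $f_{1+n} \le f_1 + f_n\circ T$ by $n+1$ and taking $\limsup$ (using $\tfrac{1}{n}f_1\circ T^n \to 0$ a.e., valid since $f_1 < \infty$ a.e.) gives $\bar f \le \bar f\circ T$, and measure-preservation upgrades this to equality a.e. (by truncating at $-M$ to get integrable functions); likewise for $\underline f$. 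The two inequalities I then want are $\int \bar f\, d\mu \le \gamma$ and $\gamma \le \int \underline f\, d\mu$. Since $\underline f \le \bar f$ pointwise, these force $\int(\bar f - \underline f)\, d\mu = 0$ with $\bar f - \underline f \ge 0$, hence $\bar f = \underline f$ a.e.; the common value $f$ is the $T$-invariant limit, with $\int f\, d\mu = \gamma$ (the degenerate case $\gamma = -\infty$ being handled separately, where both bounds force $\bar f = \underline f = -\infty$ a.e.).

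The upper bound $\int \bar f\, d\mu \le \gamma$ is the gentler half. For fixed $m$, iterating condition (b) along multiples of $m$ gives $f_{km} \le \sum_{i=0}^{k-1} f_m\circ T^{im}$; dividing by $km$ and applying Birkhoff's theorem to $\tfrac{1}{m}f_m$ under the measure-preserving map $T^m$ (note $a_m \ge m\gamma > -\infty$, so $f_m \in L^1$ when $\gamma > -\infty$) yields $\bar f \le \mathbb{E}[\tfrac{1}{m}f_m \mid \mathcal I_{T^m}]$ once the remainder terms from writing $n = km + r$ are absorbed (they vanish after division by $n$, since $\tfrac{1}{n}f_1^+\circ T^n \to 0$ a.e.). Integrating and using that conditional expectation preserves the integral gives $\int \bar f\, d\mu \le a_m/m$ for every $m$, so $\int \bar f\, d\mu \le \inf_m a_m/m = \gamma$.

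The main obstacle is the reverse inequality $\gamma \le \int \underline f\, d\mu$, which I would prove by the Katznelson--Weiss stopping-time decomposition. Fixing $\epsilon > 0$ and working in the case $\gamma > -\infty$, define the stopping time $\tau(x) := \min\{n\ge 1 : f_n(x) \le n(\underline f(x)+\epsilon)\}$, which is finite a.e. by the definition of $\liminf$. Choosing $L$ with $\mu(\tau > L)$ small, I would partition the orbit segment $\{0,1,\ldots,N-1\}$ into blocks: at an index $j$ with $\tau(T^j x)\le L$, take a block of length $\tau(T^j x)$ and bound its contribution by $f_{\tau(T^jx)}(T^j x) \le \tau(T^j x)\,(\underline f(x)+\epsilon)$, using the $T$-invariance of $\underline f$; at indices in the bad set $\{\tau > L\}$, advance by single steps, bounding each contribution by $f_1^+$. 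Summing these through subadditivity and integrating produces $\tfrac{a_N}{N} \le \int \underline f\, d\mu + \epsilon + \mathrm{Err}(L,N)$, where the error is controlled by $\mu(\tau > L)$ and $\|f_1^+\|_1$; letting $N\to\infty$, then $L\to\infty$, then $\epsilon\to 0$ gives $\gamma \le \int \underline f\, d\mu$. I expect the delicate points to be the accounting of the incomplete block abutting $N$ and the uniform control of the bad-set error, which is where the argument demands the most care.
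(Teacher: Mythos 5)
The paper does not prove this statement at all: it is quoted, with the citation \cite{walters-book}, as a black box (Kingman's subadditive ergodic theorem) and is used only to derive the existence of slopes in Proposition \ref{prop:existence_of_slopes}. So there is no internal argument to compare yours against; you have written a proof of the cited classical theorem itself.

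On its own terms, your outline is the standard Katznelson--Weiss proof and its architecture is sound: the induction giving $f_n^+\in L^1$, Fekete for $\gamma=\inf_n a_n/n$, the $T$-invariance of $\underline f$ and $\bar f$ via truncation, the upper bound $\int\bar f\,d\mu\le a_m/m$ from Birkhoff applied to $T^m$, and the stopping-time decomposition for the reverse inequality are all the right steps. One technical point deserves more care than your sketch gives it: with $\tau(x):=\min\{n\ge 1: f_n(x)\le n(\underline f(x)+\epsilon)\}$ as written, the set where $\underline f=-\infty$ is not handled --- there the defining condition reads $f_n(x)\le-\infty$ and $\tau$ need not be finite, and restricting to ``the case $\gamma>-\infty$'' does not a priori exclude $\underline f=-\infty$ on a set of positive measure (indeed, ruling that out is part of what the inequality $\gamma\le\int\underline f\,d\mu$ is proving). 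The standard fix is to run the stopping-time argument with $\underline f$ replaced by $\max(\underline f,-M)$, obtain $\gamma\le\int\max(\underline f,-M)\,d\mu+\epsilon+\mathrm{Err}$, and then let $M\to\infty$ by monotone convergence. With that amendment, and the bookkeeping for the incomplete terminal block that you already flag, the proof goes through.
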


Given a direction $\m{ i} =(i_1, i_2, \ldots, i_d)\in \R^d$ let $\lfloor\m{ i}\rfloor=(\lfloor i_1\rfloor, \lfloor i_2\rfloor, \ldots, \lfloor i_d\rfloor)$. We define for all $x \in X_\H$ the \emph{slope of $x$ in the direction $\m{ i}$} as
$$sl_{\m {i}}(x):= \lim_{n \longrightarrow \infty}\frac{1}{n} h_x(\m 0, \lfloor n \m{ i}\rfloor)$$
whenever it exists.

If $\m i\in \Z^d$ we note that the sequence of functions $f_n: X_\H\longrightarrow \N\cup \{\m 0\}$ given by
$$f_n(x)=h_x(\m 0, n\m i)$$
satisfies the hypothesis of this theorem for any shift-invariant probability measure on $X_\H$: $|f_1|\leq \|\m i\|_1$ and the subadditivity condition in the theorem is just a restatement of the sub-cocycle condition described above, that is, if $T= \sigma^{\m i}$ then
$$f_{n+m }(x)= h_x(\m 0, (n+m)\m i)\leq h_x(\m 0, n \m i)+ h_{\sigma^{n \m i}x}(\m 0,m \m i ) =f_n(x) + f_m(T^n(x)).$$
The asymptotics of the height functions (or more generally the sub-cocycles) are a consequence of the subadditive ergodic theorem as we will describe next. In the following by an ergodic measure on $X_\H$, we mean a probability measure on $X_\H$ which is ergodic with respect to the $\Z^d$-shift action on $X_\H$.

\begin{prop}[Existence of Slopes]\label{prop:existence_of_slopes}
Let $\H$ be a connected four-cycle free graph and $\mu$ be an ergodic measure on $X_\H$. Then for all $\m{ i}\in \Z^d$
$$sl_{\m {i}}(x)=\lim_{n \longrightarrow \infty}\frac{1}{n} h_x({\m{0}}, n \m{ i})$$
exists almost everywhere and is independent of $x$. Moreover if $\m {i}= (i_1, i_2\ldots, i_d)$ then
$$sl_{\m {i}}(x)\leq \sum_{k=1}^d |i_k| sl_{\m {e}_k}(x).$$
\end{prop}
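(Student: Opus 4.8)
The plan is to invoke the Subadditive Ergodic Theorem (Theorem \ref{theorem:Subadditive_ergodic_theorem}) twice: once to establish existence and $x$-independence of $sl_{\m i}(x)$, and once more, together with a discrete-path subadditivity argument, to obtain the stated bound. For the first part, fix $\m i \in \Z^d$ and set $T = \sigma^{\m i}$ and $f_n(x) = h_x(\m 0, n\m i)$. As the excerpt already verifies, $f_1$ is bounded by $\|\m i\|_1$ (so $f_1^+ \in L^1$), and the sub-cocycle property gives $f_{n+m} \le f_n + f_m \circ T^n$ pointwise, hence $\mu$-almost everywhere. The theorem then produces a $T$-invariant limit function $f = \lim_n \frac1n f_n$ almost everywhere. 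The subtlety is that $T$-invariance alone does not give that $f$ is constant; I need invariance under the full $\Z^d$-action. Since $\mu$ is ergodic for the $\Z^d$-action, it suffices to observe that $f$ is almost everywhere invariant under each generator $\sigma^{\m e_k}$: indeed $h_{\sigma^{\m e_k}x}(\m 0, n\m i) = h_x(\m e_k, \m e_k + n\m i)$, and by the triangle (sub/super-additivity) estimate $|h_x(\m e_k, \m e_k + n\m i) - h_x(\m 0, n\m i)| \le h_x(\m 0, \m e_k) + h_x(n\m i, \m e_k + n\m i) \le 2$, so dividing by $n$ and letting $n \to \infty$ shows $f \circ \sigma^{\m e_k} = f$ a.e. Therefore $f$ is $\Z^d$-invariant, hence constant almost everywhere by ergodicity; this constant is $sl_{\m i}(x)$.

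For the inequality $sl_{\m i}(x) \le \sum_{k=1}^d |i_k|\, sl_{\m e_k}(x)$, the idea is to bound $h_x(\m 0, \lfloor n\m i\rfloor)$ by walking from $\m 0$ to $\lfloor n\m i\rfloor$ along axis-parallel steps and applying subadditivity of $h_x$ along the way. Concretely, write $\lfloor n\m i\rfloor = \sum_{k=1}^d m_k^{(n)} \m e_k$ with $m_k^{(n)} = \lfloor n i_k\rfloor$, and insert the intermediate lattice points $\m p_0 = \m 0, \m p_1, \ldots, \m p_d = \lfloor n\m i\rfloor$ obtained by adjusting one coordinate at a time. Subadditivity of $h_x$ gives
\begin{equation*}
h_x(\m 0, \lfloor n\m i\rfloor) \le \sum_{k=1}^d h_x(\m p_{k-1}, \m p_k),
\end{equation*}
and each term is a height along $|m_k^{(n)}|$ steps in the $\m e_k$ direction, starting from the shifted configuration $\sigma^{\m p_{k-1}}(x)$. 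By shift-invariance of the height function and the translation relation $h_x(\m p_{k-1}, \m p_k) = h_{\sigma^{\m p_{k-1}}x}(\m 0, m_k^{(n)}\m e_k)$, each such term divided by $n$ converges to $|i_k|\, sl_{\m e_k}$ almost everywhere. Here I use that the sign of $m_k^{(n)}$ is eventually constant (equal to $\operatorname{sgn} i_k$) and that $\frac{1}{n}|m_k^{(n)}| \to |i_k|$, together with the already-established almost-everywhere convergence and constancy of the one-directional slopes $sl_{\pm \m e_k}$; note $sl_{-\m e_k} = sl_{\m e_k}$ since $h_x$ is symmetric in its two arguments.

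The main obstacle I anticipate is the book-keeping in the second part: the intermediate point $\m p_{k-1}$ depends on $n$, so the configuration $\sigma^{\m p_{k-1}}x$ at which the $k$-th height is evaluated is $n$-dependent, and one cannot directly quote almost-everywhere convergence at a moving base point. The clean way around this is to fix the order of coordinate directions and note that for each $k$ the relevant quantity is $\frac1n h_{\sigma^{\m p_{k-1}(n)}x}(\m 0, m_k^{(n)}\m e_k)$; since $sl_{\m e_k}$ is a constant (independent of the starting configuration after passing to the ergodic limit), it is enough to control this along the single direction $\m e_k$ uniformly over base points. One establishes this by applying the Subadditive Ergodic Theorem separately in each direction $\m e_k$ to conclude the limit is the constant $sl_{\m e_k}$ almost everywhere and in $L^1$, and then controlling the base-point shift exactly as in the first paragraph, where moving the base point by a bounded-per-coordinate amount changes the height by a uniformly bounded amount that is killed after dividing by $n$. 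Taking the limit superior as $n \to \infty$ in the displayed inequality then yields $sl_{\m i}(x) \le \sum_{k=1}^d |i_k|\, sl_{\m e_k}(x)$ almost everywhere, completing the proof.
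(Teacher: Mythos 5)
Your first half (existence and constancy of $sl_{\m i}$ via Kingman's theorem applied to $f_n(x)=h_x(\m 0,n\m i)$ with $T=\sigma^{\m i}$, then upgrading $T$-invariance of the limit to full $\Z^d$-invariance by the estimate $|f_n(\sigma^{\m j}x)-f_n(x)|\leq 2\|\m j\|_1$ for fixed $\m j$) is correct and is exactly the paper's argument. The gap is in the second half. You correctly identify the obstacle -- the intermediate base point $\m p_{k-1}(n)$ moves with $n$ -- but the resolution you offer does not work: you propose "controlling the base-point shift exactly as in the first paragraph, where moving the base point by a bounded-per-coordinate amount changes the height by a uniformly bounded amount that is killed after dividing by $n$." Here the base point is shifted by $\m p_{k-1}(n)$, whose $\ell^1$-norm grows like $n\sum_{l<k}|i_l|$, so the triangle estimate only gives $|h_{\sigma^{\m p_{k-1}(n)}x}(\m 0,m_k^{(n)}\m e_k)-h_x(\m 0,m_k^{(n)}\m e_k)|\leq 2\|\m p_{k-1}(n)\|_1=O(n)$, which is not killed after dividing by $n$. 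Consequently you have not established almost-everywhere convergence of the individual terms $\frac1n h_{\sigma^{\m p_{k-1}(n)}x}(\m 0,m_k^{(n)}\m e_k)$, and the final "taking the limit superior" step, which is a pointwise a.e.\ operation, is unjustified.

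The paper avoids this entirely by integrating: since all the slopes are a.e.\ constant, it suffices to compare the constants, and $sl_{\m i}=\lim_n\frac1n\int h_x(\m 0,n\m i)\,d\mu\leq\sum_k\lim_n\frac1n\int h_{\sigma^{n\m i^{(k-1)}}x}(\m 0,ni_k\m e_k)\,d\mu=\sum_k\lim_n\frac1n\int h_x(\m 0,ni_k\m e_k)\,d\mu$, where the moving base point disappears because $\mu$ is shift-invariant and one is integrating; the limits of the integrals are supplied by the last clause of the Subadditive Ergodic Theorem. Your sketch could be repaired along similar lines -- e.g.\ by noting that $\frac1n f_n\to sl_{\m e_k}$ in $L^1$ (Scheff\'e, since the functions are nonnegative and the integrals converge), whence $\frac1n h_{\sigma^{\m p_{k-1}(n)}x}(\m 0,m_k^{(n)}\m e_k)\to|i_k|\,sl_{\m e_k}$ in $L^1$ by shift-invariance of $\mu$ and constancy of the limit, and then taking $L^1$ limits (or an a.e.\ convergent subsequence) in the pointwise inequality -- but as written the argument for the key inequality is not complete.
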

\begin{proof}
Fix a direction $\m{ i}\in \Z^d$. Consider the sequence of functions $\{f_n\}_{n=1}^\infty$ and the map $T: X_\H\longrightarrow X_\H$ as described above. By the subadditive ergodic theorem there exists a function $f: X_\H\longrightarrow \R\cup \{-\infty\}$ such that
$$\lim_{n \rightarrow \infty}\frac{1}{n}f_n=f\ almost\ everywhere.$$
Note that $f= sl_{\m{i}}$. Since for all $x\in X_\H$ and $n \in\N$, $0\leq f_n\leq n\|\mi\|_1$, $0\leq f(x)\leq \|\vec i\|_1$ whenever it exists. Fix any $\m{j}\in \Z^d$. Then
\begin{eqnarray*}
f_n(\sigma^{\m{j}}(x))&=& h_{\sigma^{\m{j}}(x)}({\m{0}}, n \m{i})= h_x(\m{j}, n \m{ i}+\m{ j})
\end{eqnarray*}
and hence
\begin{eqnarray*}
-h_x(\m{j}, {\m{0}}) + h_x({\m{0}}, n \m{i})- h_x(n \m{i}, n \m{i}+\m{j})&\leq& f_n(\sigma^{\m{j}}(x))\\
&\leq& h_x(\m{j},{\m{0}}) + h_x({\m{0}}, n \m{i})+ h_x(n \m{i}, n \m{i}+\m{j})
\end{eqnarray*}
implying
\begin{eqnarray*}
-2\|\m{j}\|_1+ f_n(x)\leq & f_n(\sigma^{\m{j}}(x))& \leq 2\|\m{j}\|_1+ f_n(x)
\end{eqnarray*}•
implying
$$f(x)=\lim_{n \longrightarrow \infty} \frac{1}{n} f_n(x)= \lim_{n \longrightarrow \infty}\frac{1}{n} f_n(\sigma^{\m{j}} x)= f(\sigma^{\m{j}}(x))$$
almost everywhere. Since $\mu$ is ergodic $sl_{\mi}= f$ is constant almost everywhere. Let $\m{i}^{(k)} = (i_1, i_2, \ldots, i_k, 0, \ldots, 0)\in \Z^d$. By the subadditive ergodic theorem
\begin{eqnarray*}
sl_{\m{i}}(x)= \int sl_{\m{i}}(x) d\mu&=& \lim_{n \longrightarrow \infty}\frac{1}{n}\int h_x({\m{0}}, n \m{i}) d\mu\\
&\leq&\sum_{k=1}^d \lim_{n \longrightarrow\infty}\frac{1}{n} \int h_{\sigma^{n \m{i}^{(k-1)}}(x)}({\m{0}}, ni_{k}\m{e}_k ) d \mu\\
&=&\sum_{k=1}^d \lim_{n \longrightarrow\infty}\frac{1}{n} \int h_x({\m{0}}, ni_{k}\m{e}_k ) d \mu\\
&\leq&\sum_{k=1}^d|i_k|\lim_{n \longrightarrow\infty}\frac{1}{n} \int h_x({\m{0}}, n\m{e}_k ) d \mu\\
&=&\sum_{k=1}^d |i_k| sl_{\m {e}_k}(x).
\end{eqnarray*}•	
almost everywhere.
\end{proof}
\begin{corollary}\label{corollary: existence_of _slopes_in_reality}
Let $\H$ be a connected four-cycle free graph. Suppose $\mu$ is an ergodic measure on $X_\H$. Then for all $\m{i}\in \R^d$
$$sl_{\m{i}}(x)=\lim_{n \longrightarrow \infty}\frac{1}{n} h_x({\m{0}}, \lfloor n \m{i}\rfloor)$$
exists almost everywhere and is independent of $x$. Moreover if $\m{i}= (i_1, i_2,\ldots, i_d)$ then
$$sl_{\m{i}}(x)\leq \sum_{k=1}^d |i_k| sl_{\m{e}_k}(x).$$
\end{corollary}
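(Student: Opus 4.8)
The plan is to bootstrap from the integer-direction statement of Proposition \ref{prop:existence_of_slopes} to rational directions, and then to all of $\R^d$ by a uniform continuity argument in the direction $\m i$. Throughout I will use the elementary Lipschitz estimate
$$|h_x(\m 0, \m a)- h_x(\m 0, \m b)|\leq h_x(\m a, \m b)\leq \|\m a - \m b\|_1,$$
valid for every $x\in X_\H$ and $\m a, \m b\in \Z^d$: the first inequality is the reverse triangle inequality for $d_{E_\H}$, and the second holds because $\t x$ is a graph homomorphism and therefore maps a monotone lattice path from $\m a$ to $\m b$ to a walk of the same length in $E_\H$.

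First I would treat rational directions. Fix $\m i\in \Q^d$ and write $\m i = \m p/q$ with $\m p \in \Z^d$, $q\in \N$. Writing $n= qm+r$ with $0\leq r<q$ gives $\lfloor n \m i\rfloor = m\m p + \lfloor (r/q)\m p\rfloor$, so $\|\lfloor n\m i\rfloor - m \m p\|_1\leq \|\m p\|_1$ uniformly in $n$. The Lipschitz estimate then yields
$$\frac 1n h_x(\m 0, \lfloor n\m i\rfloor) = \frac mn\cdot \frac 1m h_x(\m 0, m\m p)+O\!\left(\tfrac 1n\right),$$
and since $m/n\to 1/q$ and $\frac 1m h_x(\m 0, m\m p)\to sl_{\m p}(x)$ almost everywhere by Proposition \ref{prop:existence_of_slopes}, the limit $sl_{\m i}(x)$ exists almost everywhere, equals $\frac 1q sl_{\m p}(x)$, and is independent of $x$.

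Next I would pass to real directions. Because $\Q^d$ is countable, intersecting the countably many full-measure sets on which the rational slopes exist produces a single full-measure set on which $\m i\mapsto sl_{\m i}(x)$ is defined for every $\m i\in \Q^d$ and equals a deterministic function $s$. Applying the Lipschitz estimate before taking limits gives, for rational $\m i, \m i'$,
$$\left|\tfrac 1n h_x(\m 0, \lfloor n\m i\rfloor) - \tfrac 1n h_x(\m 0, \lfloor n\m i'\rfloor)\right|\leq \tfrac 1n\|\lfloor n\m i\rfloor - \lfloor n\m i'\rfloor\|_1\leq \|\m i - \m i'\|_1 + \tfrac dn,$$
whence $|s(\m i)- s(\m i')|\leq \|\m i - \m i'\|_1$, so $s$ extends uniquely to an $\ell^1$-Lipschitz function on $\R^d$. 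For an arbitrary $\m i\in\R^d$, approximating by a rational $\m i'$ with $\|\m i - \m i'\|_1<\varepsilon$ and using the same inequality to control $\frac 1n h_x(\m 0, \lfloor n\m i\rfloor)$ in terms of $\frac 1n h_x(\m 0, \lfloor n\m i'\rfloor)\to s(\m i')$ shows that the $\liminf$ and $\limsup$ of $\frac 1n h_x(\m 0, \lfloor n \m i\rfloor)$ both lie within $2\varepsilon$ of $s(\m i)$; letting $\varepsilon\to 0$ yields existence of $sl_{\m i}(x)=s(\m i)$ on the same full-measure set, independent of $x$. Finally, the inequality $sl_{\m i}(x)\leq \sum_{k=1}^d |i_k| sl_{\m e_k}(x)$ holds on $\Z^d$ by Proposition \ref{prop:existence_of_slopes}, hence on $\Q^d$ after the rescaling above, and both sides are continuous in $\m i$, so it persists on all of $\R^d$.

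The main obstacle I anticipate is bookkeeping the almost-everywhere qualifiers: each direction carries its own exceptional null set, and there are uncountably many real directions. The device that resolves this is to perform all the genuine limiting work on the countable set $\Q^d$, where a single countable intersection of null sets suffices, and then to exploit the uniform, direction-independent Lipschitz constant $1$ to extend simultaneously to every real direction without introducing any further exceptional sets.
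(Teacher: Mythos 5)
Your proposal is correct and follows essentially the same route as the paper: reduce a rational direction $\m i$ to the integer direction $q\m i$ by division with remainder, collect the countably many full-measure sets over $\Q^d$ into one, and then extend to all of $\R^d$ using the uniform estimate $\frac1n\|\lfloor n\m i\rfloor-\lfloor n\m j\rfloor\|_1\leq \|\m i-\m j\|_1+O(1/n)$, with the inequality against $\sum_k|i_k|sl_{\m e_k}$ passing to the limit by continuity. (The only nitpick is that $\|\lfloor n\m i\rfloor-m\m p\|_1$ is bounded by $\|\m p\|_1+d$ rather than $\|\m p\|_1$, which is immaterial.)
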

\begin{proof}
Let $\m{i}\in \Q^d$ and $N\in \N$ such that $N \m{i} \in \Z^d$. For all $n \in \N$ there exists $k \in \N\cup\{0\}$ and $0\leq m\leq N-1$ such that $n = kN+m$. Then
for all $x\in X_\H$
$$h_x({\m{0}}, k N\m{i})- N\|\m{i}\|_1\leq h_x({\m{0}}, \lfloor n\m{i} \rfloor)\leq h_x({\m{0}}, k N\m{i})+ N\|\m{i}\|_1$$
proving
$$sl_\mi(x)=\lim_{n\longrightarrow\infty}\frac{1}{n}h_x({\m{0}}, \lfloor n\m{ i} \rfloor) = \frac{1}{N}\lim_{k \longrightarrow \infty} \frac{1}{k}h_x({\m{0}}, k N\m{i}) = \frac{1}{N}sl_{N\m{i}}(x)$$
almost everywhere. Since $sl_{N\m{i}}$ is constant almost everywhere, we have that $sl_\mi$ is constant almost everywhere as well; denote the constant by $c_\mi$ . Also
$$sl_{\m{i}}(x)\leq \frac{1}{N}\sum _{l=1}^d|N i_l|sl_{\m{e}_l}(x)=\sum _{l=1}^d|i_l|sl_{\m{e}_l}(x).$$

Let $X\subset X_\H$ be the set of configurations $x$ such that
$$\lim_{n \longrightarrow \infty}\frac{1}{n} h_x({\m{0}}, \lfloor n \m{i}\rfloor)=c_\mi$$
for all $\mi \in \Q^d$. We have proved that $\mu(X)=1$.

Fix $x\in X$.
Let $\m i, \m{j}\in \R^d$ such that $\|\m{i}- \m{j}\|_1<\epsilon$. Then
$$\left|\frac{1}{n} h_x({\m{0}}, \lfloor n \m{i}\rfloor)-\frac{1}{n} h_x({\m{0}}, \lfloor n \m{j}\rfloor)\right|\leq\frac{1}{n}\|\lfloor n \m{i}\rfloor-\lfloor n \m{j}\rfloor\|_1\leq\epsilon+\frac{2d}{n}.$$
Thus we can approximate $\frac{1}{n} h_x({\m{0}}, \lfloor n \m{i}\rfloor)$ for $\mi \in \R^d$ by $\frac{1}{n} h_x({\m{0}}, \lfloor n \m{j}\rfloor)$ for $\mj \in \Q^d$ to prove that $\lim_{n \longrightarrow \infty}\frac{1}{n} h_x({\m{0}}, \lfloor n \m{i}\rfloor)$ exists for all $\m i \in \R^d$, is independent of $x\in X$ and satisfies
$$sl_{\m{i}} (x)\leq \sum _{k=1}^d|i_k|sl_{\m{e}_k}(x).$$
\end{proof}

The existence of slopes can be generalised from height functions to continuous sub-cocycles; the same proofs work:
\begin{prop}Let $c:X\times \Z^d \longrightarrow \R$ be a continuous sub-cocycle and $\mu$ be an ergodic measure on $X$. Then for all $\m{i}\in \R^d$
$$sl^c_{\m{i}}(x):=\lim_{n \longrightarrow \infty}\frac{1}{n} c(x, \lfloor n \m{i}\rfloor)$$
exists almost everywhere and is independent of $x$. Moreover if $\m{i}= (i_1, i_2\ldots, i_d)$ then
$$sl^c_{\m{i}}(x)\leq \sum_{k=1}^d |i_k| sl^c_{\m{e}_k}(x).$$
\end{prop}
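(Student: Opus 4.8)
The plan is to repeat, essentially verbatim, the arguments proving Proposition \ref{prop:existence_of_slopes} and Corollary \ref{corollary: existence_of _slopes_in_reality}, replacing every quantitative estimate that was particular to the height function by a uniform bound coming from the continuity of $c$ on the compact space $X$. Since each of the $2d$ functions $c(\cdot,\pm\m e_k)$ is continuous on the compact space $X$, the constant $C:=\max_{1\le k\le d}\max\big(\sup_x|c(x,\m e_k)|,\ \sup_x|c(x,-\m e_k)|\big)$ is finite, and it plays the role of the $1$-Lipschitz constant of the graph distance. Chaining the sub-cocycle inequality along a lattice path from $\m a$ to $\m b$ together with this bound (and with $0\le c(x,\m 0)$, which follows from $c(x,\m 0)\le 2c(x,\m 0)$) yields an estimate of the form $|c(x,\m a)-c(x,\m b)|\le C\|\m a-\m b\|_1+O(1)$ for all $x\in X$ and $\m a,\m b\in\Z^d$; in particular $c(x,n\m i)$ grows at most linearly in $n$.

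For fixed $\m i\in\Z^d$ I would set $f_n(x):=c(x,n\m i)$ and $T:=\sigma^{\m i}$. The sub-cocycle condition is exactly the subadditivity hypothesis $f_{n+m}\le f_n+f_m\circ T^n$ of Theorem \ref{theorem:Subadditive_ergodic_theorem}, and the bound above gives $f_1^+\in L^1(\mu)$, so the theorem produces an almost everywhere limit $f=sl^c_{\m i}$ which, by the same linear bound, is finite and bounded. To see that $f$ is shift-invariant I would prove the uniform-in-$n$ comparison $|c(\sigma^{\m j}x,n\m i)-c(x,n\m i)|\le K_{\m j}$ for a constant $K_{\m j}$ depending only on $\m j$, obtained by inserting $\pm\m j$ through the sub-cocycle inequality and bounding the resulting terms by $C$; this is the analogue of the estimate $-2\|\m j\|_1+f_n\le f_n\circ\sigma^{\m j}\le 2\|\m j\|_1+f_n$ used in Proposition \ref{prop:existence_of_slopes}. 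Dividing by $n$ and letting $n\to\infty$ gives $sl^c_{\m i}\circ\sigma^{\m j}=sl^c_{\m i}$ almost everywhere, so ergodicity of $\mu$ forces $sl^c_{\m i}$ to be almost everywhere constant.

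For the directional inequality I would decompose, with $\m{i}^{(k)}:=(i_1,\ldots,i_k,0,\ldots,0)$,
$$c(x,n\m i)\le\sum_{k=1}^d c\big(\sigma^{n\m{i}^{(k-1)}}(x),\,ni_k\m e_k\big),$$
integrate against the shift-invariant measure $\mu$ to remove the shifts, split each term $c(\cdot,ni_k\m e_k)$ into $|i_k|$ blocks of the form $c(\cdot,\pm n\m e_k)$ by subadditivity, and pass to the limit exactly as in the final display of the proof of Proposition \ref{prop:existence_of_slopes}. This reproduces $sl^c_{\m i}\le\sum_{k=1}^d|i_k|sl^c_{\m e_k}$. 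Finally the passage from $\Z^d$ to $\Q^d$ and then to $\R^d$ is word for word the argument of Corollary \ref{corollary: existence_of _slopes_in_reality}, with the estimate $|c(x,\lfloor n\m i\rfloor)-c(x,\lfloor n\m j\rfloor)|\le C\|\lfloor n\m i\rfloor-\lfloor n\m j\rfloor\|_1$ (from chaining subadditivity and the bound $C$) replacing the $1$-Lipschitz property of $h_x$ invoked there.

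The main obstacle is that a general sub-cocycle carries none of the free geometric bounds enjoyed by the height function ($h_x\ge 0$ and $h_x(\m 0,\m i)\le\|\m i\|_1$), so all the quantitative estimates must be regenerated from continuity, and crucially in a two-sided form: because $c$ need not be nonnegative, every application of the sub-cocycle inequality must be paired with a reverse application together with the boundedness of $c(\cdot,-\m e_k)$ in order to recover the lower bounds that $h_x\ge 0$ supplied automatically. The one genuinely delicate point is the appearance of the reversed directions $-\m e_k$ when $i_k<0$ in the decomposition step: for the height function the symmetry $h_{\sigma^{-n\m e_k}x}(\m 0,n\m e_k)=h_x(\m 0,-n\m e_k)$ makes $sl_{-\m e_k}=sl_{\m e_k}$ automatically, so the factor $|i_k|$ survives cleanly, whereas for a general sub-cocycle this symmetry is not intrinsic and the corresponding control over the $-\m e_k$ contributions must be extracted separately from the continuity of $c$.
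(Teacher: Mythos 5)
Your overall strategy is exactly what the paper intends: the paper offers no separate argument for this proposition beyond the remark that ``the same proofs work,'' and your replacement of the height function's free geometric bounds ($h_x\geq 0$ and the $1$-Lipschitz property) by the constant $C=\max_{1\le k\le d}\sup_{x}\max\left(|c(x,\m e_k)|,|c(x,-\m e_k)|\right)$, finite by continuity and compactness, is the right way to make that remark precise. The existence of the almost-everywhere limit for $\m i\in\Z^d$ via the subadditive ergodic theorem, the uniform comparison $|f_n\circ\sigma^{\m j}-f_n|\leq K_{\m j}$ yielding shift-invariance and hence constancy by ergodicity, and the passage $\Z^d\to\Q^d\to\R^d$ via the chained Lipschitz estimate all correctly mirror the proofs of Proposition \ref{prop:existence_of_slopes} and Corollary \ref{corollary: existence_of _slopes_in_reality}.

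The gap is at the one point you flag but do not close. For the height function, the step $\int h_x(\m 0,ni_k\m e_k)\,d\mu\leq|i_k|\int h_x(\m 0,n\m e_k)\,d\mu$ with $i_k<0$ uses the symmetry $h_x(\m 0,-n\m e_k)=h_{\sigma^{-n\m e_k}x}(\m 0,n\m e_k)$, i.e.\ $sl_{-\m e_k}=sl_{\m e_k}$. You assert that the corresponding control of the $-\m e_k$ contributions for a general sub-cocycle ``must be extracted separately from the continuity of $c$,'' but continuity cannot supply it: the map $c(x,\m i):=\max(0,-i_1)$ is a continuous (indeed constant in $x$) sub-cocycle with $sl^c_{\m e_1}=0$ and $sl^c_{-\m e_1}=1$, so the claimed inequality $sl^c_{-\m e_1}\leq|-1|\,sl^c_{\m e_1}$ fails. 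Your decomposition argument (and the displayed inequality as literally stated) is valid only when every $i_k\geq 0$; for general $\m i$ one must either replace $sl^c_{\m e_k}$ by $sl^c_{\mathrm{sgn}(i_k)\m e_k}$ on the right-hand side, or impose the symmetry hypothesis $c(x,\m i)=c(\sigma^{\m i}x,-\m i)$, which the height sub-cocycle satisfies and which is all that the intended applications require. You should say explicitly which of these repairs you are making rather than deferring the issue to an unspecified consequence of continuity.
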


Let $C_X$ be the space of continuous sub-cocycles on a shift space $X$. $C_X$ has a natural vector space structure: given $c_1, c_2\in C_X$, $(c_1 +\alpha c_2)$ is also a continuous sub-cocycle on $X$ for all $\alpha\in \R$ where addition and scalar multiplication is point-wise. The following is not hard to prove and follows directly from definition.
\begin{prop}\label{proposition: sub-cocycles under conjugacy}
Let $X, Y$ be conjugate shift spaces. Then every conjugacy $f: X \longrightarrow Y$ induces a vector-space isomorphism $ f^\star: C_Y\longrightarrow C_X$ given by
$$f^\star(c)(x, \m {i}):= c(f(x), \m{i})$$
for all $c\in C_Y$, $x\in X$ and $\m i \in \Z^d$. Moreover $sl^c_{\m i}(y)=sl^{f^\star(c)}_{\m i}(f^{-1}(y))$ for all $y\in Y$ and $\m i \in \R^d$ for which the slope $sl^c_{\m i}(y)$ exists.
\end{prop}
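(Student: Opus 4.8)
The plan is to verify the four assertions bundled in the statement directly from the definitions, since nothing beyond the defining properties of a conjugacy is required. Concretely I would check in turn that (i) $f^\star$ actually maps $C_Y$ into $C_X$, i.e.\ that $f^\star(c)$ is again a continuous sub-cocycle; (ii) $f^\star$ is linear; (iii) $f^\star$ is a bijection, with inverse $(f^{-1})^\star$; and (iv) the slope identity holds. The only place where the conjugacy hypothesis does genuine work is step (i), and even there the essential input is just shift-equivariance.

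For (i), continuity of $f^\star(c) = c \circ (f \times \mathrm{id}_{\Z^d})$ is immediate from continuity of $f$ and of $c$. For the sub-cocycle inequality I would compute, for fixed $\m i, \m j \in \Z^d$ and $x \in X$,
$$f^\star(c)(x, \m i + \m j) = c(f(x), \m i + \m j) \leq c(f(x), \m i) + c(\sigma^{\m i}(f(x)), \m j),$$
using that $c$ is a sub-cocycle on $Y$. The key observation is that a conjugacy commutes with the shift, so $\sigma^{\m i}(f(x)) = f(\sigma^{\m i}(x))$; substituting this, the right-hand side becomes $f^\star(c)(x, \m i) + f^\star(c)(\sigma^{\m i}(x), \m j)$, which is exactly the sub-cocycle condition for $f^\star(c)$ on $X$.

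Step (ii) is a one-line check: for $c_1, c_2 \in C_Y$ and $\alpha \in \R$, evaluating $f^\star(c_1 + \alpha c_2)$ at $(x, \m i)$ and expanding pointwise gives $f^\star(c_1)(x,\m i) + \alpha f^\star(c_2)(x, \m i)$. For (iii) I would use that $f^{-1}\colon Y \to X$ is itself a conjugacy (a continuous bijection between compact spaces commuting with the shift, as already noted in the excerpt), so $(f^{-1})^\star\colon C_X \to C_Y$ is defined and linear by the same reasoning; then $(f^{-1})^\star \circ f^\star$ and $f^\star \circ (f^{-1})^\star$ are seen to be the identities on $C_Y$ and $C_X$ respectively by collapsing $f \circ f^{-1}$ and $f^{-1}\circ f$ inside the evaluation, whence $f^\star$ is a vector-space isomorphism.

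Finally, for (iv), for any $y \in Y$ at which $sl^c_{\m i}(y)$ exists I would simply unwind the definitions:
$$sl^{f^\star(c)}_{\m i}(f^{-1}(y)) = \lim_{n \to \infty}\frac{1}{n}\, f^\star(c)\big(f^{-1}(y), \lfloor n \m i\rfloor\big) = \lim_{n \to \infty}\frac{1}{n}\, c\big(y, \lfloor n \m i\rfloor\big) = sl^c_{\m i}(y),$$
the middle limit existing precisely because the one defining $sl^c_{\m i}(y)$ does. I do not expect any genuine obstacle here; the only point requiring a moment's care is the shift-equivariance substitution in step (i), which is exactly what makes the sub-cocycle structure transport correctly under the conjugacy.
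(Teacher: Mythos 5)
Your proof is correct and matches the paper's approach: the paper states this proposition without proof, remarking only that it ``is not hard to prove and follows directly from definition,'' and your direct verification (with the shift-equivariance substitution $\sigma^{\m i}(f(x)) = f(\sigma^{\m i}(x))$ as the one substantive step) is exactly the intended argument.
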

\section{Proofs of the Main Theorems} \label{section: Proof of the main theorems}
\begin{proof}[Proof of Theorem \ref{theorem: MRF fully supported }] If $\H$ is a single edge, then $X_\H$ is the orbit of a periodic configuration; the result follows immediately. Suppose this is not the case. The proof follows loosely the proof of Proposition \ref{proposition: periodicfoldentropy} and morally the ideas from \cite{lightwoodschraudnerentropy}: We prove existence of two kind of configurations in $X_\H$, ones which are `poor' (Lemma \ref{lemma:slope 1 is frozen}), in the sense that they are frozen and others which are `universal' (Lemma \ref{lemma:patching_various_parts}), for which the homoclinic class is dense.

Ideas for the following proof were inspired by discussions with Anthony Quas. A similar result in a special case is contained in Lemma 6.7 of \cite{chandgotia2013Markov}.

\begin{lemma}\label{lemma:slope 1 is frozen} Let $\H$ be a connected four-cycle free graph and $\mu$ be an ergodic probability measure on $X_\H$ such that $sl_{\m e_k}(x)=1$ almost everywhere for some $1\leq k \leq d$. Then $\mu$ is frozen and $h_\mu=0$.
\end{lemma}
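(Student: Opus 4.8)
The plan is to exploit the fact that $sl_{\m e_k}\le \|\m e_k\|_1=1$ (Proposition \ref{prop:existence_of_slopes}), so the hypothesis $sl_{\m e_k}=1$ means the slope is \emph{extremal}, and to translate this extremality into the statement that, on $supp(\mu)$, every line in the $\m e_k$-direction lifts to a \emph{non-backtracking} walk in the tree $E_\H$. Both conclusions will then be consequences of the rigidity of geodesics in trees: a non-backtracking walk in a tree is simple, hence a geodesic, and is the unique walk of its length between its endpoints.

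First I would localise the backtracking defect. Set $B:=\{x\in X_\H: x_{-\m e_k}=x_{\m e_k}\}$, a clopen set; since $\pi$ restricts to an isomorphism on the neighbours of $\tilde x_{\m 0}$, membership in $B$ is exactly the event that the lifted walk $m\mapsto\tilde x_{m\m e_k}$ backtracks at $\m 0$, and then $h_x(2m\m e_k,(2m+2)\m e_k)=2\bigl(1-\mathbf 1_{B}(\sigma^{(2m+1)\m e_k}x)\bigr)$. Subadditivity of $h_x$ gives $h_x(\m 0,2n\m e_k)\le\sum_{m=0}^{n-1}h_x(2m\m e_k,(2m+2)\m e_k)$; integrating against $\mu$, using shift-invariance and the fact (from the subadditive ergodic theorem, cf. Proposition \ref{prop:existence_of_slopes}) that $\frac{1}{2n}\int h_x(\m 0,2n\m e_k)\,d\mu\to sl_{\m e_k}=1$, I would get $1\le 1-\mu(B)$, hence $\mu(B)=0$. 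By shift-invariance $\mu\bigl(\bigcup_{\m j}\sigma^{-\m j}B\bigr)=0$, so the closed set $G:=\{x:\tilde x_{\m j-\m e_k}\neq\tilde x_{\m j+\m e_k}\ \text{for all }\m j\}$ has full measure and therefore $supp(\mu)\subseteq G$; that is, every configuration in the support is backtrack-free along $\m e_k$.

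To see that $G$ is frozen, take $x,y\in supp(\mu)$ differing on a finite set $F\neq\emptyset$, pick $\m p\in F$ and $M$ with $F\cap(\m p+\Z\m e_k)\subseteq\{\m p+m\m e_k:|m|<M\}$. Since $d\ge 2$, the set $\Z^d\setminus F$ is connected, so fixing the lifts to agree at a single point of $\Z^d\setminus F$ and applying Corollary \ref{corollary:covering_space_lifting_homoclinic} forces $\tilde x=\tilde y$ on all of $\Z^d\setminus F$, in particular at the flanking sites $\m p\pm M\m e_k$. The restrictions of $\tilde x$ and $\tilde y$ to $\m p+\Z\m e_k$ are non-backtracking walks agreeing at $\m p\pm M\m e_k$, so by uniqueness of geodesics they coincide on the whole segment, giving $\tilde x_{\m p}=\tilde y_{\m p}$ and $x_{\m p}=y_{\m p}$, contradicting $\m p\in F$. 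Hence $supp(\mu)$ is frozen. For $h_\mu=0$ I would upgrade this to global structure: for $j\neq k$ the lines $\m t+\Z\m e_k$ and $\m t+\m e_j+\Z\m e_k$ lift to bi-infinite geodesics whose $m$-th vertices are adjacent, hence at distance exactly $1$ for every $m$; in a tree two geodesics at bounded pointwise distance must share both ends, and two ends determine a unique bi-infinite geodesic as a set. Propagating this across the connected hyperplane $\{i_k=0\}$, I conclude that for each $x\in supp(\mu)$ there is a single geodesic $\Gamma:\Z\to E_\H$ with $\tilde x_{\m i}=\Gamma(\phi(\m i))$, where $\phi:\Z^d\to\Z$ is a height function with $\phi(\m i+\m e_k)=\phi(\m i)+1$. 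Thus $x_{\m i}=(\pi\circ\Gamma)(\phi(\m i))$ is determined by the non-backtracking walk $\pi\circ\Gamma$ in $\H$ together with $\phi|_{\{i_k=0\}}$. Counting on $B_N$: the relevant segment of $\pi\circ\Gamma$ has length $O(N)$ by the bound $Range_{B_N}(x)=O(N)$ (cf. \eqref{equation:diameter_bounds_height}), so at most $2^{O(N)}$ choices since $\H$ has bounded degree, while $\phi|_{B_N}$ is determined by its values on the face $\{i_k=-N\}\cap B_N$, giving at most $2^{O(N^{d-1})}$ choices. Hence $|\L_{B_N}(supp(\mu))|\le 2^{O(N^{d-1})}$ and $h_\mu\le h_{top}(supp(\mu))=0$.

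I expect the main obstacle to be the passage from ``zero density of backtracks'' to the rigid global structure used for the entropy bound — namely the claim that adjacent $\m e_k$-geodesics share the same underlying geodesic set (the ``same ends'' argument in $E_\H$, which relies on distances between geodesics converging to distinct ends blowing up) and that this collapses each configuration to a single height function $\phi$ composed with a fixed geodesic $\Gamma$. The frozen conclusion and the measure-zero estimate $\mu(B)=0$ are comparatively routine once the subadditive comparison $h_x(2m\m e_k,(2m+2)\m e_k)=2(1-\mathbf 1_B)$ is set up correctly; it is the tree-geometry rigidity, not the ergodic-theoretic input, that does the real work in forcing $h_\mu=0$.
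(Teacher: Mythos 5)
Your proposal is correct, and its first half coincides with the paper's argument in substance: the paper also combines subadditivity along the $\m e_k$-axis with the (subadditive) ergodic theorem to conclude $\int \frac{1}{k}h_x(\m 0, k\m e_k)\,d\mu = 1$, hence $h_x(\m i, \m i + k\m e_k)=k$ almost everywhere — which is exactly your statement that $\mu(B)=0$, i.e.\ that $\m e_k$-lines lift to geodesics of $E_\H$ — and then invokes uniqueness of geodesics between fixed endpoints in a tree. Where you diverge is in how the two conclusions are extracted. The paper runs the geodesic-uniqueness argument once, at the level of \emph{patterns}: for $a,b\in \L_{B_n\cup\partial_2 B_n}(supp(\mu))$ with $a|_{\partial_2 B_n}=b|_{\partial_2 B_n}$, lifting along the (connected) set $\partial_2 B_n$ and using that the lifted $\m e_1$-segments crossing $B_n$ are geodesics of the exact length $2n+2$ forces $a|_{B_n}=b|_{B_n}$. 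This single statement yields frozenness \emph{and} the bound $|\L_{B_n}(supp(\mu))|\leq |\A|^{|\partial_2 B_n|}=2^{O(n^{d-1})}$, so $h_\mu=0$ follows with no further work. You instead prove frozenness at the level of configurations (essentially the same idea, with $\Z^d\setminus F$ playing the role of the boundary annulus) and then, for the entropy, build a genuinely stronger structure theorem: adjacent lifted lines are geodesics at pointwise distance $1$, hence share both ends and coincide as sets, so each $x\in supp(\mu)$ is $\pi\circ\Gamma\circ\phi$ for a single bi-infinite geodesic $\Gamma$ and a height function $\phi$ with $\phi(\m i+\m e_k)=\phi(\m i)+1$. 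That argument is sound (the "bounded distance implies shared ends" step is genuine tree rigidity and the orientation-reversal case is excluded by boundedness), and it buys a sharper description of $supp(\mu)$ than the paper records; but it is not needed for the lemma — if you run your frozenness argument with $\partial_2 B_n$ in place of $\Z^d\setminus F$, the interior-determined-by-boundary count already gives $h_{top}(supp(\mu))=0$ directly.
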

\begin{proof}
Without loss of generality assume that $sl_{\m e_1}(x)=1$ almost everywhere. By the subadditivity of the height function for all $k, n \in \N$ and $x\in X_\H$ we know that
$$\frac{1}{kn}h_x(\m 0, kn\m{e}_1) \leq \frac{1}{kn}\sum_{m=0}^{n-1}h_x(km\m{e}_1, k(m+1)\m{e}_1)=\frac{1}{n}\sum_{m=0}^{n-1}\frac{1}{k}h_{\sigma^{km \m e_1} (x)}(\m 0, k\m{e}_1) \leq 1.$$
Since $sl_{\m{e}_1}(x)= 1$ almost everywhere, we get that
$$\lim_{n\longrightarrow \infty} \frac{1}{n}\sum_{m=0}^{n-1}\frac{1}{k}h_{\sigma^{km \m e_1} (x)}(\m 0, k\m{e}_1)=1$$
almost everywhere. By the ergodic theorem
$$\int \frac{1}{k}h_{x}(\m 0, k\m{e}_1) d \mu= 1.$$
Therefore $h_{x}(\m 0, k\m{e}_1)=k$ almost everywhere which implies that
\begin{equation}
h_x(\m i, \m i+k\m{e}_1)=k \label{eq:slopeoneheightconstantrise}
\end{equation}•
for all $\m i \in \Z^d$ and $k \in \N$ almost everywhere. Let $X\subset supp(\mu)$ denote the set of such configurations.

For some $n \in \N$ consider two patterns $a,b \in \L_{B_n\cup \partial_2 B_n}(supp(\mu))$ such that $a|_{\partial_2 B_n}= b|_{\partial_2 B_n}$. We will prove that then $a|_{B_n}= b|_{B_n}$. This will prove that $\mu$ is frozen, and $|\L_{B_n}(supp(\mu))|\leq|\L_{\partial_2 B_n}(supp(\mu))|\leq |\A|^{|\partial_2 B_n|}$ implying that $h_{top}(supp(\mu))=0$. By the variational principle this implies that $h_\mu=0$.

Consider $x, y \in X$ such that $x|_{B_n\cup \partial_2 B_n}= a$ and $y|_{B_n\cup \partial_2 B_n}= b$. Noting that $\partial_2 B_n$ is connected, by Corollary \ref{corollary:covering_space_lifting_homoclinic} we can choose lifts $\t x, \t y\in X_{E_\H}$ such that $\t x|_{\partial_2 B_n}= \t y|_{\partial_2 B_n}$. Consider any $\m i \in B_n$ and choose $k\in - \N$ such that $\m i + k \m e_1, \m i + (2n+2+k)\m e_1 \in \partial B_n$. Then by Equation \ref{eq:slopeoneheightconstantrise} $d_{E_\H}(\t x_{\m i + k \m e_1}, \t x_{\m i + (2n+2+k) \m e_1})= 2n+2$. But
$$(\t x_{\m i + k \m e_1},\t x_{\m i + (k+1)\m e_1}, \ldots,\t x_{\m i + (2n+2+k) \m e_1} )\text{ and }$$
$$(\t y_{\m i + k \m e_1},\t y_{\m i + (k+1)\m e_1}, \ldots,\t y_{\m i + (2n+2+k) \m e_1} )$$
are walks of length $2n+2$ from $\t x_{\m i + k \m e_1}$ to $\t x_{\m i + (2n+2+k) \m e_1}$. Since $E_\H$ is a tree and the walks are of minimal length, they must be the same. Thus $\t x|_{B_n}=\t y|_{B_n}$. Taking the image under the map $\pi$ we derive that
$$a|_{B_n}=x|_{B_n}=y|_{B_n}= b|_{B_n}.$$
\end{proof}

This partially justifies the claim that steep slopes lead to greater `rigidity'. We are left to analyse the case where the slope is submaximal in every direction. As in the proof of Proposition 7.1 in \cite{chandgotia2013Markov} we will now prove a certain mixing result for the shift space $X_\H$.

\begin{lemma}\label{lemma:patching_various_parts} Let $\H$ be a connected four-cycle free graph and $|\H|= r$. Consider any $x\in X_\H$ and some $y \in X_\H$ satisfying $Range_{\partial D_{(d+1)n+3r+k}}(y)\leq 2k$ for some $n \in \N$. Then
\begin{enumerate}
\item\label{case:not_bipartite}
If either $\H$ is not bipartite or $x_{\m 0}, y_{\m 0}$ are in the same partite class of $\H$ then there exists $z\in X_\H$ such that
$$z_{\m i}=
\begin{cases}
x_{\m i}& \ if \ \m i \in D_n\\
y_{\m i} &\ if \ \m i \in D_{(d+1)n+3r+k}^c.
\end{cases}$$
\item \label{case:bipartite}
If $\H$ is bipartite and $x_{\m 0}, y_{\m 0}$ are in different partite classes of $\H$ then there exists $z\in X_\H$ such that
$$z_{\m i}=
\begin{cases}
x_{\m i+\m e_1} &if \ \m i \in D_n\\
y_{\m i} & if \ \m i \in D_{(d+1)n+3r+k}^c.
\end{cases}$$
\end{enumerate}•
\end{lemma}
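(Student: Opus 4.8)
\textbf{The plan} is to use the ``onion peeling'' philosophy of Proposition~\ref{proposition: periodicfoldentropy}, but with the height functions replacing the stiff-shift structure. The hypothesis $Range_{\partial D_{(d+1)n+3r+k}}(y)\le 2k$ says that the lift $\tilde y$ has small diameter on the sphere $\partial D_{(d+1)n+3r+k}$; this is exactly the condition that should let us ``freeze'' $y$ near its boundary sphere and interpolate to $x$ on the inside. First I would lift both configurations: fix lifts $\tilde x, \tilde y\in X_{E_\H}$. The core idea is that because $E_\H$ is a tree, a configuration whose image on a sphere $\partial D_m$ has diameter $2k$ in the tree metric is confined (on $\partial D_m$ and everything it controls) to a subtree of radius $k$. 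Since subtrees of $E_\H$ are themselves finite trees, and $X_T$ for a finite tree $T$ config-folds down to a single edge (Section~\ref{section:Folding, Entropy Minimality and the Pivot Property}), the outer annular region of $y$ can be retracted onto a small subtree and patched to $x$.

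\medskip

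Concretely, the key steps in order are as follows. \emph{Step 1 (localize $y$).} Using the $Range$ bound I would show that $\tilde y$ restricted to $D^c_{(d+1)n+3r+k}$ (or a suitable annulus near its inner boundary) takes values in a subtree $T\subset E_\H$ of diameter at most $2k$, hence $T$ is a finite tree with diameter bounded by $2k$. \emph{Step 2 (build room for $x$).} The term $(d+1)n$ in the radius is the travel budget: starting from a vertex in $D_n$, any path to $\partial D_{(d+1)n}$ in the $\ell^1$ geometry has length at least $dn$ in the worst corner-to-corner direction, so the height $h$ can grow by at most a controlled amount, and I would use this to guarantee that the image subtree containing $\tilde x|_{D_n}$ and the subtree $T$ from Step~1 both sit inside a common finite subtree $T'\subset E_\H$ whose radius is small enough to be foldable. \emph{Step 3 (fold the annulus).} Apply the outward fixing map / config-folding machinery of Proposition~\ref{prop: folding_ to _ stiffness_fixing_a_set} to $X_{T'}$, fixing the pattern on $D_n$ (resp.\ on $D_n$ shifted by $\m e_1$ in the bipartite case) and folding the intermediate annulus $D_{(d+1)n+3r+k}\setminus D_n$ down so that the boundary values of the folded $x$-part and the $y$-part agree on the separating sphere. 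Since $X_\H$ is a nearest neighbour shift of finite type, matching the values on $\partial D_m$ for a single separating sphere $\partial D_m$ certifies that the glued configuration $z$ lies in $X_\H$.

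\medskip

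The \emph{bipartite case} (item~\ref{case:bipartite}) is where the shift by $\m e_1$ enters: a graph homomorphism into a bipartite $\H$ must respect the two-coloring of $\Z^d$, so if $x_{\m 0}$ and $y_{\m 0}$ lie in different partite classes of $\H$ there is a parity obstruction to gluing $x$ directly to $y$ across an annulus. Translating $x$ by $\m e_1$ flips the partite class assignment of $\Z^d$ and removes the obstruction; I would check that this translation does not spoil the $D_n$-localization (it only enlarges the relevant ball by one, absorbed by the $3r$ slack) and that after translation the parities match on the separating sphere. The constant $3r$ (three times the number of vertices of $\H$, hence bounding the diameter of $\H$ and controlling foldability radii and the parity fix) is the slack that makes all the ball-size bookkeeping close up.

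\medskip

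\textbf{The main obstacle} I anticipate is Step~2: verifying that the subtree carrying $\tilde x|_{D_n}$ and the boundary subtree $T$ of $\tilde y$ can be placed inside one common foldable subtree $T'$ of controlled radius, \emph{and} that the folding retraction of $X_{T'}$ (which is a nearest neighbour shift of finite type strictly larger than the relevant part of $X_\H$) genuinely produces matching boundary data. The delicate point is that the height function is only \emph{subadditive}, not additive, so I cannot simply add up slopes to predict where $\tilde x$ lands; instead I must use only the crude bound \eqref{equation:diameter_bounds_height}, $Range_{D_n}(x)\le 2n$, together with the tree structure to force the two image subtrees into a common small tree. Making the radius bookkeeping (the coefficient $d+1$ and the additive $3r+k$) tight enough that $T'$ is small while still leaving enough annular width to fold is the crux, and the fact that folding a finite tree takes $\lfloor diameter/2\rfloor$ full config-folds is what pins down these constants.
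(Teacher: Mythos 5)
Your overall skeleton (lift both configurations, use the range bound to trap $\tilde y$ near its boundary sphere inside a subtree of diameter at most $2k$, fold annular regions via the config-folding machinery, and shift by $\m e_1$ to fix the parity obstruction in the bipartite case) matches the paper's, but two load-bearing constructions are missing, and they are exactly the points you flag as ``the main obstacle'' without resolving them. First, on the $x$-side: the image of $\tilde x$ on the annulus $D_{(d+1)n+3r+k}\setminus D_n$ is not controlled by anything --- outside $D_n$ the lift can wander arbitrarily far in the infinite tree $E_\H$ --- so there is no common finite subtree $T'$ containing the relevant data, and one cannot fold relative to $X_{E_\H}$ itself since its fold-radius is infinite. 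The paper's fix is to discard $\tilde x$ outside $B_n$ entirely: reflect $\tilde x|_{B_n}$ to a pattern on $[-n,3n]^d$ and tile $\Z^d$ periodically with it, producing $\tilde x^1\in X_{E_\H}$ with $\tilde x^1|_{B_n}=\tilde x|_{B_n}$ and with image equal to the image of $\tilde x|_{B_n}$, a finite tree of diameter at most $2dn$. Only then can the inward fixing map for that finite tree be applied, and its fold-radius bound $dn$ is precisely what produces the coefficient $d+1$.

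Second, and more seriously, your Step 3 asserts that folding makes ``the boundary values of the folded $x$-part and the $y$-part agree on the separating sphere,'' but nothing forces this: the two sides fold to checkerboards on two unrelated edges, i.e.\ to two arbitrary vertices $w_1$ and $v_1$ of $\H$ on their respective monochromatic spheres $\partial D_{(d+1)n-1}$ and $\partial D_{(d+1)n+3r}$. The agreement has to be manufactured, and this is where the non-bipartiteness (or same-partite-class) hypothesis actually enters the argument: the paper constructs an explicit walk $(w_1=u_1,u_2,\dots,u_{3r+2}=v_1)$ of length exactly $3r+1$ in $\H$, by concatenating paths of length at most $r-1$ through a vertex of an odd cycle and padding with back-and-forth steps $(v_1,w',v_1)$ to achieve the right length and parity, and then defines $z$ to equal $u_i$ on the entire sphere $\partial D_{(d+1)n+i-2}$. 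Gluing two monochromatic spheres across an annulus of constant spheres labelled by a walk in $\H$ is the actual patching mechanism; matching values on a single separating sphere never occurs and is not needed. Without these two constructions the argument does not close up.
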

The separation $dn+3r+k$ between the induced patterns of $x$ and $y$ is not optimal, but sufficient for our purposes.
\begin{proof} We will construct the configuration $z$ only in the case when $\H$ is not bipartite. The construction in the other cases is similar; the differences will be pointed out in the course of the proof.
\begin{enumerate}
\item
\textbf{Boundary patterns with non-maximal range to monochromatic patterns inside.}
Let $\t y$ be a lift of $y$ and $\T^\prime$ be the image of $\t y|_{ D_{(d+1)n+3r+k+1}}$. Let $\T$ be a minimal subtree of $E_\H$ such that
$$Image(\t y|_{\partial D_{(d+1)n+3r+k}})\subset \T\subset \T^\prime.$$
Since $Range_{\partial D_{(d+1)n+3r+k}}(y)\leq 2k$, $diameter(\T)\leq 2k$. By Proposition \ref{proposition:folding trees into other trees} there exists a graph homomorphism $f:\T^\prime \longrightarrow \T$ such that $f|_\T$ is the identity. Consider the configuration $\t y^1$ given by
$$\t y^1_{\m i}= \begin{cases}
f(\t y_{\m i}) &\text{ if }\m i \in D_{(d+1)n+3r+k+1}\\
\t y_{\m i} &\text{ otherwise.}
\end{cases}•$$
The pattern
$$\t y^1|_{D_{(d+1)n+3r+k+1}}\in \L_{D_{(d+1)n+3r+k+1}}(X_{\T})\subset \L_{D_{(d+1)n+3r+k+1}}(X_{E_\H}).$$
Moreover since $f|_\T$ is the identity map,
$$\t y^1|_{D_{(d+1)n+3r+k}^c}=\t y|_{D_{(d+1)n+3r+k}^c}\in \L_{D_{(d+1)n+3r+k}^c}(X_{E_\H}).$$
Since $X_{E_\H}$ is given by nearest neighbour constraints $\t y^1\in X_{E_\H}$.

Recall that the fold-radius of a nearest neighbour shift of finite type (in our case $X_\T$) is the total number of full config-folds required to obtain a stiff shift. Since $diameter(\T)\leq 2k$ the fold-radius of $X_{\T}\leq k$. Let a stiff shift obtained by a sequence of config-folds starting at $X_{\T}$ be denoted by $Z$. Since $\T$ folds into a graph consisting of a single edge, $Z$ consists of two checkerboard patterns in the vertices of an edge in $\T$, say $\t v_1$ and $\t v_2$. Corresponding to such a sequence of full config-folds, we had defined in Section \ref{section:Folding, Entropy Minimality and the Pivot Property} the outward fixing map $O_{X_\T, (d+1)n+3r+k}$. By Proposition \ref{prop: folding_ to _ stiffness_fixing_a_set} the configuration $O_{X_\T,(d+1)n+3r+k}(\t y^1)\in X_{E_{\H}}$ satisfies
\begin{eqnarray*}
O_{X_\T,(d+1)n+3r+k}(\t y^1)|_{D_{(d+1)n+3r+1}}\in \L_{D_{(d+1)n+3r+1}}(Z) \\
O_{X_\T,(d+1)n+3r+k}(\t y^1)|_{D_{(d+1)n+3r+k}^c}=\t y^1|_{D_{(d+1)n+3r+k}^c}=\t y|_{D_{(d+1)n+3r+k}^c}.
\end{eqnarray*}
\noindent Note that the pattern $O_{X_\T,(d+1)n+3r+k}(\t y^1)|_{\partial D_{(d+1)n+3r}}$ uses a single symbol, say $\t v_1$. Let $\pi (\t v_1)= v_1$. Then the configuration $y^\prime= \pi(O_{X_\T,(d+1)n+3r+k}(\t y^1))\in X_\H$ satisfies
\begin{eqnarray*}
y^\prime|_{\partial D_{(d+1)n+3r}} &=& v_1\\
y^\prime|_{D_{(d+1)n+3r+k}^c}&=&y|_{D_{(d+1)n+3r+k}^c}.
\end{eqnarray*}•
\item
\textbf{Constant extension of an admissible pattern.}
Consider some lift $\t x$ of $x$. We begin by extending $\t x|_{B_n}$ to a periodic configuration $\t x^1\in X_{E_\H}$. Consider the map $f: [-n, 3n]\longrightarrow [-n, n]$ given by
\begin{equation*}
f(k)=\begin{cases}
k &\text{ if } k \in [-n,n]\\
2n-k &\text{ if }k \in [n,3n].
\end{cases}•
\end{equation*}
\noindent Then we can construct the pattern $\t a\in \L_{[-n, 3n]^d}(X_{E_\H})$ given by
$$\t a_{i_1, i_2, \ldots i_d}= \t x_{f(i_1), f(i_2), \ldots, f(i_d)}.$$
Given $k, l \in [-n, 3n]$ if $|k-l|=1$ then $|f(k)-f(l)|=1$. Thus $\t a$ is a locally allowed pattern in $X_{E_\H}$. Moreover since $f(-n)= f(3n)$ the pattern $\t a$ is `periodic', meaning,
$$\t a_{i_1, i_2,\ldots, i_{k-1}, -n, i_{k+1}, \ldots, i_d }= \t a_{i_1, i_2, \ldots, i_{k-1}, 3n, i_{k+1}, \ldots, i_d }$$
for all $i_1, i_2, \ldots, i_d \in [-n,3n]$. Also $\t a|_{B_n}=\t x|_{B_n}$. Then the configuration $\t x^1$ obtained by tiling $\Z^d$ with $\t a|_{[-n,3n-1]^d}$, that is,
$$\t x^1_{\m i}= \t a_{(i_1\!\!\!\mod 4n,\ i_2\!\!\!\mod 4n,\ \ldots,\ i_d\!\!\!\mod 4n)-(n, n, \ldots, n)}\text{ for all }\m i \in \Z^d$$
is an element of $X_{E_\H}$. Moreover $\t x^1|_{B_n}= \t a|_{B_n}= \t x|_{B_n}$ and $Image(\t x^1)= Image(\t x|_{B_n})$. Since $diameter(B_n)=2dn$, $diameter(Image(\t x^1))\leq 2dn$. Let $\t \T= Image(\t x^1)$. Then the fold-radius of $X_{\t \T}$ is less than or equal to $dn$. Let a stiff shift obtained by a sequence of config-folds starting at $X_{\t \T}$ be denoted by $Z'$. Since $\t \T$ folds into a graph consisting of a single edge, $Z^\prime$ consists of two checkerboard patterns in the vertices of an edge in $\t T$, say $\t w_1$ and $\t w_2$. Then by Proposition \ref{prop: folding_ to _ stiffness_fixing_a_set}
\begin{eqnarray*}
I_{X_{\t\T},n}(\t x^1)|_{D_n}= \t x^1|_{D_n}= \t x|_{D_n}\\
I_{X_{\t\T},n}(\t x^1)|_{D_{(d+1)n-1}^c} \in \L_{D_{(d+1)n-1}^c}(Z^\prime).
\end{eqnarray*}
\noindent We note that $I_{X_{\t\T},n}(\t x^1)|_{\partial D_{(d+1)n-1}}$ consists of a single symbol, say $\t w_1$. Let $\pi(\t w_1)= w_1$. Then the configuration $x^\prime=\pi(I_{X_{\t\T},n}(\t x^1)) \in X_{\H}$ satisfies
\begin{eqnarray*}
x^\prime|_{D_n}= x|_{D_n}\text{ and}\\
x^\prime|_{\partial D_{(d+1)n-1}}=w_1.
\end{eqnarray*}•
\item \textbf{Patching of an arbitrary pattern inside a configuration with non-maximal range.}
We will first prove that there exists a walk on $\H$ from $w_1$ to $v_1$, $((w_1= u_1), u_2, \ldots, (u_{3r+2}= v_1))$.
Since the graph is not bipartite, it has a cycle $p_1$ such that $|p_1|\leq r-1$ and is odd. Let $v^\prime$ be a vertex in $p_1$. Then there exist walks $p_2$ and $p_3$ from $w_1$ to $v^\prime$ and from $v^\prime$ to $v_1$ respectively such that $|p_2|, |p_3|\leq r-1$. Consider any vertex $w^\prime\sim_\H v_1$. If
$3r+1-|p_2|- |p_3|$ is even then the walk
$$p_2\star p_3 (\star (v_1, w^\prime, v_1))^{\frac{3r+1-|p_2|- |p_3|}{2}}$$
and if not, then the walk
$$p_2\star p_1 \star p_3 (\star (v_1, w^\prime, v_1))^{\frac{3r+1-|p_1|-|p_2|- |p_3|}{2}}$$
is a walk of length $3r+1$ in $\H$ from $w_1$ to $v_1$. This is the only place where we use the fact that $\H$ is not bipartite. If it were bipartite, then we would require that $x^\prime_{\m 0}$ and $y^\prime_{\m 0}$ have to be in the same partite class to construct such a walk.

Given such a walk the configuration $z$ given by
\begin{eqnarray*}
z|_{D_{(d+1)n}}&=& x^\prime|_{D_{(d+1)n}}\\
z|_{D^c_{(d+1)n +3r}}&= &y^\prime|_{D^c_{(d+1)n +3r}}\\
z|_{\partial D_{(d+1)n+i-2}}&=& u_i\text{ for all } 1\leq i \leq 3r+2
\end{eqnarray*}
\noindent is an element of $X_\H$ for which $z|_{D_n}=x^\prime|_{D_n}=x|_{D_n}$ and $z|_{D_{(d+1)n+3r+k}^c}=y^\prime|_{D_{(d+1)n+3r+k}^c}=y|_{D_{(d+1)n+3r+k}^c}.$
\end{enumerate}•

\end{proof}

We now return to the proof of Theorem \ref{theorem: MRF fully supported }. Let $\mu$ be an ergodic probability measure adapted to $X_\H$ with positive entropy.

Suppose $sl_{\m e_i}(x)= \theta_i$ almost everywhere. By Lemma \ref{lemma:slope 1 is frozen}, $\theta_i<1$ for all $1\leq i \leq d$. Let $\theta= \max_i \theta_i$ and $0<\epsilon<\frac{1}{4}\left(1- \theta\right)$. Denote by $S^{d-1}$, the sphere of radius $1$ in $\R^d$ for the $l^1$ norm. By Corollary \ref{corollary: existence_of _slopes_in_reality} for all $\m{v}\in S^{d-1}$
$$\lim_{n\longrightarrow \infty }\frac{1}{n}h_x({\vec{0}}, \lfloor n \m{v}\rfloor) \leq \theta$$
almost everywhere. Since $S^{d-1}$ is compact in $\R^d$ we can choose a finite set $\{\m{v}_1, \m{v}_2, \ldots, \m{v}_t\} \subset S^{d-1}$ such that for all $\m{v}\in S^{d-1}$ there exists some $1\leq i\leq t$ satisfying $\|\m{v}_i -\m v\|_1<\epsilon$. By Egoroff's theorem \cite{Follandreal1999} given $\epsilon$ as above there exists $N_0\in \N$ such that for all $n\geq N_0$ and $1\leq i \leq t$
\begin{equation}
\mu(\{x\in X_\H\:|\:h_x({\vec{0}}, \lfloor n \m{v}_i\rfloor)\leq n\theta + n\epsilon\ for\ all\ 1\leq i\leq t\}) >1-\epsilon.\label{equation:uniform_continuity_of_heights}
\end{equation}•
Let $\m{v} \in \partial D_{n-1}$ and $1\leq i_0\leq t$ such that
$\|\frac{1}{n}\m{v}-\m{v}_{i_0}\|_1<\epsilon$. If for some $x\in X_\H$ and $n \in \N$
$$h_x({\vec{0}}, \lfloor n \m{v}_{i_0}\rfloor)\leq n\theta + n\epsilon$$
then
$$h_x({\vec{0}}, \lfloor \m{v}\rfloor)\leq h_x({\vec{0}}, \lfloor n \m{v}_{i_0}\rfloor) +\lceil n \epsilon \rceil \leq n\theta + 2n\epsilon+1.$$
By Inequality \ref{equation:uniform_continuity_of_heights} we get
$$\mu\left(\{x\in X_\H\:|\: h_x\left({\vec{0}}, \lfloor \m{v}\rfloor\right)\leq n\theta + 2n\epsilon+1\ for\ all\ \m{v}\in \partial D_{n-1}\}\right) >1-\epsilon$$
for all $n\geq N_0$. Therefore for all $n\geq N_0$ there exists $x^{(n)}\in supp(\mu) $ such that
$$Range_{\partial D_{n-1}}\left(x^{(n)}\right)\leq 2n\theta + 4 n \epsilon +2< 2n(1- \epsilon)+2.$$
Let $x \in X_\H$ and $n_0\in \N$. It is sufficient to prove that $\mu([x]_{D_{n_0-1}})>0$. Suppose $r:=|\H|$. Choose $k \in \N$ such that
\begin{eqnarray*}
n_0(d+1)+3r+k+1&\geq&N_0\\
2\left(n_0(d+1)+3r+k+1\right)(1-\epsilon)+2&\leq&2k.
\end{eqnarray*}•
Then by Lemma \ref{lemma:patching_various_parts} there exists $z\in X_\H$ such that either
\begin{equation*}
z_\mj=
\begin{cases}
x_\mj \quad\quad\quad\quad \quad\quad\: &if \ \mj \in D_{n_0}\\
x^{\left(n_0(d+1)+3r+k+1\right)}_\mj \ &if \ \mj \in D_{n_0(d+1)+3r+k}^c
\end{cases}
\end{equation*}•
or
\begin{equation*}
z_\mj=
\begin{cases}
x_{\mj +\m e_1}\quad\quad\quad\quad \quad\quad \:& if \ \mj \in D_{n_0}\\
x^{\left(n_0(d+1)+3r+k+1\right)}_\mj \ &if \ \mj \in D_{n_0(d+1)+3r+k}^c.
\end{cases}
\end{equation*}•
In either case $(z, x^{\left(n_0(d+1)+3r+k+1\right)})\in \Delta_{X_\H}$. Since $\mu$ is adapted to $X_\H$, $z\in supp(\mu)$. In the first case we get that $\mu([x]_{D_{n_0-1}})=\mu([z]_{D_{n_0-1}})>0$. In the second case we get that $$\mu([x]_{D_{n_0-1}})=\mu(\sigma^{\m e_1}([x]_{D_{n_0-1}}))=\mu([z]_{D_{(n_0-1)}-\m e_1})>0.$$
This completes the proof.

\end{proof}

Every shift space conjugate to an entropy minimal shift space is entropy minimal. However a shift space $X$ which is conjugate to $X_\H$ for $\H$ which is connected and four-cycle free need not even be a hom-shift. By following the proof carefully it is possible to extract a condition for entropy minimality which is conjugacy-invariant:

\begin{thm}\label{theorem:conjugacy_invariant_entropy minimality condition}
Let $X$ be a shift of finite type and $c$ a continuous sub-cocycle on $X$ with the property that $c(\cdot, \mi)\leq \|\mi\|_1$ for all $\mi \in \Z^d$. Suppose every ergodic probability measure $\mu$ adapted to $X$ satisfies:
\begin{enumerate}
\item
If $sl^c_{\m e_i}(x)=1$ almost everywhere for some $1\leq i \leq d$ then $h_\mu< h_{top}(X)$.
\item
If $sl^c_{\m e_i}(x)<1$ almost everywhere for all $1\leq i \leq d$ then $supp(\mu)=X$.
\end{enumerate}
then $X$ is entropy minimal.
\end{thm}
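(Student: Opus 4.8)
The plan is to reduce the statement to the support characterisation of entropy minimality and then exploit the slope dichotomy encoded in hypotheses (1) and (2). By Proposition \ref{proposition:entropyviamme} it suffices to show that every measure of maximal entropy on $X$ is fully supported. Since any shift of finite type is conjugate to a nearest neighbour one and, by Proposition \ref{proposition: sub-cocycles under conjugacy}, both the bound $c(\cdot, \mi)\leq \|\mi\|_1$ and the slope conditions transfer along a conjugacy (the slope $sl^c_{\mi}$ being a conjugacy invariant, and the pulled-back sub-cocycle still satisfying the $\|\mi\|_1$ bound), I would first recode so that $X$ may be assumed to be a nearest neighbour shift of finite type; entropy minimality and topological entropy are themselves conjugacy invariants, so proving the conclusion in this case suffices.

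Next I would fix a measure of maximal entropy $\mu$. By the variational principle $h_\mu = h_{top}(X)$, and by the Lanford--Ruelle theorem (Theorem \ref{thm:equiGibbs}) $\mu$ is a shift-invariant uniform Markov random field adapted to $X$. I would then pass to ergodic components using Theorem \ref{theorem: ergodic decomposition of markov random fields}: writing $\mu = \int_X \mu_x \, d\mu$, almost every $\mu_x$ is an ergodic shift-invariant uniform Markov random field adapted to $X$ with $supp(\mu_x)\subset supp(\mu)$, and $\int h_{\mu_x}\, d\mu = h_\mu = h_{top}(X)$. Because $h_{\mu_x}\leq h_{top}(X)$ for every $x$ by the variational principle, this integral identity forces $h_{\mu_x} = h_{top}(X)$ for $\mu$-almost every $x$; in particular almost every ergodic component is itself a positive-entropy ergodic measure adapted to $X$ that attains the topological entropy.

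Now I would invoke the slope dichotomy on such a component $\nu = \mu_x$. By the existence-of-slopes result for continuous sub-cocycles (the sub-cocycle analogue of Proposition \ref{prop:existence_of_slopes}), each $sl^c_{\m e_i}(\cdot)$ exists $\nu$-almost everywhere and equals an almost everywhere constant; moreover $sl^c_{\m e_i}\leq 1$ since $c(\cdot, n\m e_i)\leq \|n\m e_i\|_1 = n$. Hence for each such $\nu$ exactly one alternative holds: either $sl^c_{\m e_i}=1$ almost everywhere for some $i$, or $sl^c_{\m e_i}<1$ almost everywhere for all $i$. The first is impossible, since hypothesis (1) would give $h_\nu < h_{top}(X)$, contradicting $h_\nu = h_{top}(X)$. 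Therefore the second alternative holds, and hypothesis (2) yields $supp(\nu)=X$. As $supp(\nu)=supp(\mu_x)\subset supp(\mu)\subset X$, we conclude $supp(\mu)=X$, so every measure of maximal entropy is fully supported and $X$ is entropy minimal.

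I expect the genuine content to lie in the two hypotheses, so the remaining difficulty is bookkeeping rather than a real obstacle. The points requiring care are verifying that the reduction to a nearest neighbour shift of finite type simultaneously preserves the sub-cocycle bound and both slope hypotheses (so that Lanford--Ruelle and the defining assumptions can be used in tandem), and confirming that almost every ergodic component inherits both adaptedness and maximality of entropy, so that the dichotomy may be applied to it legitimately. Beyond these, the argument is a soft synthesis of the variational principle, the Lanford--Ruelle theorem, ergodic decomposition, and the slope dichotomy.
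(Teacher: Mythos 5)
Your proposal is correct and follows essentially the same route as the paper's (much briefer) sketch: reduce via Proposition \ref{proposition:entropyviamme}, the Lanford--Ruelle theorem and ergodic decomposition to showing each ergodic measure of maximal entropy is a positive-entropy ergodic adapted measure, then apply the slope dichotomy, with hypothesis (1) ruled out by maximality of entropy and hypothesis (2) giving full support. Your explicit handling of the recoding to a nearest neighbour shift of finite type and of the inheritance of adaptedness and maximal entropy by ergodic components fills in details the paper leaves implicit, but the argument is the same.
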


Here is a sketch: By Proposition \ref{proposition:entropyviamme} and Theorems \ref{thm:equiGibbs}, \ref{theorem: ergodic decomposition of markov random fields} it is sufficient to prove that every ergodic measure of maximal entropy is fully supported. If $X$ is a shift of finite type satisfying the hypothesis of Theorem \ref{theorem:conjugacy_invariant_entropy minimality condition} then it is entropy minimal because every ergodic measure of maximal entropy of $X$ is an ergodic probability measure adapted to $X$; its entropy is either smaller than $h_{top}(X)$ or it is fully supported. To see why the condition is conjugacy invariant suppose that $f:X\longrightarrow Y$ is a conjugacy and $c\in C_Y$ satisfies the hypothesis of the theorem. Then by Proposition \ref{proposition: sub-cocycles under conjugacy} it follows that ${f^\star}(c)\in C_X$ satisfies the hypothesis as well.

\begin{proof}[Proof of Theorem \ref{theorem: pivot property for four cycle free}] By Proposition \ref{proposition: pivot for disconnected} we can assume that $\H$ is connected. Consider some $(x, y)\in \Delta_{X_\H}$. By Corollary \ref{corollary:covering_space_lifting_homoclinic} there exist $(\t x, \t y)\in \Delta_{X_{E_\H}}$ such that $\pi(\t x)= x$ and $\pi(\t y)=y$. It is sufficient to prove that there is a chain of pivots from $\t x$ to $\t y$. We will proceed by induction on $\sum_{\mi \in \Z^d} d_{E_\H}(\t x_{\mi}, \t y_\mi)$. The induction hypothesis (on $M$) is : If $\sum_{\mi \in \Z^d} d_{E_\H}(\t x_\mi, \t y_\mi)= 2M$ then there exists a chain of pivots from $\t x$ to $\t y$.

We note that $d_{E_\H}(\t x_\mi, \t y_\mi)$ is even for all $\mi \in \Z^d$ since there exists $\mi^\p\in \Z^d$ such that $\t x_{\mi^\p}= \t y_{\mi^{\p}}$ and hence $\t x_\mi$ and $\t y_\mi$ are in the same partite class of $E_\H$ for all $\mi \in \Z^d$.

The base case $(M=1)$ occurs exactly when $\t x$ and $\t y$ differ at a single site; there is nothing to prove in this case. Assume the hypothesis for some $M\in \N$.

Consider $(\t x, \t y)\in \Delta_{X_{E_\H}}$ such that
$$\sum_{\mi \in \Z^d} d_{E_\H}(\t x_\mi, \t y_\mi)=2M+2.$$
Let
$$B=\{\mj \in \Z^d\:|\: \t x_\mj \neq \t y_\mj\}$$
and a vertex $\t v\in E_\H$. Without loss of generality we can assume that
\begin{equation}
\max_{\mi \in B} d_{E_\H}(\t v, \t x_\mi)\geq \max_{\mi \in B} d_{E_\H}(\t v, \t y_\mi).\label{equation:assumption_for_pivot}
\end{equation}•
Consider some $\mi_0 \in B$ such that
$$d_{E_\H}(\t v, \t x_{\mi_0})= \max_{\mi \in B}d_{E_\H}(\t v, \t x_{\mi}).$$
Consider the shortest walks $(\t v= \t v_1, \t v_2, \ldots, \t v_n=\t x_{\mi_0})$ from $\t v$ to $\t x_{\mi_0}$ and $(\t v= \t v^\p_1, \t v^\p_2, \ldots, \t v^\p_{n^\p}=\t y_{\mi_0})$ from $\t v$ to $\t y_{\mi_0}$. By Assumption \ref{equation:assumption_for_pivot}, $n^\p\leq n$. Since these are the shortest walks on a tree, if $\t v^\p_k=\t v_{k^\p}$ for some $1\leq k\leq n^\p$ and $1\leq k^{\p} \leq n$ then $k =k^{\p}$ and $\t v_l = \t v_l^\p$ for $1\leq l \leq k$. Let
$$k_0= \max\{1\leq k\leq n^\p\:|\: \t v_k^\p= \t v_k\}.$$
Then the shortest walk from $\t x_{\mi_0}$ to $\t y_{\mi_0}$ is given by $\t x_{\mi_0}=\t v_n, \t v_{n-1}, \t v_{n-2}, \ldots, \t v_{k_0}, \t v^\p_{k_0+1}, \ldots, \t v^\p_{n^\p}= \t y_{\mi_0}$.

We will prove for all $\m i \sim \m i_{0}$, $\t x_{\m i}= \t v_{n-1}$. This is sufficient to complete the proof since then the configuration
$$\t x^{(1)}_{\mj}=
\begin{cases}
\t x_\mj \quad\ \:if\ \mj \neq \mi_0\\
\t v_{n-2}\ \ if\ \mj = \mi_0,
\end{cases}$$
\noindent is an element of $X_{E_\H}$, $(\t x,\t x^{(1)})$ is a pivot and
$$n+n^\p -2 k_0 -2=d_{E_\H}{(\t x^{(1)}_{\mi_0}, \t y_{\mi_0})}< d_{E_\H}(\t x_{\mi_0}, \t y_{\mi_0})=n+n^\p -2 k_0$$
\noindent giving us a pair $(\t x^{(1)}, \t y)$ such that
$$\sum_{\mi \in \Z^d} d_{E_\H}(\t x^{(1)}_\mi, \t y_\mi)=\sum_{\mi \in \Z^d} d_{E_\H}(\t x_\mi, \t y_\mi)-2= 2M$$

to which the induction hypothesis applies. There are two possible cases:
\begin{enumerate}
\item
$\mi \in B$: Then $d_{E_\H}(\t v, \t x_{\m i})=d_{E_\H}(\t v, \t x_{\m i_0})-1$ and $\t x_\mi\sim_{E_\H} \t x_{\m i_0}$. Since $E_\H$ is a tree, $\t x_\mi= \t v_{n-1}$.

\item $\mi \notin B$: Then $\t x_\mi= \t y_\mi$ and we get that $d_{E_\H}(\t x_{\mi_0}, \t y_{\mi_0})=2$. Since $\t x_\mi\sim_{E_\H} \t x_{\mi_0}$, the shortest walk joining $\t v$ and $\t x_{\mi}$ must either be $\t v= \t v_1, \t v_2, \ldots, \t v_{n-1}= \t x_{\mi}$ or $\t v= \t v_1, \t v_2,\ldots,\t v_{n}= \t x_{\mi_0}, \t v_{n+1}= \t x_{\mi}$. We want to prove that the former is true. Suppose not.

Since $\t y_{\mi_0}\sim_{E_\H} \t x_{\mi}$ and $\mi_0\in B$, the shortest walk from $\t v$ to $\t y_{\mi_0}$ is $\t v= \t v_1, \t v_2,\ldots,\t v_{n}= \t x_{\mi_0}, \t v_{n+1}= \t x_{\mi}, \t v_{n+2}=\t y_{\mi_0} $. This contradicts Assumption \ref{equation:assumption_for_pivot} and completes the proof.

\end{enumerate}

\end{proof}

\section{Further Directions}
\subsection{Getting Rid of the Four-Cycle Free Condition}

In the context of the results in this paper, the four-cycle free condition seems a priori artificial; we feel that in many cases it is a mere artifact of the proof. To the author, getting rid of this condition is an important and interesting topic for future research. Here we will illustrate what goes wrong when we try to apply our proofs for the simplest possible example with four-cycles, that is, $C_4$.

We have shown (Example \ref{Example: Folds to an edge}) that $X_{C_4}$ satisfies the hypothesis of Propositions \ref{proposition: periodicfoldentropy} and \ref{proposition: frozenfoldpivot} and thus it also satisfies the conclusions of Theorems \ref{theorem: MRF fully supported } and \ref{theorem: pivot property for four cycle free}. The proofs of Theorems \ref{theorem: MRF fully supported } and \ref{theorem: pivot property for four cycle free} however rely critically on the existence of lifts to the universal cover, that is, Proposition \ref{proposition:covering_space_lifting}. However the conclusion of this proposition does not hold for $X_{C_4}$: The universal cover of $C_4$ is $\Z$ and the corresponding covering map $\pi: \Z \longrightarrow C_4$ is given by $\pi(i)= i \mod 4$. By the second remark following Theorem 4.1 in \cite{chandgotia2013Markov} it follows that the induced map $\pi: X_{\Z}\longrightarrow X_{C_4}$ is not surjective disproving the conclusion of Proposition \ref{proposition:covering_space_lifting} for $X_{C_4}$.

\subsection{Identification of Hom-Shifts}
\noindent\textbf{Question 1:} Given a shift space $X$, are there some nice decidable conditions which imply that $X$ is conjugate to a hom-shift?

Being conjugate to a hom-shift lays many restrictions on the shift space, for instance on its periodic configurations. Consider a conjugacy $f:X\longrightarrow X_\H$ where $\H$ is a finite undirected graph. Let $Z\subset X_\H$ be the set of configurations invariant under $\{\sigma^{2\m e_i}\}_{i=1}^d$. Then there is a bijection between $Z$ and $\L_A(X_\H)$ where $A$ is the rectangular shape
$$A:=\{\sum_{i=1}^{d}\delta_i \m e_i\:|\: \delta_i\in \{0,1\}\}$$
because every pattern in $\L_A(X_\H)$ extends to a unique configuration in $Z$. More generally given a graph $\H$ it is not hard to compute the number of periodic configurations for a specific finite-index subgroup of $\Z^d$. Moreover periodic points are dense in these shift spaces and there are algorithms to compute approximating upper and lower bounds of their entropy \cite{symmtricfriedlan1997,louidor2010improved}. Hence the same then has to hold for the shift space $X$ as well. We are not familiar with nice decidable conditions which imply that a shift space is conjugate to a hom-shift.

\subsection{Hom-Shifts and Strong Irreducibility}\label{subsection: homSI}

\noindent\textbf{Question 2:} Which hom-shifts are strongly irreducible?

We know two such conditions:
\begin{enumerate}
\item\cite{brightwell2000gibbs}
If $\H$ is a finite graph which folds into $\H^\prime$ then $X_\H$ is strongly irreducible if and only if $X_{\H'}$ is strongly irreducible. This reduces the problem to graphs $\H$ which are stiff. For instance if $\H$ is dismantlable, then $X_\H$ is strongly irreducible.
\item\cite{Raimundo2014}
$X_\H$ is single site fillable. A shift space $X_\F\subset \A^{\Z^d}$ is said to be \emph{single site fillable} if for all patterns $a\in \A^{\partial\{\m 0\}}$ there exists a locally allowed pattern in $X_\F$, $b\in \A^{D_1}$ such that
$b|_{\partial\{\m 0\}}=a$. In case $X_\F= X_\H$ for some graph $\H$ then it is single site fillable if and only if given vertices $v_1, v_2, \ldots, v_{2d}\in \H$ there exists a vertex $v\in \H$ adjacent to all of them.
\end{enumerate}
It follows that $X_{K_5}$ is single site fillable and hence strongly irreducible for $d=2$. In fact strong irreducibility has been proved in \cite {Raimundo2014} for shifts of finite type with a weaker mixing condition called TSSM. This does not cover all possible examples. For instance it was proved in \cite{Raimundo2014} that $X_{K_4}$ is strongly irreducible for $d=2$ even though it is not TSSM and $K_4$ is stiff. We do not know if it is possible to verify whether a given hom-shift is TSSM.

\subsection{Hom-Shifts and Entropy Minimality}\textbf{Question 3:} Given a finite connected graph $\H$ when is $X_\H$ entropy minimal?

We have provided some examples in the paper:
\begin{enumerate}
\item
$\H$ can be folded to a single vertex with a loop or a single edge. (Proposition \ref{proposition: periodicfoldentropy})
\item $\H$ is four-cycle free. (Theorem \ref{theorem:four cycle free entropy minimal})
\end{enumerate}•
Again this does not provide the full picture. For instance $X_{K_4}$ is strongly irreducible when $d=2$ and hence entropy minimal even though $K_4$ is stiff and not four-cycle free.
A possible approach might be via identifying the right sub-cocycle and Theorem \ref{theorem:conjugacy_invariant_entropy minimality condition}.

\noindent\textbf{Conjecture:} Let $d=2$ and $\H$ be a finite connected graph. Then $X_\H$ is entropy minimal.

\subsection{Hom-Shifts and the Pivot Property}\label{subsection: Hom-shifts and the pivot property}
We have given a list of examples of graphs $\H$ for which the shift space $X_\H$ has the pivot property in Section \ref{section: the pivot property}. In this paper we have provided two further sets of examples:
\begin{enumerate}
\item
$\H$ can be folded to a single vertex with a loop or a single edge. (Proposition \ref{proposition: frozenfoldpivot})
\item
$\H$ is four-cycle free. (Theorem \ref{theorem: pivot property for four cycle free})
\end{enumerate}

We saw in Section \ref{section: the pivot property} that $X_{K_4}, X_{K_5}$ do not have the pivot property when $d=2$. However they do satisfy a weaker property which we will describe next.

A shift space $X$ is said to have the \emph{generalised pivot property} if there is an $r\in \N$ such that for all $(x,y)\in \Delta_X$ there exists a chain $x^1=x, x^2, x^3, \ldots, y=x^n\in X$ such that $x^i$ and $x^{i+1}$ differ at most on some translate of $D_r$.

It can be shown that any nearest neighbour shift of finite type $X\subset \A^\Z$ has the generalised pivot property. In higher dimensions this is not true without any hypothesis; look for instance in Section 9 in \cite{chandgotia2013Markov}. It is not hard to prove that any single site fillable nearest neighbour shift of finite type has the generalised pivot property. This can be generalised further: in \cite{Raimundo2014} it is proven that every shift space satisfying TSSM has the generalised pivot property.

\noindent\textbf{Question 4:} For which graphs $\H$ does $X_\H$ satisfy the pivot property? What about the generalised pivot property?

\section{Acknowledgments}
I would like to thank my advisor, Prof. Brian Marcus for dedicated reading of a million versions of this paper, numerous suggestions, insightful discussions and many other things. The line of thought in this paper was begot in discussions with Prof. Tom Meyerovitch, his suggestions and remarks have been very valuable to me. I will also like to thank Prof. Ronnie Pavlov, Prof. Sam Lightwood, Prof. Michael Schraudner, Prof. Anthony Quas, Prof. Klaus Schmidt, Prof. Mahan Mj, Prof. Peter Winkler and Raimundo Brice\~no for giving a patient ear to my ideas and many useful suggestions. Lastly, I will like to thank Prof. Jishnu Biswas; he had introduced me to universal covers, more generally to the wonderful world of algebraic topology. This research was partly funded by the Four-Year Fellowship at the University of British Columbia. Lastly I would like to thank the anonymous referee for giving many helpful comments and corrections largely improving the quality of the paper.

\bibliographystyle{abbrv}
\bibliography{4Cfree}

\begin{thebibliography}{10}

\bibitem{Angluin80}
D.~Angluin.
\newblock Local and global properties in networks of processors (extended
  abstract).
\newblock In {\em Proceedings of the 12th Annual {ACM} Symposium on Theory of
  Computing, April 28-30, 1980, Los Angeles, California, {USA}}, pages 82--93,
  1980.

\bibitem{boyle2010multidimensional}
M.~Boyle, R.~Pavlov, and M.~Schraudner.
\newblock Multidimensional sofic shifts without separation and their factors.
\newblock {\em Trans. Amer. Math. Soc.}, 362(9):4617--4653, 2010.

\bibitem{Raimundo2014}
R.~Brice{\~n}o.
\newblock The topological strong spatial mixing property and new conditions for
  pressure approximation.
\newblock http://arxiv.org/abs/1411.2289, 2014.

\bibitem{brightwell2000gibbs}
G.~R. Brightwell and P.~Winkler.
\newblock Gibbs measures and dismantlable graphs.
\newblock {\em J. Combin. Theory Ser. B}, 78(1):141--166, 2000.

\bibitem{silviofekete}
S.~Capobianco.
\newblock Multidimensional cellular automata and generalization of {F}ekete's
  lemma.
\newblock {\em Discrete Math. Theor. Comput. Sci.}, 10(3):95--104, 2008.

\bibitem{chandgotiahammcliff2014}
N.~Chandgotia.
\newblock Generalisation of the {H}ammersley-{C}lifford theorem on bipartite
  graphs.
\newblock http://arxiv.org/abs/1406.1849, 2014.

\bibitem{chandgotia2013Markov}
N.~Chandgotia and T.~Meyerovitch.
\newblock Markov random fields, {M}arkov cocycles and the 3-colored chessboard.
\newblock http://arxiv.org/abs/1305.0808, 2013.

\bibitem{covensmital}
E.~M. Coven and J.~Sm{\'{\i}}tal.
\newblock Entropy-minimality.
\newblock {\em Acta Math. Univ. Comenian. (N.S.)}, 62(1):117--121, 1993.

\bibitem{Follandreal1999}
G.~B. Folland.
\newblock {\em Real analysis}.
\newblock Pure and Applied Mathematics (New York). John Wiley \& Sons, Inc.,
  New York, second edition, 1999.
\newblock Modern techniques and their applications, A Wiley-Interscience
  Publication.

\bibitem{symmtricfriedlan1997}
S.~Friedland.
\newblock On the entropy of {$\bold Z^d$} subshifts of finite type.
\newblock {\em Linear Algebra Appl.}, 252:199--220, 1997.

\bibitem{Georgii}
H.-O. Georgii.
\newblock {\em Gibbs measures and phase transitions}, volume~9 of {\em de
  Gruyter Studies in Mathematics}.
\newblock Walter de Gruyter \& Co., Berlin, 1988.

\bibitem{Hammersleyfirst1965}
J.~M. Hammersley and D.~J.~A. Welsh.
\newblock First-passage percolation, subadditive processes, stochastic
  networks, and generalized renewal theory.
\newblock In {\em Proc. {I}nternat. {R}es. {S}emin., {S}tatist. {L}ab., {U}niv.
  {C}alifornia, {B}erkeley, {C}alif}, pages 61--110. Springer-Verlag, New York,
  1965.

\bibitem{kellerequ1998}
G.~Keller.
\newblock {\em Equilibrium states in ergodic theory}, volume~42 of {\em London
  Mathematical Society Student Texts}.
\newblock Cambridge University Press, Cambridge, 1998.

\bibitem{lanfruell}
O.~E. Lanford, III and D.~Ruelle.
\newblock Observables at infinity and states with short range correlations in
  statistical mechanics.
\newblock {\em Comm. Math. Phys.}, 13:194--215, 1969.

\bibitem{LM}
D.~Lind and B.~Marcus.
\newblock {\em An introduction to symbolic dynamics and coding}.
\newblock Cambridge University Press, 1995, reprinted 1999.

\bibitem{louidor2010improved}
E.~Louidor and B.~H. Marcus.
\newblock Improved lower bounds on capacities of symmetric 2{D} constraints
  using {R}ayleigh quotients.
\newblock {\em IEEE Trans. Inform. Theory}, 56(4):1624--1639, 2010.

\bibitem{Masseyanintroduction1977}
W.~S. Massey.
\newblock {\em Algebraic topology: an introduction}.
\newblock Springer-Verlag, New York-Heidelberg, 1977.
\newblock Reprint of the 1967 edition, Graduate Texts in Mathematics, Vol. 56.

\bibitem{meestersteif2001}
R.~Meester and J.~E. Steif.
\newblock Higher-dimensional subshifts of finite type, factor maps and measures
  of maximal entropy.
\newblock {\em Pacific J. Math.}, 200(2):497--510, 2001.

\bibitem{MunkresTopology75}
J.~R. Munkres.
\newblock {\em Topology: a first course}.
\newblock Prentice-Hall, Inc., Englewood Cliffs, N.J., 1975.

\bibitem{dominolocal2010}
F.~Nakano, H.~Ono, and T.~Sadahiro.
\newblock Local move connectedness of domino tilings with diagonal impurities.
\newblock {\em Discrete Math.}, 310(13-14):1918--1931, 2010.

\bibitem{nowwinkler}
R.~Nowakowski and P.~Winkler.
\newblock Vertex-to-vertex pursuit in a graph.
\newblock {\em Discrete Math.}, 43(2-3):235--239, 1983.

\bibitem{ordentlich2014data}
E.~Ordentlich and R.~M. Roth.
\newblock When data must satisfy constraints upon writing.
\newblock 2014.

\bibitem{pavlovhardsquare2012}
R.~Pavlov.
\newblock Approximating the hard square entropy constant with probabilistic
  methods.
\newblock {\em Ann. Probab.}, 40(6):2362--2399, 2012.

\bibitem{peled2010high}
R.~Peled.
\newblock High-dimensional {L}ipschitz functions are typically flat.
\newblock http://arxiv.org/abs/1005.4636, 2010.

\bibitem{petersen_schmidt1997}
K.~Petersen and K.~Schmidt.
\newblock Symmetric {G}ibbs measures.
\newblock {\em Trans. Amer. Math. Soc.}, 349(7):2775--2811, 1997.

\bibitem{quastrow2000}
A.~N. Quas and P.~B. Trow.
\newblock Subshifts of multi-dimensional shifts of finite type.
\newblock {\em Ergodic Theory Dynam. Systems}, 20(3):859--874, 2000.

\bibitem{Robinson1971}
R.~M. Robinson.
\newblock Undecidability and nonperiodicity for tilings of the plane.
\newblock {\em Invent. Math.}, 12:177--209, 1971.

\bibitem{Rue}
D.~Ruelle.
\newblock {\em Thermodynamic formalism}.
\newblock Cambridge Mathematical Library. Cambridge University Press,
  Cambridge, second edition, 2004.
\newblock The mathematical structures of equilibrium statistical mechanics.

\bibitem{schmidt_cohomology_SFT_1995}
K.~Schmidt.
\newblock The cohomology of higher-dimensional shifts of finite type.
\newblock {\em Pacific J. Math.}, 170(1):237--269, 1995.

\bibitem{schmidt_invaraint_cocycles_1997}
K.~Schmidt.
\newblock Invariant cocycles, random tilings and the super-{$K$} and strong
  {M}arkov properties.
\newblock {\em Trans. Amer. Math. Soc.}, 349(7):2813--2825, 1997.

\bibitem{schimdt_fund_cocycle_98}
K.~Schmidt.
\newblock Tilings, fundamental cocycles and fundamental groups of symbolic
  {${\Bbb Z}^d$}-actions.
\newblock {\em Ergodic Theory Dynam. Systems}, 18(6):1473--1525, 1998.

\bibitem{Schraudner2010minimal}
M.~Schraudner.
\newblock Projectional entropy and the electrical wire shift.
\newblock {\em Discrete Contin. Dyn. Syst.}, 26(1):333--346, 2010.

\bibitem{lightwoodschraudnerentropy}
M.~Schraudner and S.~Lightwood.
\newblock Entropy minimality of {Z}d shifts of finite type and the family of
  wire shifts.
\newblock Work in progress.

\bibitem{Shiffieldribbon2002}
S.~Sheffield.
\newblock Ribbon tilings and multidimensional height functions.
\newblock {\em Trans. Amer. Math. Soc.}, 354(12):4789--4813 (electronic), 2002.

\bibitem{Stallingsgraph1983}
J.~R. Stallings.
\newblock Topology of finite graphs.
\newblock {\em Invent. Math.}, 71(3):551--565, 1983.

\bibitem{ThurstontilinggroupAMM}
W.~P. Thurston.
\newblock Conway's tiling groups.
\newblock {\em Amer. Math. Monthly}, 97(8):757--773, 1990.

\bibitem{walters-book}
P.~Walters.
\newblock {\em An introduction to ergodic theory}, volume~79 of {\em Graduate
  Texts in Mathematics}.
\newblock Springer-Verlag, New York-Berlin, 1982.

\bibitem{Marcinfourcyclefree2014}
M.~Wrochna.
\newblock Homomorphism reconfiguration via homotopy.
\newblock http://arxiv.org/abs/1408.2812, 2014.

\end{thebibliography}
\end{document}